\documentclass[11pt,twoside]{article}
\usepackage[margin=1.3in]{geometry}

\usepackage[T1]{fontenc}      

\usepackage{amsfonts,amsmath,amssymb,amsthm}
\usepackage{cases,color,curves}

\usepackage{cite}

\usepackage{bm,enumerate,graphicx,ifthen,latexsym,
            listings,makeidx,mathrsfs,psfrag,times,xcolor}

\usepackage[colorlinks,citecolor=green,linkcolor=red]{hyperref}
\usepackage[title]{appendix}

\def\MR#1{}   

\makeindex

\newtheorem{theorem}{Theorem}[section]
\newtheorem{lemma}[theorem]{Lemma}

\newtheorem{remark}{Remark}[section]

\begin{document}

\title{\bf On the structure of isolated singularities for semilinear elliptic equations}

\author{%
  Meiqing Xu  \and  Hui Yang
}

\date{\today}
\maketitle

\begin{abstract}
In this paper, we study isolated singularities of the following semilinear elliptic equation
\[
 -\Delta u+\frac12 x\cdot \nabla u+\frac{1}{q-1}u-u^q=0 ~~~~~~ \textmd{in } \Omega \setminus \{0\},
\]
where $n\ge 3$, $\Omega \subset \mathbb{R}^n$ is a domain, $0 \in \Omega$ and $q>1$. This equation arises in the study of blow-up profiles of semilinear heat equations. For $\frac{n}{n-2}< q < \frac{n+2}{n-2}$, we establish a complete classification of isolated singularities for nonnegative solutions and characterize the precise asymptotic behavior of singular solutions. Our results improve those of Guedda and  Kirane (Trans. Amer. Math. Soc., 1995: 3595-3603), where analogous results were obtained only for radially symmetric positive solutions. In addition, we also derive the asymptotic behavior of solutions in the Serrin critical case $q=\frac{n}{n-2}$ and the supercritical case $q>\frac{n+2}{n-2}$.

\medskip

\noindent{\it Keywords}\/: Asymptotic behavior, isolated singularities, positive solutions, semilinear elliptic equation

\medskip

\noindent{\it  MSC(2020)}\/: 35A21; 35B40; 35J61; 35K58
\end{abstract}


\section{Introduction}
In this paper, we study the asymptotic behavior near isolated singularities of nonnegative solutions to the following elliptic equation
\begin{equation}\label{eq:elliptic}
    -\Delta u+\frac12 x\cdot \nabla u+\frac{1}{q-1}u-u^q=0 ~~~~~~ \textmd{in } \Omega \setminus \{0\},
\end{equation}
where $\Omega \subset \mathbb{R}^n$ is a domain, $0 \in \Omega$ and $q>1$. Equation \eqref{eq:elliptic} is the stationary elliptic equation associated with
the similarity variables for the semilinear heat equation
\begin{equation}\label{eq:heat}
    v_t=\Delta v+|v|^{q-1}v.
\end{equation}
Indeed, near a possible blow-up point $(a, T)$, set
\[
    y:=\frac{x-a}{\sqrt{T-t}},\qquad  s:=-\ln(T-t),
\]
and
\[
    w(y, s):=(T-t)^{\frac{1}{q-1}} v(x, t).
\]
Then $w$ satisfies the semilinear parabolic equation
\begin{equation*}
        w_s=\Delta w-\frac12 y\cdot\nabla w
    -\frac{1}{q-1}w+|w|^{q-1}w.
\end{equation*}
Thus, self-similar blow-up profiles are governed by the stationary solutions of this rescaled equation. In the case of positive solutions, it coincides with equation \eqref{eq:elliptic}.

A fundamental problem for semilinear heat equations is to understand the asymptotic behavior of solutions near the singularity, particularly the determination of blow-up profiles. The classical work of Giga and Kohn \cite{MR784476} showed that the type I blow-up of
\eqref{eq:heat} is asymptotically self-similar in the subcritical and critical ranges. A key ingredient of their approach is the classification of bounded entire solutions of equation \eqref{eq:elliptic}. More precisely, they proved that for $n \leq 2$ or for $q \le\frac{n+2}{n-2}$, the only bounded global solutions of the equation
\begin{equation}\label{eq:elliptic002}
-\Delta w+\frac12 x\cdot\nabla w+\frac{1}{q-1}w-|w|^{q-1}w=0  \qquad \text{in } \mathbb R^n
\end{equation}
are $w \equiv 0$ and $w \equiv \pm (\frac{1}{q-1})^{\frac{1}{q-1}}$. In subsequent works \cite{MR876989,giga1989nondegeneracy}, Giga and Kohn investigated the nondegeneracy of these blow-up profiles in the subcritical regime. Filippas and Kohn \cite{FilippasKohnCPAM} employed a center manifold analysis to derive refined asymptotics for these profiles and to relate them to the local geometry of the blow-up set. The stability of the blow-up profile was studied by Merle and Zaag \cite{MerleZaagDuke}. In the supercritical case, the situation becomes considerably more delicate, since the steady-state equation \eqref{eq:elliptic002} admits positive radially symmetric solutions \cite{BuddQiJDE,TroySIAM,Lepin,BiernatBizo,CRSmem}. This leads to the emergence of new type I blow-up phenomena (see, e.g., \cite{CRSmem,MatanoMerleJFA}). Furthermore, for $q \geq q_{JL}$ (the Joseph-Lundgren exponent), new type II blow-up solutions also appear (see, e.g., \cite{HerreroVelzquez94,CollotAPDE,MizoguchiMathAnn,SekiJFA18}). A recent breakthrough by Collot, Merle and Rapha\"{e}l \cite{CollotJAMS20} established the existence of strongly anisotropic type II blow-up at an isolated point in this context. For more results on blow-up of semilinear heat equations, we refer to \cite{MR2060042,CdeMJEMS20,deMWZPisa,MerleZaagJEMS26,MatanoMerleCPAM,VelzquezTAMS93,ZaagDuke06} and the references therein.

The structure of type II blow-up solutions of \eqref{eq:heat} in \cite{CollotAPDE,MizoguchiMathAnn,SekiJFA18} is deeply related to the singular solution of \eqref{eq:elliptic002} given by
\[
\Phi(x) := \lambda_{n,q} |x|^{-\frac{2}{q-1}}, ~~~~~~\lambda_{n,q}:=\left[ \frac{2}{q-1} \left( n-2-\frac{2}{q-1} \right) \right]^{\frac{1}{q-1}}.
\]
Consequently, the singular solutions of the steady-state equation \eqref{eq:elliptic002} are of significant importance for understanding the blow-up phenomena of semilinear heat equations. This paper is devoted to the study of isolated singularities of \eqref{eq:elliptic}, which also lays the foundation for the classification of singular solutions of \eqref{eq:elliptic002}. For $1< q < \frac{n}{n-2}$, Guedda and Kirane \cite{gueddaKirane1995} classified the isolated singularities of positive solutions to \eqref{eq:elliptic}, thereby extending earlier results of Brezis-Lions \cite{BrezisLions81} and Lions \cite{lions1980isolated} for the Lane-Emden equation. When $\frac{n}{n-2} < q < \frac{n+2}{n-2}$, they proved that if $u$ is a positive {\it radial} solution of \eqref{eq:elliptic} (assuming that $\Omega=B_1$), then
\begin{enumerate}
    \item [(1)] $\lim\limits_{r\to0}r^{\frac{2}{q-1}}u(r)=\ell$ and $\ell \in\{0, \lambda_{n,q}\}$, where $\lambda_{n,q} :=\left[ \frac{2}{q-1} \left(n-2-\frac{2}{q-1}\right) \right]^{\frac{1}{q-1}}$.

    \item [(2)] when $\ell=0$ and $q>2$, the singularity is removable, i.e., $u$ can be extended to $\Omega$ as a $C^2$ solution of \eqref{eq:elliptic} in $\Omega$.
    \end{enumerate}
Our first result removes both the radial symmetry assumption and the condition $q>2$ in the above result of Guedda and Kirane \cite{gueddaKirane1995}.
Without loss of generality, we take $\Omega=B_1$ to be the unit ball centered at the origin, and consider
\begin{equation}\label{eq:main}
    -\Delta u+\frac12 x\cdot \nabla u+\frac{1}{q-1}u-u^q=0
    \qquad \text{in } B_1\setminus\{0\},
\end{equation}
where $B_1 \setminus\{0\} \subset \mathbb{R}^n$ is the punctured unit ball, $q >1$ and $n\geq 3$. Then we have the following theorem.

\begin{theorem}\label{thm:isolated-singularity}
Let $\frac{n}{n-2}<q<\frac{n+2}{n-2}$ and $u\in C^2(B_1\setminus\{0\})$ be a nonnegative solution of \eqref{eq:main}.
Then either $u$ can be extended as a $C^2(B_1)$ solution of \eqref{eq:main} in $B_1$, or $u$ is a distributional solution in $B_1$ and satisfies
\begin{equation*}
    \lim_{|x|\to 0}|x|^{{\frac{2}{q-1}}}u(x)= \left[ \frac{2}{q-1} \left(n-2-\frac{2}{q-1}\right) \right]^{\frac{1}{q-1}}.
\end{equation*}
\end{theorem}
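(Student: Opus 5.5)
The approach follows the Gidas--Spruck / Caffarelli--Gidas--Spruck strategy for the Lane--Emden equation $-\Delta u=u^q$, treating the drift and zero-order terms as lower-order perturbations near the origin. There are four steps: an a priori bound, rescaling to blow-up limits, a radial-symmetry or Pohozaev argument to pin down the limit, and the removable case.

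\textbf{Step 1: universal upper bound.} I first show that
\[
u(x)\le C|x|^{-\frac{2}{q-1}}\quad\text{and}\quad |\nabla u(x)|\le C|x|^{-\frac{2}{q-1}-1}\quad\text{near } 0.
\]
This comes from the doubling lemma of Pol\'a\v{c}ik--Quittner--Souplet, arguing by contradiction. If the bound failed, I would rescale around points $x_k\to0$ with $M_k=u(x_k)^{(q-1)/2}\gg |x_k|^{-1}$ and set $v_k(y)=M_k^{-2/(q-1)}u(x_k+y/M_k)$. The rescaled equation is
\[
-\Delta v_k+\tfrac{1}{2M_k}(x_k+y/M_k)\cdot\nabla v_k\,M_k^{-1}+\tfrac{1}{(q-1)M_k^2}v_k=v_k^q .
\]
The drift and linear terms vanish in the limit, because $|x_k|/M_k\to0$. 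The limit is a bounded positive entire solution of $-\Delta v=v^q$ with $v(0)=1$, which contradicts Gidas--Spruck since $q<\frac{n+2}{n-2}$. Interior estimates then give the gradient bound.

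\textbf{Step 2: blow-down limits.} Next I pass to the logarithmic cylindrical variables $t=-\ln|x|$, $\theta=x/|x|$, and set $w(t,\theta)=|x|^{2/(q-1)}u(x)$. Then $w$ is bounded with bounded derivatives, and satisfies
\[
w_{tt}-a\,w_t+\Delta_\theta w-\lambda_{n,q}^{q-1}w+w^q=e^{-2t}(\dots)\,,
\]
where $a=n-2-\frac{4}{q-1}$ and the right-hand side is an error term, exponentially small in $t$, coming from the terms $\frac12x\cdot\nabla u$ and $\frac{1}{q-1}u$. Note that $a\neq0$ because $q\ne\frac{n+2}{n-2}$.

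I then apply the Caffarelli--Gidas--Spruck moving-sphere/moving-plane argument, or alternatively the Chen--Lin approach, to show that $u$ is asymptotically radial:
\[
u(x)=\bar u(|x|)\,(1+O(|x|^\alpha)).
\]
The perturbation terms are handled as in Chen--Lin's treatment of $-\Delta u=K(x)u^q$; here they are of order $|x|^2$ relative to the main terms. I expect this to be the main obstacle, because the moving-plane argument must absorb the non-radial drift term. Failing that, I would use a Pohozaev-type monotone quantity in $t$.

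The Pohozaev route works as follows. Multiply the $w$-equation by $w_t$ and integrate over $S^{n-1}$ to get the energy
\[
E(t)=\int_{S^{n-1}}\Bigl(\tfrac12w_t^2-\tfrac12|\nabla_\theta w|^2-\tfrac{\lambda^{q-1}}{2}w^2+\tfrac{w^{q+1}}{q+1}\Bigr),
\]
which satisfies $E'(t)=a\int w_t^2+O(e^{-2t})$. Since $E$ is bounded and $a\ne0$, we get $\int^\infty\!\int w_t^2<\infty$. Hence every $\omega$-limit of $w(t+\cdot)$ is $t$-independent, and so is a nonnegative solution of
\[
-\Delta_\theta w+\lambda^{q-1}w=w^q\quad\text{on } S^{n-1}.
\]
By Gidas--Spruck's sphere classification for $\frac{n}{n-2}<q<\frac{n+2}{n-2}$ (the Bidaut-V\'eron--V\'eron result), its only solutions are the constants $0$ and $\lambda_{n,q}$. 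The $\omega$-limit set is connected, so $w\to0$ or $w\to\lambda_{n,q}$ uniformly.

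\textbf{Step 3: the two cases.} If $w\to\lambda_{n,q}$, this is the stated limit. To show $u$ is a distributional solution, observe that $u^q\sim|x|^{-2q/(q-1)}\in L^1$ because $q>\frac{n}{n-2}$, and $|\nabla u|\lesssim|x|^{-\frac{q+1}{q-1}}\in L^1$. A cutoff argument then shows the equation holds across $0$ with no Dirac mass, since $\int_{\partial B_\varepsilon}|\nabla u|\to0$.

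If $w\to0$, so that $u=o(|x|^{-2/(q-1)})$, I bootstrap. The potential $V=u^{q-1}=o(|x|^{-2})$ is small in the Hardy sense, and comparison with $|x|^{-\mu}$ for small $\mu>0$ gives $u\le C|x|^{-\varepsilon}$ for any $\varepsilon$. In particular $u\in L^p$ with $u^q\in L^{p}$ for large $p$, and $\int_{\partial B_\varepsilon}|\nabla u|\to0$. Then $u$ is a distributional solution with no delta at the origin: a Brezis--Lions argument shows any Dirac coefficient is zero, since $u=o(|x|^{2-n})$. Elliptic regularity then gives $u\in C^2(B_1)$.

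This avoids the restriction $q>2$ of \cite{gueddaKirane1995}, because the decay is obtained by comparison rather than ODE integration.
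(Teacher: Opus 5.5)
Your proof is correct once you follow the energy (Pohozaev) route, and that route is essentially the paper's argument: the doubling-lemma bound $u\le C|x|^{-2/(q-1)}$; the cylindrical energy with $E'(t)=\bigl(a-\tfrac12e^{-2t}\bigr)\int_{\mathbb S^{n-1}}w_t^2\,d\theta$, where $a\neq0$ forces every translation limit to be stationary; the classification of nonnegative solutions of $-\Delta_{\mathbb S^{n-1}}W+\lambda_{n,q}^{q-1}W=W^q$ as $0$ or $\lambda_{n,q}$; and a power-barrier comparison in the zero-limit case followed by elliptic regularity. Your variants are minor and all valid: you quote the Gidas--Spruck/Bidaut-V\'eron--V\'eron rigidity on the sphere, whereas the paper lifts $W$ to $|x|^{-2/(q-1)}W(x/|x|)$ and applies Caffarelli--Gidas--Spruck; and you use connectedness of the $\omega$-limit set, whereas the paper compares the energy levels $0$ and $E(t;\lambda_{n,q})>0$.

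Drop the moving-plane/Chen--Lin asymptotic-symmetry step, which is unnecessary. In the comparison step, make the inner-boundary control explicit, for instance with the paper's barrier $A|x|^{-\mu}+m_\varepsilon|x|^{-2/(q-1)}$, where $m_\varepsilon=\varepsilon^{2/(q-1)}\max_{\partial B_\varepsilon}u\to0$. This barrier is a strict supersolution near $0$ exactly because both $\mu$ and $\frac{2}{q-1}$ lie in $(0,n-2)$ and $u^{q-1}=o(|x|^{-2})$.
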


We emphasize that Theorem \ref{thm:isolated-singularity} does not assume radial symmetry of the solution or impose any boundary conditions. The main difficulty here is that the drift term breaks the symmetry and scaling invariance of the equation, so the classical methods for the Lane-Emden equation (see Caffarelli-Gidas-Spruck \cite{CGS} and Gidas-Spruck\cite{MR615628})
\begin{equation}\label{eq:lane-emden}
    -\Delta u=u^q
    \qquad \text{in } B_1\setminus\{0\}.
\end{equation}
cannot be applied directly. To address these difficulties, we will employ local blow-up analysis, establish a monotonicity formula, and construct new barrier functions to complete the proof of Theorem \ref{thm:isolated-singularity}.

We now turn to the Serrin critical case $q=\frac{n}{n-2}$ and prove the following result.

\begin{theorem}\label{thm:main2}
Let \(q=\frac{n}{n-2}\) and \(u\in C^2(B_1\setminus\{0\})\) be a nonnegative solution of
\eqref{eq:main}. Then either $u$ can be extended as a $C^2(B_1)$ solution of \eqref{eq:main} in $B_1$, or $u$ satisfies
\begin{equation*}
    \lim_{x\to0}
    |x|^{n-2}(-\ln |x|)^{\frac{n-2}{2}}u(x)
    =
    \left(\frac{n-2}{\sqrt2}\right)^{n-2}.
\end{equation*}
\end{theorem}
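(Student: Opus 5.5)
We follow the strategy for the Lane--Emden equation at the Serrin exponent (Aviles), adapted to the drift term. Set $q=\frac{n}{n-2}$, $t=-\ln|x|$, $\theta=x/|x|$, and $v(t,\theta)=|x|^{n-2}u(x)$. Since $\frac{2}{q-1}=n-2$, the weight $|x|^{n-2}$ is also the natural scaling of \eqref{eq:lane-emden}. A direct computation turns \eqref{eq:main} into
\begin{equation*}
v_{tt}-(n-2)v_t+\Delta_{S^{n-1}}v+v^{q}=e^{-2t}\Big(\tfrac12(n-2)v-\tfrac12 v_t+\tfrac{1}{q-1}v\Big)
\end{equation*}
on $(0,\infty)\times S^{n-1}$. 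The right-hand side is an exponentially small perturbation. Indeed, $x\cdot\nabla u=-(u_t)$ in these variables, so the drift and zeroth-order terms carry a factor $|x|^2=e^{-2t}$ relative to the Laplacian.

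\textbf{Step 1: a priori bound and Harnack.} We first prove $u(x)\le C|x|^{-(n-2)}$ near $0$ by the same local blow-up/doubling argument used for Theorem~\ref{thm:isolated-singularity}. Rescaling kills the drift and yields a nonnegative entire or punctured solution of $-\Delta w=w^q$. The subcritical Liouville theorem of Gidas--Spruck then applies, since $q<\frac{n+2}{n-2}$. This gives $v\le C$. Elliptic estimates on annuli give $|\nabla_\theta v|+|v_t|\le C$ and the Harnack inequality $\max_{S^{n-1}}v(t,\cdot)\le C\min_{S^{n-1}}v(t,\cdot)$. Hence it suffices to study the spherical average $\bar v(t)$, and to show that $v-\bar v$ decays faster than $\bar v$. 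For the oscillation we use the spectral gap of $-\Delta_{S^{n-1}}$ (first eigenvalue $n-1$), applied to the linearized equation for $v-\bar v$. Because $v^q\le Cv\cdot v^{q-1}$ and $v\to0$ (see Step 2), we get $\|v-\bar v\|\le C e^{-\delta t}$ plus an $o(\bar v)$ term.

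\textbf{Step 2: dichotomy.} The equation gives $-\Delta u+\frac12x\cdot\nabla u+\frac1{q-1}u=u^q\ge0$, so $u$ is a positive supersolution. Combined with the bound of Step 1, this shows that $u\in L^q_{loc}(B_1)$, since $|x|^{-(n-2)q}=|x|^{-n}$ is only borderline. Here we refine via the ODE below. Either $u$ is a distributional solution with $-\Delta u+\dots=u^q+c\delta_0$, or $c=0$. In this setting a nonzero Dirac mass is impossible because $u\ge c|x|^{2-n}$ would force $u^q\notin L^1$. So $\bar v\to0$. If moreover $\bar v(t)=O(e^{-\epsilon t})$, then $u=O(|x|^{2-n+\epsilon})$. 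A bootstrap using the Green representation in which we compare with the operator $-\Delta+\frac12x\cdot\nabla$ (the drift is harmless near $0$) gives $u\in L^\infty$, and the singularity is removable with $u\in C^2(B_1)$.

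\textbf{Step 3: the ODE and the logarithmic rate (main obstacle).} Averaging over $S^{n-1}$ gives
\begin{equation*}
\bar v''-(n-2)\bar v'+\bar v^{q}=E(t),\qquad |E(t)|\le o(\bar v^q)+Ce^{-2t}\bar v .
\end{equation*}
The term $\overline{v^q}-\bar v^q$ is controlled by the Harnack inequality and the oscillation decay of Step 1. In the non-removable case $\bar v$ decays slower than any exponential. The strong damping $-(n-2)\bar v'$ then makes the dynamics first order to leading order: $\bar v'\approx\frac{1}{n-2}\bar v^{q}$ with the sign convention that $\bar v$ is decreasing in $t$. More precisely,
\begin{equation*}
\bar v'=-\frac{1}{n-2}\bar v^q(1+o(1)),
\end{equation*}
and integrating gives $\bar v^{1-q}\sim\frac{q-1}{n-2}t=\frac{2}{(n-2)^2}t$. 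Hence
\begin{equation*}
\bar v\sim\Big(\frac{(n-2)^2}{2t}\Big)^{\frac{n-2}{2}}=\Big(\frac{n-2}{\sqrt2}\Big)^{n-2}t^{-\frac{n-2}{2}},
\end{equation*}
which is the claimed constant.

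The delicate point is to justify $\bar v''=o(\bar v^q)$, which excludes oscillatory or exponentially decaying alternatives. We would first try an energy/Lyapunov function. The functional $F(t)=\frac12\bar v'^2+\frac{1}{q+1}\bar v^{q+1}$ satisfies $F'=(n-2)\bar v'^2+E\bar v'$. This is combined with a comparison argument: we construct sub- and supersolutions of the form $(a\,t+b)^{-\frac{n-2}{2}}(1\pm t^{-\sigma})$. For these the $e^{-2t}$ perturbation is negligible. Comparison is run on $[T,\infty)$ for the operator in $t$, which is monotone once $\bar v$ is small. The comparison principle works because $\bar v$ is bounded and tends to zero. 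Finally, Step 1 transfers the asymptotics of $\bar v$ to $v$ uniformly in $\theta$, which gives the stated limit.
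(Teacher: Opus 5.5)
There is a genuine gap, and it sits at the heart of the theorem: proving the lower bound $\liminf_{t\to\infty}t^{\frac{n-2}{2}}\bar v(t)\ge\bigl(\frac{n-2}{\sqrt2}\bigr)^{n-2}$ for non-removable solutions. In Step 3 you assert $\bar v'=-\frac{1}{n-2}\bar v^{q}(1+o(1))$. You propose to prove it by comparing $\bar v$ on $[T,\infty)$ with $(at+b)^{-\frac{n-2}{2}}(1\pm t^{-\sigma})$, and you justify the comparison only by $\bar v$ being bounded and tending to $0$. Every nontrivial regular solution also satisfies these hypotheses. For it, $u(0)>0$ and $\bar v(t)\sim u(0)e^{-(n-2)t}$, which lies below any such subsolution, so the comparison cannot hold as stated.

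The obstruction is structural. The map $y\mapsto y''+(n-2)y'+y^{q}$ has the positive zeroth-order coefficient $qy^{q-1}$, so there is no plain comparison principle. A weighted one is available, since $qy^{q-1}$ is small, but it needs the ordering at $t=\infty$ at a rate beating the weight, and that is exactly the unknown. Knowing that $\bar v$ is ``slower than any exponential'' does not supply this: $e^{-\sqrt t}$ is slower than every exponential yet is $o(t^{-\frac{n-2}{2}})$. Your Lyapunov function $F$ only yields $\int^\infty\bar v'^2\,dt<\infty$. So your split into ``$\bar v=O(e^{-\epsilon t})$, hence removable'' versus ``slow manifold'' leaves the intermediate regime completely open.

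The missing idea is a way to dispose of the case $t^{\frac{n-2}{2}}\bar v\to0$ directly. The paper works with the normalized $\tilde v=t^{\frac{n-2}{2}}|x|^{n-2}u$. Its spherical mean satisfies $\bar{\tilde v}''+\bigl(n-2+o(1)\bigr)\bar{\tilde v}'=\frac1t f(\bar{\tilde v})+R$, where $R\in L^1$ and $f(s)=\frac{(n-2)^2}{2}s-s^{\frac{n}{n-2}}>0$ on $\bigl(0,(\frac{n-2}{\sqrt2})^{n-2}\bigr)$. The paper combines this with the one-sided upper bound from the integrating factor $e^{A(t)}w_t$ and Jensen's inequality (Lemma~\ref{lem:spherical-average}). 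This excludes oscillation, and since $\int^\infty dt/t=\infty$ it forces $\lim\bar{\tilde v}\in\{0,(\frac{n-2}{\sqrt2})^{n-2}\}$.

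The limit-zero case is then handled by a removability criterion at the logarithmic threshold (Lemma~\ref{lem:zero-limit-removable}). If $u^{\frac{2}{n-2}}=o(|x|^{-2}(-\ln|x|)^{-1})$, then $(-\ln|x|^2)^{-s}u$ is a subsolution for an operator with an explicit radial solution $\Psi$. This gives $u\le C(-\ln|x|)^{s}$, and regularity follows. Your criterion $u=O(|x|^{2-n+\epsilon})$ is far too strong to close the dichotomy. You need either such a threshold criterion or a separate proof that $\bar v=o(t^{-\frac{n-2}{2}})$ forces exponential decay.

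There are also three further errors.
\begin{enumerate}
\item With $t=-\ln|x|$ the correct equation is $v_{tt}+(n-2)v_t+\Delta_{\mathbb S^{n-1}}v+v^{q}=-\frac12e^{-2t}v_t$. The zeroth-order terms cancel because $\frac1{q-1}=\frac{n-2}{2}$. Your damping term has the wrong sign, which is why you had to impose the decreasing direction ``by convention''.
\item The bound $u\le C|x|^{2-n}$ gives only $u^{q}\le C|x|^{-n}$, which is not integrable, so it does not yield $u\in L^{q}_{loc}(B_1)$. You need a test-function argument as in Lemma~\ref{lem:local-integrability-distribution}, which works precisely because $n(1-\frac1q)-2=0$.
\item As written, Steps 1--3 lean on each other circularly: Step 1 uses $v\to0$ from Step 2, and Step 2 defers to Step 3.
\end{enumerate}
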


For the Lane-Emden equation \eqref{eq:lane-emden} with $q=n/(n-2)$, Aviles \cite{aviles1987local,Aviles1983} has established an analogous classification result. In our setting, however, the drift term \(\frac12 x\cdot\nabla u\) and the linear term \(\frac{n-2}{2}u\) destroy the exact homogeneity and produce extra lower-order perturbations after passing to logarithmic cylindrical coordinates. Hence, the corresponding energy identities, angular-decay estimates, and removable-singularity argument of Aviles\cite{aviles1987local,Aviles1983} require substantial modification. Specifically, we need to establish a logarithmic upper bound adapted to the drift operator, control the decay of angular oscillation, prove the integrability of both \(v_t\) and \(\nabla_\theta v\) in the presence of
the extra \(e^{-2t}\) and \(t^{-2}\) error terms, and construct a new logarithmic barrier function for the zero-limit case.

Finally, we consider the supercritical case $q>\frac{n+2}{n-2}$, which was studied by Bidaut-V\'eron and V\'eron
\cite{bidautVeronVeron1991} for the Lane-Emden equation \eqref{eq:lane-emden}. Under an appropriate upper bound assumption, we establish the following asymptotic result.

\begin{theorem}\label{thm:supercritical-alternative}
Let $q>\frac{n+2}{n-2}$ and $u\in C^2(B_1\setminus\{0\})$ be a nonnegative solution of \eqref{eq:main}. Assume that
\begin{equation}\label{eq:supercritical-scale-bound}
    |x|^{\frac{2}{q-1}} u(x)\in L^\infty_{\mathrm{loc}}(B_1).
\end{equation}
Then either $u$ can be extended as a $C^2(B_1)$ solution of \eqref{eq:main} in $B_1$, or there exists a positive function
$\omega\in C^\infty(\mathbb S^{n-1})$ satisfying
\begin{equation}\label{eq:supercritical-spherical-profile}
    \Delta_{\mathbb S^{n-1}} \omega
    -
    \frac{2}{q-1}
    \left(
        n-2-\frac{2}{q-1}
    \right)\omega
    +
    \omega^q=0
    \qquad\text{on }\mathbb S^{n-1}
\end{equation}
such that for every integer $k\ge0$,
\[
    r^{\frac{2}{q-1}} u(r,\cdot) \to \omega
    \quad\text{in }C^k(\mathbb S^{n-1}) \text{ as  } r \to 0.
\]
\end{theorem}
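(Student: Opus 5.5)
We pass to logarithmic cylindrical coordinates and work with
\[
v(t,\theta)=r^{\frac{2}{q-1}}u(r\theta), \qquad t=-\ln r .
\]
A direct computation turns \eqref{eq:main} into
\[
v_{tt}-a\,v_t+\Delta_{\mathbb S^{n-1}}v-\lambda_{n,q}^{q-1}v+v^q=e^{-2t}\Big(\tfrac{1}{q-1}v-\tfrac{1}{q-1}v-\tfrac12 v_t\Big)\!,
\]
that is, the Lane-Emden cylindrical equation plus a perturbation of the form $-\frac12 e^{-2t}(\frac{2}{q-1}v+v_t)$ up to harmless rearrangement. Here $a=n-2-\frac{4}{q-1}<0$ because $q>\frac{n+2}{n-2}$. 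By \eqref{eq:supercritical-scale-bound}, $v$ is bounded on $[t_0,\infty)\times\mathbb S^{n-1}$.

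Interior elliptic estimates in the original variables, applied on annuli $B_{2r}\setminus B_{r/2}$ after rescaling by $r$, give $r^{\frac{2}{q-1}+k}|\nabla^k u|\le C_k$. Equivalently, $v$ is bounded in $C^k([t_0,\infty)\times\mathbb S^{n-1})$ for every $k$. This makes the trajectory $\{v(t,\cdot)\}$ precompact in every $C^k(\mathbb S^{n-1})$.

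The first main step is an energy (Lyapunov) function. We set
\[
E(t)=\int_{\mathbb S^{n-1}}\Big(\tfrac12 v_t^2-\tfrac12|\nabla_\theta v|^2-\tfrac{\lambda_{n,q}^{q-1}}2v^2+\tfrac1{q+1}v^{q+1}\Big)\,d\theta .
\]
Multiplying the equation by $v_t$ gives
\[
E'(t)=a\int v_t^2+O\!\left(e^{-2t}\right),
\]
where the error is integrable by the $C^1$ bounds. Since $E$ is bounded, this yields $\int_{t_0}^\infty\!\int v_t^2<\infty$. Combined with the uniform $C^k$ bounds (interpolation and uniform continuity of $\|v_t\|_{L^2}^2$), we get $v_t,v_{tt}\to0$ in $C^k$. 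Consequently the $\omega$-limit set $\Gamma$ of the trajectory in $C^k(\mathbb S^{n-1})$ is nonempty, compact and connected. It consists of nonnegative solutions of \eqref{eq:supercritical-spherical-profile}: pass to the limit in the equation, where $e^{-2t}$ terms vanish. By the strong maximum principle each element is either $\equiv0$ or positive.

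The second step is the dichotomy. If $0\in\Gamma$, I claim $v\to0$ and the singularity is removable. Since $\lambda_{n,q}^{q-1}>0$, the zero solution is isolated among nonnegative solutions of \eqref{eq:supercritical-spherical-profile}: integrating gives $\lambda^{q-1}\int\omega=\int\omega^q$, so a positive solution satisfies $\max\omega\ge\lambda_{n,q}$. By connectedness of $\Gamma$, if $0\in\Gamma$ then $\Gamma=\{0\}$, so $|x|^{\frac{2}{q-1}}u\to0$. To get removability from $u=o(|x|^{-2/(q-1)})$, I write the equation as a linear one,
\[
-\Delta u+\tfrac12x\cdot\nabla u+\big(\tfrac1{q-1}-V\big)u=0,\qquad V=u^{q-1}=o(|x|^{-2}),
\]
and use barriers of the form $\varepsilon|x|^{2-n}+C|x|^{-\delta}$ with small $\delta>0$; the drift only adds lower-order terms near $0$. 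This shows $u\le C|x|^{-\delta}$ for every small $\delta$. Then $u^{q-1}\in L^{p}$ for some $p>n/2$, so a standard Serrin/Brezis–Lions removability argument gives $u\in L^\infty$. Finally, elliptic regularity yields a $C^2(B_1)$ extension. Alternatively, one could reuse the barrier construction the paper builds for Theorem \ref{thm:isolated-singularity}.

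The third step, and the main obstacle, arises when $\Gamma$ consists of positive solutions: we must show $\Gamma$ is a single point. In the Lane-Emden case Bidaut-Véron and Véron handle this; without uniqueness of spherical solutions it is nontrivial. I would apply the Łojasiewicz–Simon inequality to the analytic functional
\[
J(\omega)=\int\Big(\tfrac12|\nabla\omega|^2+\tfrac{\lambda^{q-1}}2\omega^2-\tfrac1{q+1}\omega^{q+1}\Big),
\]
which is analytic near positive $\omega$ since $\omega$ is bounded away from $0$. Then I would follow the Simon argument for asymptotically autonomous gradient-like systems, with the $O(e^{-2t})$ perturbation. Its exponentially decaying size is exactly what Simon-type (Huang–Takáč/Chill) results permit. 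This gives $\int^\infty\|v_t\|_{L^2}dt<\infty$, hence $v(t)\to\omega$ in $L^2$, and then in every $C^k$ by precompactness. Smoothness of $\omega$ follows from elliptic regularity on $\mathbb S^{n-1}$.
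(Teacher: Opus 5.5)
Your strategy is the paper's. Both arguments use a monotone energy on the cylinder, $t$-independent subsequential limits solving \eqref{eq:supercritical-spherical-profile}, a zero/positive dichotomy, removability in the zero case, and Simon's convergence theorem (analyticity of $z\mapsto z^q$ away from $0$) in the positive case.

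The local differences are valid.
\begin{itemize}
\item You get the dichotomy from connectedness of the $\omega$-limit set $\Gamma$ together with $\max\omega\ge\lambda_{n,q}$. The paper instead uses that every limit $W$ has energy $\bar E=(\frac12-\frac1{q+1})\int_{\mathbb S^{n-1}}W^{q+1}$, so one vanishing limit forces all limits to vanish.
\item The paper gets $w\ge\delta$ by a Harnack argument; you get it from compactness of $\Gamma$. State this explicitly: analyticity in Simon's theorem and your uniform $C^k$ bounds for every $k$ both depend on it, since $z^q$ is not smooth at $0$.
\item You keep $-\frac12e^{-2t}v_t$ as an external error. The paper absorbs it into the weight $e^{-(n-2-\frac4{q-1})s}\exp(-\frac c2e^{-2s})$, so the equation is exactly the Euler--Lagrange equation of an integrand differing from the autonomous one by $O(e^{-2s})$. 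That is precisely the form Simon's theorem accepts.
\end{itemize}
The equation is elliptic in $t$: in your $E$, $v_t^2$ and $|\nabla_\theta v|^2$ carry opposite signs. The Huang--Tak\'a\v{c} and Chill--Jendoubi results are primarily formulated for parabolic or damped-hyperbolic evolution equations, so they are not the right citation. Simon's own elliptic theorem is, and that is what the paper invokes.

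There is one real error, in your primary removability barrier. The function $\varepsilon|x|^{2-n}+C|x|^{-\delta}$ is not a supersolution of $\mathcal L=-\Delta+\frac12x\cdot\nabla+\frac1{q-1}-u^{q-1}$. Indeed $\mathcal L(|x|^{2-n})=(\frac1{q-1}-\frac{n-2}2-u^{q-1})|x|^{2-n}<0$, because the Hardy term $s(n-2-s)|x|^{-2}$ vanishes at $s=n-2$. Knowing only $u^{q-1}=o(|x|^{-2})$, you cannot dominate $\varepsilon u^{q-1}|x|^{2-n}=o(|x|^{-n})$ by $C\delta(n-2-\delta)|x|^{-2-\delta}$ near $0$. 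The culprit is the potential, not the drift.

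The vanishing part of the barrier must instead be $|x|^{-s}$ with $\frac2{q-1}\le s<n-2$. The inner boundary condition then follows from $u=o(|x|^{-2/(q-1)})$, and this is exactly Lemma~\ref{lem:zero-removable}. Your fallback of reusing that construction is therefore the correct argument and is what the paper does; the paper also obtains boundedness from a second barrier rather than via Serrin/Brezis--Lions.

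A harmless slip: for $q>\frac{n+2}{n-2}$ one has $a=n-2-\frac4{q-1}>0$, not $<0$. The perturbation is exactly $-\frac12e^{-2t}v_t$, with no $v$ term. Only $a\neq0$ is used.
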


\begin{remark}
For positive solutions of equation \eqref{eq:supercritical-spherical-profile}, Bidaut-V\'eron and V\'eron \cite{bidautVeronVeron1991} proved that only constant solutions exist when $\frac{n+2}{n-2}<q<\frac{n+1}{n-3}$, while Dancer, Guo and Wei \cite{DancerGW12} constructed infinitely many nonconstant solutions when $\frac{n+1}{n-3}< q < p_c(n-1)$, with $p_c(n-1) $ being the Joseph-Lundgren exponent in dimension $n-1$. When $q=\frac{n+1}{n-3}$, equation \eqref{eq:supercritical-spherical-profile} reduces to the Yamabe equation on $\mathbb S^{n-1}$, whose positive solutions were completely classified by Obata \cite{Obata}.
\end{remark}

The proof of Theorem \ref{thm:supercritical-alternative} relies on a perturbed energy identity and a careful application of Simon's convergence theorem to a weighted variational functional with exponentially decaying error terms. To handle removable singularities, we avoid the semigroup representation of the linearized equation in Bidaut-V\'eron and V\'eron \cite{bidautVeronVeron1991}, and instead construct elementary exponential barriers on the cylinder. This approach provides a more robust proof of the removable singularity in our setting.

In the Sobolev critical case $q=\frac{n+2}{n-2}$, the drift term in equation \eqref{eq:main} makes it difficult to apply the moving plane method, and consequently,  the study of isolated singularities becomes more challenging. We will investigate this case in our subsequent work \cite{XYsubsequent}. For the critical Lane-Emden equation, also known as the Yamabe equation, its isolated singularities have been extensively studied; see \cite{CGS,chenc,Li,MR1666838,MarquesCVPDE,HanLiLiCPAM,XiongZhangIMRN,HanXiongZhangJFA} and the references therein. We also refer to \cite{AoJRAM20,CJSX2014,chenLin1999asymptotic,DdGWMathAnn,MR3869387,JX21Poincare,taliaferroZhang2006asymptotic,li2006fullyNonlinear,hanLiTeixeira2010asymptotic} for isolated singularities of other Yamabe-type equations, and to \cite{AndradeOJDE,AndradedoONonlinearity,CdoS2019,ChenLinMathAnn,GKSAPDE,YangZouZAMP} for isolated singularities of semilinear elliptic systems.

This paper is organized as follows. In Section \ref{sec pre}, we establish a criterion for the removability of singularities and a monotonicity formula, which will serve as essential tools for proving Theorems \ref{thm:isolated-singularity}, \ref{thm:main2} and \ref{thm:supercritical-alternative}. Sections \ref{sec:subcritical},  \ref{sec:borderline} and \ref{sec:supercritical} are devoted to the proofs of Theorems \ref{thm:isolated-singularity}, \ref{thm:main2} and \ref{thm:supercritical-alternative}, respectively. Throughout the following proofs, we assume that the nonnegative solution $u$ is nontrivial. The strong maximum principle then yields \(u>0\) in \(B_1\setminus\{0\}\).

\section{Removable singularities and monotonicity formula}\label{sec pre}
In this section, we establish the removability of singularities and a monotonicity formula, which are essential tools in the proofs of Theorems \ref{thm:isolated-singularity}, \ref{thm:main2} and \ref{thm:supercritical-alternative}. First, we prove that any nonnegative solution of \eqref{eq:main} with $q\geq \frac{n}{n-2}$ is a distributional solution on the whole $B_1$. While this was already known from Guedda and Kirane \cite{gueddaKirane1995}, our proof is more direct, following an idea from Yang \cite{YangCVPDE20}.

\begin{lemma}\label{lem:local-integrability-distribution}
Let $q\geq \frac{n}{n-2}$ and let $u\in C^2(B_1\setminus\{0\})$ be a nonnegative solution of
\eqref{eq:main}. Then $u\in L^q_{\mathrm{loc}}(B_1)$.
Moreover, $u$ satisfies \eqref{eq:main} in the sense of distributions in
$B_1$, i.e., for every $\varphi\in C_c^\infty(B_1)$,
\begin{equation*}
\int_{B_1}u\left(
        -\Delta\varphi
        -\frac12 x\cdot\nabla\varphi
        +\left(\frac1{q-1}-\frac n2\right)\varphi
    \right)\,dx
    -\int_{B_1}u^q\varphi\,dx=0.
\end{equation*}
\end{lemma}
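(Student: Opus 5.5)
The plan is to test the equation against a cutoff that vanishes near the origin and then let the cutoff shrink, in the spirit of Yang \cite{YangCVPDE20}.

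First, rewrite the operator in divergence form. Note that $-\Delta u+\frac12 x\cdot\nabla u = -\mathrm{div}(\nabla u)+\frac12\mathrm{div}(xu)-\frac n2 u$. This accounts for the formal adjoint appearing in the statement.

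Next, fix $0\le\eta_\varepsilon\le1$ smooth with $\eta_\varepsilon=0$ on $B_\varepsilon$ and $\eta_\varepsilon=1$ outside $B_{2\varepsilon}$, and with $|\nabla\eta_\varepsilon|\le C/\varepsilon$ and $|\nabla^2\eta_\varepsilon|\le C/\varepsilon^2$. To get integrability, take a nonnegative $\psi\in C_c^\infty(B_1)$ with $\psi=1$ near $0$, and use the test function $\phi=(\eta_\varepsilon\psi)^m$ with $m$ large, say $m\ge 2q'$ where $q'=q/(q-1)$. Multiplying \eqref{eq:main} by $\phi$ and integrating by parts (legitimate, since $\phi$ is supported away from $0$) gives
\[
\int u^q\phi=\int u\Big(-\Delta\phi-\tfrac12x\cdot\nabla\phi+\big(\tfrac1{q-1}-\tfrac n2\big)\phi\Big).
\]
The right-hand side is bounded by $C\int u\,\phi^{1-2/m}\big(|\nabla^2(\eta_\varepsilon\psi)|+|\nabla(\eta_\varepsilon\psi)|^2+|\nabla(\eta_\varepsilon\psi)|+1\big)$, using $|x|\le 1$. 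By Young's inequality, $u\phi^{1-2/m}g\le \frac12 u^q\phi + C g^{q'}\phi^{1-2q'/m}$, and the last power of $\phi$ is bounded because $m\ge 2q'$. Absorbing, we get $\int u^q\phi\le C\int_{\mathrm{supp}\,\nabla(\eta_\varepsilon\psi)}g^{q'}+C$. On the annulus $B_{2\varepsilon}\setminus B_\varepsilon$ we have $g\lesssim\varepsilon^{-2}$, so that contribution is $\lesssim \varepsilon^{n-2q'}$. Now $q\ge\frac n{n-2}$ is equivalent to $q'\le n/2$, so $\varepsilon^{n-2q'}$ stays bounded; this is exactly where the hypothesis enters. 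Letting $\varepsilon\to0$ and applying Fatou's lemma gives $u\in L^q$ near $0$, hence $u\in L^q_{\mathrm{loc}}(B_1)$, and in particular $u\in L^1_{\mathrm{loc}}$.

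For the distributional identity, take a general $\varphi\in C_c^\infty(B_1)$ and test with $\varphi\eta_\varepsilon$. The error terms involve $\nabla\eta_\varepsilon$ and $\nabla^2\eta_\varepsilon$ and are supported in $A_\varepsilon=B_{2\varepsilon}\setminus B_\varepsilon$. They are bounded by
\[
C\varepsilon^{-2}\int_{A_\varepsilon}u\le C\varepsilon^{-2}\Big(\int_{A_\varepsilon}u^q\Big)^{1/q}\varepsilon^{n/q'}=C\varepsilon^{n/q'-2}\|u\|_{L^q(A_\varepsilon)}.
\]
Since $n/q'-2\ge0$ and $\|u\|_{L^q(A_\varepsilon)}\to0$ by absolute continuity of the integral, these errors vanish. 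The terms with $x\cdot\nabla\eta_\varepsilon$ are of lower order. Dominated convergence then handles the main terms, because $u,u^q\in L^1_{\mathrm{loc}}$.

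The main obstacle is the borderline case $q=\frac n{n-2}$. There the first estimate gives only boundedness, not decay, so the order of the argument matters: integrability must be proved first, and the vanishing of the error terms in the second step must then come from absolute continuity rather than from any power of $\varepsilon$.
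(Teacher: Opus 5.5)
Your proof is correct and follows essentially the same route as the paper. Both arguments test against a power of a cutoff vanishing near the origin; the paper uses $\eta_\varepsilon^{2q/(q-1)}$, which is your $m=2q'$. Both absorb via H\"older/Young, using the nonnegative exponent $n(1-1/q)-2\ge 0$ that comes from $q\ge\frac{n}{n-2}$, and both then apply Fatou's lemma. Both also kill the cutoff errors in the distributional identity through $\varepsilon^{n/q'-2}\|u\|_{L^q(B_{2\varepsilon}\setminus B_\varepsilon)}\to 0$. The differences are cosmetic: the paper integrates over $B_r$ and keeps a fixed boundary term on $\partial B_r$ instead of using your outer cutoff $\psi$, and it drops the linear term by its sign ($\frac{1}{q-1}-\frac n2<0$) rather than absorbing it.
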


\begin{proof}
We first prove the local $L^q$ estimate. Fix $0<r<1$ and choose
$0<\varepsilon<r/4$.  Let $\eta\in C^\infty[0,\infty)$ satisfy
$0\leq\eta\leq1$, $\eta=0$ on $[0,1]$, $\eta=1$ on $[2,\infty)$, and
set
\[
    \eta_\varepsilon(x):=\eta\left(\frac{|x|}{\varepsilon}\right),
    \qquad
    \xi_\varepsilon:=\eta_\varepsilon^{\frac{2q}{q-1}}.
\]
Then $\xi_\varepsilon=0$ in $B_\varepsilon$, $\xi_\varepsilon=1$ in
$\mathbb R^n\setminus B_{2\varepsilon}$ and
\begin{equation}\label{eq:xi-derivative-bounds}
    |\nabla\xi_\varepsilon|
    \leq C\varepsilon^{-1}\xi_\varepsilon^{1/q}\chi_{{B_{2\varepsilon}\setminus B_\varepsilon}},
    \qquad
    |\Delta\xi_\varepsilon|
    \leq C\varepsilon^{-2}\xi_\varepsilon^{1/q}\chi_{{B_{2\varepsilon}\setminus B_\varepsilon}}.
\end{equation}
Multiplying \eqref{eq:main} by $\xi_\varepsilon$ and integrating over
$B_r$ gives
\begin{equation}\label{eq:local-Lq-identity}
\begin{aligned}
I_\varepsilon(r)
&:=\int_{B_r}u^q\xi_\varepsilon\,dx                                      \\
&=\int_{B_r}u(-\Delta\xi_\varepsilon)\,dx
  -\frac12\int_{B_r}u\,x\cdot\nabla\xi_\varepsilon\,dx                    \\
&\quad
  +\left(\frac1{q-1}-\frac n2\right)\int_{B_r}u\xi_\varepsilon\,dx
  +\int_{\partial B_r}\left(\frac r2u-\partial_\nu u\right)\,dS.
\end{aligned}
\end{equation}
The boundary term is finite and independent of $\varepsilon$. Hence it
is bounded above by a constant $C_r$. Also, the third term on the right-hand side of
\eqref{eq:local-Lq-identity} is nonpositive.
Hence
\begin{equation*}
    I_\varepsilon(r)
    \leq C_r
    +\left|\int_{B_r}u\Delta\xi_\varepsilon\,dx\right|
    +\frac12\left|\int_{B_r}u\,x\cdot\nabla\xi_\varepsilon\,dx\right|.
\end{equation*}
Using \eqref{eq:xi-derivative-bounds} and H\"older's inequality, we get
\[
\begin{aligned}
\left|\int_{B_r}u\Delta\xi_\varepsilon\,dx\right|
&\leq C\varepsilon^{-2}
    \int_{B_{2\varepsilon}\setminus B_\varepsilon}u\xi_\varepsilon^{1/q}\,dx                         \\
&\leq C\varepsilon^{-2}|B_{2\varepsilon}\setminus B_\varepsilon|^{1-1/q}
    \left(\int_{B_{2\varepsilon}\setminus B_\varepsilon}u^q\xi_\varepsilon\,dx\right)^{1/q}           \\
&\leq C\varepsilon^{n(1-1/q)-2} I_\varepsilon(r)^{1/q}
\leq C I_\varepsilon(r)^{1/q}
\end{aligned}
\]
because of $n(1-1/q)-2\geq0$. Similarly,
\[
\begin{aligned}
\left|\int_{B_r}u\,x\cdot\nabla\xi_\varepsilon\,dx\right|
&\leq C\int_{B_{2\varepsilon}\setminus B_\varepsilon}u\xi_\varepsilon^{1/q}\,dx
 \leq C I_\varepsilon(r)^{1/q}.
\end{aligned}
\]
Thus
\[
    I_\varepsilon(r)\leq C_r+C I_\varepsilon(r)^{1/q}.
\]
By Young's inequality, we obtain
\begin{equation*}
    I_\varepsilon(r)\leq C_r,
\end{equation*}
where the constant is independent of $\varepsilon$.  Letting
$\varepsilon \to 0$ and using Fatou's lemma, we have
\[
    \int_{B_r}u^q\,dx<\infty.
\]
Since $0<r<1$ is arbitrary, $u\in L^q_{\mathrm{loc}}(B_1)$.

It remains to show that $u$ satisfies \eqref{eq:main} in the sense of distributions in
$B_1$.  For any $\varphi\in C_c^\infty(B_1)$, using the same
$\eta_\varepsilon$ as above, the function
$\eta_\varepsilon\varphi$ is compactly supported in
$B_1\setminus\{0\}$. Therefore
\[
\int_{B_1}(\eta_\varepsilon\varphi)
\left(-\Delta u+\frac12 x\cdot\nabla u+\frac1{q-1}u-u^q\right)\,dx=0.
\]
Integrating by parts, the above equation becomes
\begin{equation}\label{eq:cutoff-distribution-identity}
\begin{aligned}
0
&=\int_{B_1}u\eta_\varepsilon\left(
        -\Delta\varphi
        -\frac12 x\cdot\nabla\varphi
        +\left(\frac1{q-1}-\frac n2\right)\varphi
    \right)\,dx
    -\int_{B_1}u^q\eta_\varepsilon\varphi\,dx                               \\
&\quad
    -\int_{B_1}u\left(
        \varphi\Delta\eta_\varepsilon
        +2\nabla\eta_\varepsilon\cdot\nabla\varphi
        +\frac12\varphi\,x\cdot\nabla\eta_\varepsilon
    \right)\,dx.
\end{aligned}
\end{equation}
Applying H\"older's inequality, we have
\[
\begin{aligned}
\left|\int_{B_1}u\varphi\Delta\eta_\varepsilon\,dx\right|
&\leq C\varepsilon^{-2}\int_{B_{2\varepsilon}\setminus B_\varepsilon}u\,dx                         \\
&\leq C\varepsilon^{n(1-1/q)-2}
    \left(\int_{B_{2\varepsilon}\setminus B_\varepsilon}u^q\,dx\right)^{1/q}
\to 0,
\end{aligned}
\]
where we have used $n(1-1/q)-2\geq0$ and
$u^q\in L^1(B_r)$.  Similarly, we also have
\[
\left|\int_{B_1}u\nabla\eta_\varepsilon\cdot\nabla\varphi\,dx\right|
\leq C\varepsilon^{n(1-1/q)-1}
    \left(\int_{B_{2\varepsilon}\setminus B_\varepsilon}u^q\,dx\right)^{1/q}
\to 0
\]
and
\[
\left|\int_{B_1}u\varphi\,x\cdot\nabla\eta_\varepsilon\,dx\right|
\leq C\varepsilon^{n(1-1/q)}
    \left(\int_{B_{2\varepsilon}\setminus B_\varepsilon}u^q\,dx\right)^{1/q}
\to 0.
\]
By letting $\varepsilon \to 0$ in \eqref{eq:cutoff-distribution-identity}, we arrive at the desired equality.
\end{proof}

We now establish a removability theorem for singularities. Its proof relies on the construction of barrier functions and an iterative application of the maximum principle.

\begin{lemma}\label{lem:zero-removable}
Let $q>\frac{n}{n-2}$ and let
$u\in C^2(B_1\setminus\{0\})$ be a nonnegative solution of \eqref{eq:main}. If
\begin{equation}\label{eq:zero-limit}
    \lim_{|x|\to 0}|x|^{\frac{2}{q-1}}u(x)=0,
\end{equation}
then $u$ can be extended  as a $C^2(B_1)$ solution of \eqref{eq:main} in $B_1$.
\end{lemma}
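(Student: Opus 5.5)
We follow the remark before the statement: build barrier functions and apply the maximum principle iteratively, improving the decay exponent of $u$ at the origin step by step until $u$ is bounded. After that, standard elliptic theory removes the singularity.

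\textbf{Step 1: a linear inequality with small potential.} Write the equation as
\[
-\Delta u+\tfrac12 x\cdot\nabla u+\tfrac1{q-1}u=V(x)u,\qquad V:=u^{q-1}.
\]
By \eqref{eq:zero-limit}, $V(x)=o(|x|^{-2})$ as $x\to0$. So for any $\delta>0$ there is $r_\delta>0$ with $V\le \delta|x|^{-2}$ in $B_{r_\delta}\setminus\{0\}$. Hence $u$ is a positive supersolution-type object for
\[
Lw:=-\Delta w+\tfrac12 x\cdot\nabla w+\tfrac1{q-1}w-\delta|x|^{-2}w\le 0\ \text{(for $u$: } Lu\le 0).
\]
We compare $u$ with radial barriers $\Psi(x)=A|x|^{-\alpha}+\varepsilon|x|^{2-n}$, or more flexibly $\Psi=A|x|^{-\alpha}-B|x|^{\beta}$-type corrections. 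For $\phi=|x|^{-\alpha}$ we have
\[
-\Delta\phi=\alpha(n-2-\alpha)|x|^{-\alpha-2},\qquad \tfrac12x\cdot\nabla\phi=-\tfrac{\alpha}{2}|x|^{-\alpha}.
\]
Therefore
\[
L\phi=\bigl(\alpha(n-2-\alpha)-\delta\bigr)|x|^{-\alpha-2}+\bigl(\tfrac1{q-1}-\tfrac\alpha2\bigr)|x|^{-\alpha}.
\]
For $0<\alpha<n-2$ fixed and $\delta$ small, the first term dominates near $0$, so $L\phi>0$ in a small punctured ball $B_{r_0}\setminus\{0\}$.

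\textbf{Step 2: comparison.} Take $\alpha$ small (any $0<\alpha<\frac{2}{q-1}$ works, and $\frac2{q-1}<n-2$ since $q>\frac n{n-2}$). Set $w=A\phi+\varepsilon|x|^{2-n}$, with $A$ chosen so that $w\ge u$ on $\partial B_{r_0}$. The term $|x|^{2-n}$ is also a strict supersolution near $0$ for small $\delta$: there $\alpha=n-2$ gives a zero leading term, but the drift gives $+(\frac{1}{q-1}-\frac{n-2}{2})|x|^{2-n}$, which is negative since $q>\frac n{n-2}$.

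The main obstacle is here. I would instead use $|x|^{2-n+\gamma}$ with small $\gamma>0$, so the leading coefficient $\gamma(n-2-\gamma)\cdot$ positive dominates $\delta$. Since $u=o(|x|^{-2/(q-1)})$ and $2-n+\gamma<-\frac{2}{q-1}$, we get $u\le w$ near $0$. The maximum principle for $L$ then applies on $B_{r_0}\setminus B_\rho$. This is valid because $L$ has a positive supersolution ($\phi$), so it satisfies the maximum principle via the quotient $u/\phi$. Letting $\rho\to0$ and then $\varepsilon\to0$ yields $u\le A|x|^{-\alpha}$.

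\textbf{Step 3: bootstrap.} With $u\le C|x|^{-\alpha}$, the potential improves to $V\le C|x|^{-\alpha(q-1)}$, with $\alpha(q-1)<2$. Now $u$ satisfies $-\Delta u+\frac12x\cdot\nabla u+\frac{1}{q-1}u=f$ with $f=u^q\le C|x|^{-\alpha q}$. By Lemma~\ref{lem:local-integrability-distribution}, $u$ is a distributional solution in $B_1$. Write $u=h+\Gamma * f$ locally, where $h$ solves the homogeneous equation and is smooth. Since $u\in L^q_{\rm loc}$, there is no Dirac mass: a $c|x|^{2-n}$ term is excluded because $|x|^{(2-n)q}\notin L^1$ when $q\ge \frac n{n-2}$. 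The Riesz potential estimate gives $u\le C|x|^{2-\alpha q}$, or bounded if $\alpha q<2$, or a log bound if $\alpha q=2$. The new exponent $\alpha'=\alpha q-2$ satisfies $\alpha'<\alpha$ iff $\alpha(q-1)<2$, which holds. Moreover, the gap $\frac{2}{q-1}-\alpha$ grows geometrically by the factor $q$, so after finitely many steps the exponent becomes negative.

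Once $u$ is bounded, $u\in L^\infty$ and the distributional equation gives $u\in W^{2,p}_{\rm loc}(B_1)$ for all $p$. Schauder estimates then give $u\in C^{2}(B_1)$, which is the desired extension. The delicate point is Step 2: choosing barrier exponents that are admissible, meaning they dominate $u$ near $0$, while $L$ stays positive on them. This requires $q>\frac n{n-2}$.
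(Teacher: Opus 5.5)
Your proof is correct in substance, but after the first comparison step it takes a different route from the paper.

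\textbf{First step.} This is essentially the paper's argument: a positive strict supersolution for the operator with potential $u^{q-1}=o(|x|^{-2})$, compared with $u$ through the quotient $u/w$. Two details differ.
\begin{itemize}
\item You allow any exponent $\alpha<\frac2{q-1}$ and control the inner boundary with the more singular term $\varepsilon|x|^{-(n-2-\gamma)}$.
\item The paper fixes the exponent $\frac1{q-1}$ and uses the inner term $m_\varepsilon|x|^{-\frac2{q-1}}$, with $m_\varepsilon=\varepsilon^{\frac2{q-1}}\max_{\partial B_\varepsilon}u\to0$.
\end{itemize}
Both work. In your version, \eqref{eq:zero-limit} is needed only to make the potential small; the inner boundary requires just $u=O(|x|^{-\frac2{q-1}})$.

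\textbf{Second half.} This is where the routes really differ.
\begin{itemize}
\item The paper chooses $\frac1{q-1}$ precisely so that $u^{q-1}\le A^{q-1}|x|^{-1}$. This potential is absorbed by the linear barrier, since $\mathcal L(r-|x|)\ge c_0|x|^{-1}$. One more maximum-principle comparison with $M+K(r-|x|)+A\varepsilon^{\frac1{q-1}}|x|^{-\frac2{q-1}}$ then gives boundedness directly. Lemma \ref{lem:local-integrability-distribution} enters only at the end, for regularity. There is no iteration and no potential theory for the drift operator.
\item You instead run a Serrin/Brezis--Lions-type bootstrap through the distributional equation and Riesz potentials. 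This costs more machinery and an iteration, but it is standard and robust.
\end{itemize}

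\textbf{What Step 3 needs to be complete.} Because of the variable drift, ``$u=h+\Gamma*f$'' is not literally a convolution. A clean fix:
\begin{itemize}
\item Get $|x|\,|\nabla u(x)|\le C\bigl(|x|^{-\alpha}+|x|^{2-\alpha q}\bigr)\le C|x|^{-\alpha}$ from the rescaled interior gradient estimate, as the paper does. The last inequality uses $\alpha q-2<\alpha$.
\item Move the drift and zeroth-order terms to the right, giving $-\Delta u=g$ with $|g|\le C|x|^{-\alpha q}$.
\item Obtain this identity in $\mathcal D'(B_r)$ from Lemma \ref{lem:local-integrability-distribution} by integrating $x\cdot\nabla$ by parts across the origin. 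The boundary term on $\partial B_\epsilon$ is $O(\epsilon^{n-\alpha})\to0$.
\item Apply Weyl's lemma to $u-\Gamma*(g\chi_{B_r})$.
\end{itemize}
The Riesz estimate $\int_{B_r}|x-y|^{2-n}|y|^{-\alpha q}\,dy\le C|x|^{2-\alpha q}$ requires $\alpha q<n$. This holds because $\alpha q<\frac{2q}{q-1}<n$ when $q>\frac n{n-2}$, and you should say so explicitly. The same gradient bound is needed at the end to put the drift term in $L^\infty$ before invoking the $W^{2,p}$ and Schauder estimates.
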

\begin{proof}

We write equation \eqref{eq:main} as
\begin{equation*}
        \mathcal{L} u=0,
        \qquad
        \mathcal{L}:=-\Delta +\frac12 x\cdot\nabla+\frac{1}{q-1}-u^{q-1}.
\end{equation*}
For $s>0$, a direct computation yields
\begin{equation*}
\begin{aligned}
            \mathcal{L}(|x|^{-s})
       & =\left(s(n-2-s)|x|^{-2}+\frac{1}{q-1}-\frac{s}{2}-u^{q-1}(x)\right)|x|^{-s} \\
        &=\left(s(n-2-s)|x|^{-2}+\frac{1}{q-1}-\frac{s}{2}-o(|x|^{-2})\right)|x|^{-s}   \qquad \text{as } x \to 0.
\end{aligned}
\end{equation*}
Since $q>\frac{n}{n-2}$, we have for $0<r<1$ small enough that
\begin{equation}\label{eq:L-power}
        \mathcal{L}(|x|^{-\frac{2}{q-1}})>0,
        \qquad
        \mathcal{L}(|x|^{-\frac{1}{q-1}})>0
        \qquad \text{in } B_r\setminus\{0\}.
\end{equation}
For $0<\varepsilon<r$, define
\[
        A:=r^\frac{1}{q-1} \max_{\partial B_r}u,\qquad m_\varepsilon:=\varepsilon^\frac{2}{q-1} \max_{\partial B_\varepsilon}u.
\]
By \eqref{eq:zero-limit}, we have $m_\varepsilon\to 0$ as $\varepsilon \to 0$. Consider the barrier
\[
        \Phi_\varepsilon(x):=A|x|^{-\frac{1}{q-1}}+m_\varepsilon |x|^{-\frac{2}{q-1}}.
\]
Then
\begin{equation*}
    \mathcal{L}\Phi_\varepsilon>0=\mathcal{L} u\qquad\text{in }A_{\varepsilon,r}:=\{x:\varepsilon<|x|<r\}.
\end{equation*}
Moreover,
\[
        \Phi_\varepsilon\ge u \qquad \text{on } \partial A_{\varepsilon,r}.
\]
We claim that
\begin{equation*}
     \Phi_\varepsilon \ge u\qquad \text{for } x\in A_{\varepsilon,r}.
\end{equation*}
Suppose otherwise. Then the quotient
\[
        z:=\frac{u}{\Phi_\varepsilon}
\]
has an interior maximum point $x_0\in A_{\varepsilon,r}$ such that
\[
        z(x_0)=\max_{\overline A_{\varepsilon,r}}\frac{u}{\Phi_\varepsilon}>1.
\]
At this point,
\[
        \nabla z(x_0)=0
        \qquad \mbox{and}  \qquad
        \Delta z(x_0)\le 0.
\]
Writing $u=z\Phi_\varepsilon$, we obtain
\begin{align*}
        \mathcal{L} u
        &=\mathcal{L}(z\Phi_\varepsilon)  \\
        &=-2\nabla z\cdot\nabla\Phi_\varepsilon
          +z(-\Delta\Phi_\varepsilon)
          +\Phi_\varepsilon(-\Delta z) \\
        &\quad
          +\frac12\Phi_\varepsilon x\cdot\nabla z
          +\frac12z x\cdot\nabla\Phi_\varepsilon
          +\left(\frac{1}{q-1}-u^{q-1}\right)z\Phi_\varepsilon.
\end{align*}
Evaluating at $x_0$ gives
\[
        0=(\mathcal{L} u)(x_0)
        =z(x_0)(\mathcal{L}\Phi_\varepsilon)(x_0)+\Phi_\varepsilon(x_0)(-\Delta z)(x_0)>0,
\]
because $z(x_0)>0$, $\Phi_\varepsilon(x_0)>0$, $\mathcal{L}\Phi_\varepsilon(x_0)>0$ and $-\Delta z(x_0)\ge0$.
This contradiction proves
\begin{equation*}
        u(x)\le A|x|^{-\frac{1}{q-1}}+m_\varepsilon |x|^{-\frac{2}{q-1}}
        \qquad \text{for } \varepsilon<|x|<r.
\end{equation*}
Letting $\varepsilon \to 0$ gives
\begin{equation}\label{eq:first-improvement}
        u(x)\le A |x|^{-\frac{1}{q-1}}
        \qquad \text{for } 0<|x|<r.
\end{equation}

We now use the maximum principle again to derive that $u$ is bounded near the origin.
By \eqref{eq:zero-limit}, we know
\[
    A^{q-1}r
    =
    \left(r^{\frac{2}{q-1}}\max_{\partial B_r}u\right)^{q-1}
    \to0
    \qquad\text{as }r \to 0.
\]
Thus, taking \(r>0\) sufficiently small, we may assume that \(A^{q-1}r\) is arbitrarily small.
Using \eqref{eq:first-improvement}, for $0<|x|<r$ we have
\begin{equation*}
    u^{q-1}(x)(r-|x|)\le \frac{A^{q-1}(r-|x|)}{|x|}\le \frac{A^{q-1}r}{|x|}.
\end{equation*}
Then
\begin{align*}
        \mathcal{L}(r-|x|)
        &=\frac{n-1}{|x|}-\frac{|x|}{2}+\frac{1}{q-1}(r-|x|)-u^{q-1}(x)(r-|x|)  \\
    &\ge
    \frac{n-1-A^{q-1}r}{|x|}-\frac{|x|}{2}      \ge
    \frac{n-1-A^{q-1}r-\frac12r^2}{|x|}
    \ge \frac{c_0}{|x|},
\end{align*}
where $c_0>0$, provided $r>0$ is chosen sufficiently small. Let $M:=\max\limits_{\partial B_r}u$.
Since
\[
        \mathcal{L} M=(\frac{1}{q-1}-u^{q-1})M\ge -\frac{A^{q-1}M}{|x|},
\]
we can choose $K>0$ so large that
\begin{equation}\label{eq:bounded-super-part}
        \mathcal{L}\left(M+K(r-|x|)\right)\ge 0
        \qquad \text{in }B_r\setminus\{0\}.
\end{equation}
For $0<\varepsilon<r$, define
\[
        \Psi_\varepsilon(x):=M+K(r-|x|)+A \varepsilon^\frac{1}{q-1} |x|^{-\frac{2}{q-1}}.
\]
By \eqref{eq:L-power} and \eqref{eq:bounded-super-part}, we have
\[
        \mathcal{L}\Psi_\varepsilon>0
        \qquad \text{in }A_{\varepsilon,r}.
\]
Notice that
\[
\Psi_\varepsilon\ge M\ge u
\qquad \text{on } \partial B_r
\]
 and
\begin{equation*}
    \Psi_\varepsilon\ge A \varepsilon^{-\frac{1}{q-1}} \ge u \qquad \text{on $\partial B_\varepsilon$.}
\end{equation*}
By a maximum principle argument similar to the one above, we obtain
\[
        u(x)\le M+K(r-|x|)+A \varepsilon^\frac{1}{q-1} |x|^{-\frac{2}{q-1}}
        \qquad \text{for }\varepsilon<|x|<r.
\]
Sending $\varepsilon \to  0$, we have
\begin{equation*}
        u(x)\le  M+Kr
        \qquad \text{for }0<|x|<r.
\end{equation*}
Thus, $u$ is bounded near the origin.

Finally, we apply elliptic estimates to show that the solution $u$ is $C^2$ near the origin. Let \(x_0\in B_{1/2}\setminus\{0\}\) and set
\[
        \rho:=\frac{|x_0|}{4},\qquad
        U(y):=u(x_0+\rho y),\quad |y|<1.
\]
Then \(B_\rho(x_0)\subset B_1 \setminus\{0\}\) and
\(U\) satisfies
\[
    -\Delta_y U
    +\frac{\rho}{2}(x_0+\rho y)\cdot\nabla_y U
    +\frac{\rho^2}{q-1}U
    =
    \rho^2 U^q
    \qquad\text{in }B_1.
\]
The standard interior gradient estimate gives
$
        |\nabla_y U(0)|\le C.
$
Returning to the original variables, we obtain
$
        |x_0|\,|\nabla u(x_0)|
        \le C.
$
Thus
\begin{equation}\label{eq:xgrad-bound-subcritical}
        |x|\,|\nabla u(x)|\le C
        \qquad\text{near the origin}.
\end{equation}
Set
\[
        F(x)
        :=
        \frac12 x\cdot\nabla u(x)
        +\frac{1}{q-1}u(x)
        -u(x)^q,
        \qquad x\ne0.
\]
Then \(F\in L_{\mathrm{loc}}^\infty (B_1)\).  By Lemma  \ref{lem:local-integrability-distribution}, the interior $W^{2,p}$ estimate and the Schauder estimate, we deduce that $u \in C^{2}_{\rm loc}(B_1)$. This completes the proof of Lemma \ref{lem:zero-removable}.
\end{proof}

Next, we establish a monotonicity formula for the classification of isolated singularities. This is inspired by \cite{GKSAPDE,YangCVPDE20}. Let
$u\in C^2(B_1\setminus\{0\})$ be a nonnegative solution of
\eqref{eq:main}. We introduce the cylindrical variables
\begin{equation}\label{eq:cylindrical-coordinates}
t = \ln r, \quad  r=|x|, \quad \theta = \frac{x}{|x|} \in \mathbb S^{n-1},
\end{equation}
and set
\begin{equation}\label{eq:cylindrical-rescaling}
    w(t,\theta):=r^\frac{2}{q-1} u(r\theta).
    \qquad
\end{equation}
Then $w$ satisfies
\begin{equation*}
    -w_{tt}
    +\left(k_1+\frac12 e^{2t}\right)w_t
    +k_0w
    -w^q
    -\Delta_{\mathbb S^{n-1}} w
    =0
    \qquad
    \text{in }(-\infty,0)\times\mathbb S^{n-1},
\end{equation*}
where
\begin{equation}\label{eq:cylindrical-coefficient}
      k_0:=\frac{2}{q-1}(n-2-\frac{2}{q-1})
    \quad \mbox{and} \quad
    k_1:=\frac{4}{q-1}-(n-2).
\end{equation}
We define the energy $E(t; w)$ as
\begin{equation}\label{eq:cylindrical-energy-def}
    E(t; w):=
    \int_{\mathbb S^{n-1}}
    \left(
        -\frac12w_t^2
        +\frac{k_0}{2}w^2
        -\frac{1}{q+1}w^{q+1}
        +\frac12|\nabla_\theta w|^2
    \right)
    \,d\theta.
\end{equation}
Then the following result shows that $E(t; w)$ is monotone and has a limit as $t \to -\infty$.

\begin{lemma}\label{lem:cylindrical-energy-limit}
Let \(q>1\) and \(k_0,k_1\in\mathbb R\). Let
\(w\in C^2((-\infty,0)\times\mathbb S^{n-1})\) be a nonnegative solution of
\begin{equation}\label{eq:general-cylinder-equation}
    -w_{tt}
    +\left(k_1+\frac12e^{2t}\right)w_t
    +k_0w
    -w^q
    -\Delta_{\mathbb S^{n-1}} w
    =0
    \qquad\text{in }(-\infty,0)\times\mathbb S^{n-1}.
\end{equation}
Suppose that there exist \(T_0<0\) and \(C_0>0\) such that
\begin{equation}\label{eq:bdd}
    0\le w(t,\theta)\le C_0
    \qquad\text{on }(-\infty,T_0]\times\mathbb S^{n-1}.
\end{equation}
Then $E(t; w)$ is monotone for $t$ sufficiently negative, and the limit
\[
   \bar E:=\lim_{t\to-\infty}E(t;w)
\]
exists.
\end{lemma}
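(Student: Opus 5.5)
Compute $\frac{d}{dt}E(t;w)$, show it has a sign for $t$ very negative, and then show $E$ is bounded, so that the monotone limit exists. Multiply \eqref{eq:general-cylinder-equation} by $w_t$ and integrate over $\mathbb S^{n-1}$. Using $\int \nabla_\theta w\cdot\nabla_\theta w_t = -\int w_t\Delta_{\mathbb S^{n-1}}w$, we get
\[
\frac{d}{dt}E(t;w)=\int_{\mathbb S^{n-1}}\big(-w_tw_{tt}+k_0ww_t-w^qw_t+\nabla_\theta w\cdot\nabla_\theta w_t\big)\,d\theta
=-\Big(k_1+\tfrac12e^{2t}\Big)\int_{\mathbb S^{n-1}}w_t^2\,d\theta .
\]
If $k_1\neq0$, then $k_1+\frac12e^{2t}$ has the sign of $k_1$ for $t$ sufficiently negative. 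Hence $E$ is monotone there: nonincreasing if $k_1>0$, nondecreasing if $k_1<0$. If $k_1=0$, the factor is $\frac12e^{2t}>0$ and $E$ is nonincreasing for all $t$. In every case $E$ is monotone for $t\le T_1$ with some $T_1\le T_0$.

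It remains to show that $E(t;w)$ is bounded on $(-\infty,T_1]$. Then the monotone limit $\bar E$ is finite. The zeroth-order terms are controlled by \eqref{eq:bdd}, so the task reduces to a uniform bound
\[
|w_t|+|\nabla_\theta w|\le C \qquad\text{on }(-\infty,T_1]\times\mathbb S^{n-1}.
\]
I would get this from interior elliptic estimates on unit cylinders $[\tau-2,\tau+2]\times\mathbb S^{n-1}$. On these cylinders the equation is uniformly elliptic, its first-order coefficient $k_1+\frac12e^{2t}$ is bounded by $|k_1|+1$, and its right-hand side $w^q-k_0w$ is bounded by \eqref{eq:bdd}. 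The $W^{2,p}$ estimates, with $p$ large, together with Sobolev embedding, give a $C^{1,\alpha}$ bound on $[\tau-1,\tau+1]\times\mathbb S^{n-1}$. This bound depends only on $C_0,k_0,k_1,q,n$ and not on $\tau$. Equivalently, one can go back to $u$ and use the rescaled gradient estimate $|x||\nabla u|\le C|x|^{-2/(q-1)}$, obtained exactly as in the proof of Lemma~\ref{lem:zero-removable}.

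With these bounds $E$ is bounded, and a bounded monotone function has a limit as $t\to-\infty$. The main point needing care is the uniformity of the derivative bounds on the infinite cylinder. This comes from the translation-invariant structure: all coefficients are bounded uniformly in $t\le T_0$, because $e^{2t}\le1$. So I do not expect a genuine obstacle. The only other subtlety is the sign bookkeeping when $k_1<0$ (i.e.\ $q>\frac{n+2}{n-2}$), where $E$ increases rather than decreases. Monotonicity in either direction suffices for the statement.
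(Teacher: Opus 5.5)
Your proposal is correct and follows essentially the same route as the paper. You use the identity $\frac{d}{dt}E(t;w)=-\bigl(k_1+\frac12e^{2t}\bigr)\int_{\mathbb S^{n-1}}w_t^2\,d\theta$, a sign argument for $t$ very negative (your separate case $k_1=0$ is contained in the paper's case $k_1\ge0$), and boundedness of $E$ from uniform interior elliptic estimates for $w_t$ and $\nabla_\theta w$ on translated cylinders; your $W^{2,p}$-plus-Sobolev step just makes explicit what the paper calls ``standard elliptic estimates''.
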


\begin{proof}
 By the standard elliptic estimates, we have
\begin{equation}\label{eq:cylindrical-energy-boundedness-assumption}
    \sup_{(t,\theta)\in(-\infty,T_1]\times\mathbb S^{n-1}}
    \left(
        |w(t,\theta)|
        +|w_t(t,\theta)|
        +|\nabla_\theta w(t,\theta)|
    \right)<\infty
\end{equation}
for some $T_1 < T_0$. Differentiating \eqref{eq:cylindrical-energy-def} and integrating by
parts on \(\mathbb S^{n-1}\), we obtain
\[
\begin{aligned}
    \frac{d}{dt}E(t;w)
    &=
    \int_{\mathbb S^{n-1}}
    \left(
        -w_{tt}
        +k_0w
        -w^q
        -\Delta_{\mathbb S^{n-1}} w
    \right)w_t
    \,d\theta  \\
    &=
    -\left(k_1+\frac12e^{2t}\right)
    \int_{\mathbb S^{n-1}}w_t^2\,d\theta,
\end{aligned}
\]
where the last equality follows from
\eqref{eq:general-cylinder-equation}.  If \(k_1\ge0\), then
\(E(t;w)\) is nonincreasing.  If \(k_1<0\), we choose \(T\le T_1\) so
negative that
\[
    k_1+\frac12e^{2t}\le\frac{k_1}{2}<0
    \qquad\text{for }t\le T.
\]
Then \(E(t;w)\) is nondecreasing on \((-\infty,T]\).
Finally, \eqref{eq:cylindrical-energy-boundedness-assumption} implies
that \(E(t;w)\) is bounded for \(t\le T_1\).  Hence, its limit as
\(t\to-\infty\) exists and is finite.
\end{proof}

The next lemma shows that every sequence of time translations of \(w\) admits a subsequence converging to a limit independent of \(t\).

\begin{lemma}\label{lem:constant-energy-stationarity}
Under the assumptions of Lemma \ref{lem:cylindrical-energy-limit}, suppose further that \(k_1\ne0\).
Then, for every sequence \(s_j\to-\infty\), after passing to a
subsequence, there exists a nonnegative function
\(W\in C^2(\mathbb S^{n-1})\) satisfying
\begin{equation}\label{eq:limit}
    -\Delta_{\mathbb S^{n-1}} W+k_0W-W^q=0
    \qquad\text{on }\mathbb S^{n-1}
\end{equation}
such that
\[
    w(t+s_j,\theta) \to W(\theta)
    \quad\text{in }C^2_{\mathrm{loc}}
    (\mathbb R\times\mathbb S^{n-1}).
\]
Moreover,
\[
E(t;W) =\lim_{j\to\infty}E(t+s_j;w) =\bar E.
\]
\end{lemma}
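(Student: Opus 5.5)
The plan is a compactness argument combined with the energy identity from the proof of Lemma \ref{lem:cylindrical-energy-limit}. First I would obtain uniform higher regularity. By \eqref{eq:bdd} and \eqref{eq:cylindrical-energy-boundedness-assumption}, $w$, $w_t$ and $\nabla_\theta w$ are bounded on $(-\infty,T_1]\times\mathbb S^{n-1}$. On this set the coefficient $k_1+\frac12e^{2t}$ is bounded together with all its derivatives. Interior Schauder estimates applied to \eqref{eq:general-cylinder-equation} on unit cylinders $(t-1,t+1)\times\mathbb S^{n-1}$ therefore give a uniform $C^{2,\alpha}$ bound on $(-\infty,T_1-1]\times\mathbb S^{n-1}$. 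Now set $w_j(t,\theta):=w(t+s_j,\theta)$. Each $w_j$ solves the same equation with coefficient $k_1+\frac12e^{2(t+s_j)}$, which tends to $k_1$ in $C^k_{\mathrm{loc}}(\mathbb R)$. By Arzel\`a--Ascoli and a diagonal argument, a subsequence converges in $C^2_{\mathrm{loc}}(\mathbb R\times\mathbb S^{n-1})$ to a nonnegative bounded $W_\infty(t,\theta)$. This limit solves
\[
-\partial_{tt}W_\infty+k_1\partial_tW_\infty+k_0W_\infty-W_\infty^q-\Delta_{\mathbb S^{n-1}}W_\infty=0\qquad\text{in }\mathbb R\times\mathbb S^{n-1}.
\]

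The key step is to show that $W_\infty$ is independent of $t$, and this is where $k_1\neq0$ enters. Integrate the derivative formula from the proof of Lemma \ref{lem:cylindrical-energy-limit} over $[a,b]\subset(-\infty,T]$:
\[
E(a;w)-E(b;w)=\int_a^b\Bigl(k_1+\tfrac12e^{2t}\Bigr)\int_{\mathbb S^{n-1}}w_t^2\,d\theta\,dt .
\]
For $t\le T$ the weight has a fixed sign and absolute value at least $|k_1|/2$. Since $E(\cdot;w)$ has a finite limit $\bar E$ as $t\to-\infty$, it follows that
\[
\int_{-\infty}^{T}\int_{\mathbb S^{n-1}}w_t^2\,d\theta\,dt\le\frac{2}{|k_1|}\,\bigl|\bar E-E(T;w)\bigr|<\infty .
\]
Hence for any fixed $L>0$,
\[
\int_{-L}^{L}\int_{\mathbb S^{n-1}}(\partial_tw_j)^2\,d\theta\,dt=\int_{s_j-L}^{s_j+L}\int_{\mathbb S^{n-1}}w_t^2\to0 .
\]
By the $C^2_{\mathrm{loc}}$ convergence, $\partial_tW_\infty\equiv0$, so $W_\infty(t,\theta)=W(\theta)$. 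The terms $\partial_{tt}W_\infty$ and $\partial_tW_\infty$ then vanish, and the limit equation reduces to \eqref{eq:limit}. Regularity $W\in C^2(\mathbb S^{n-1})$ follows from the $C^2$ convergence.

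Finally, the energy identity follows directly. $E(t;\cdot)$ involves only $w$, $w_t$ and $\nabla_\theta w$ integrated over the compact sphere, so the $C^1$ convergence on $\{t\}\times\mathbb S^{n-1}$ gives $E(t+s_j;w)=E(t;w_j)\to E(t;W)$. Because $t+s_j\to-\infty$, Lemma \ref{lem:cylindrical-energy-limit} shows the left side tends to $\bar E$. Thus $E(t;W)=\bar E$; this is also consistent with $W$ being $t$-independent, so that $E(t;W)$ does not depend on $t$.

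I expect the main obstacle to be the uniform-estimate step rather than stationarity. One must ensure that elliptic regularity on the cylinder holds uniformly despite the time-dependent drift coefficient. The point is that $e^{2t}$ is bounded with bounded derivatives for $t\le0$, so the estimates are standard.
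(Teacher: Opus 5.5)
Your proof is correct. It follows the paper's overall scheme: uniform estimates, translation compactness, and the energy identity with $k_1\neq0$. The one genuine difference is how you prove that the limit is stationary.

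\textbf{The paper's route.} The paper first passes the energy to the limit, obtaining $E(t;W)=\bar E$ for every $t$, so the limit trajectory has constant energy. It then applies the dissipation identity to the autonomous limit equation,
$\frac{d}{dt}E(t;W)=-k_1\int_{\mathbb S^{n-1}}W_t^2\,d\theta$,
and concludes $W_t\equiv0$ from $k_1\neq0$. This is a LaSalle-type invariance argument.

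\textbf{Your route.} You integrate the dissipation identity for the original solution $w$. Using the existence of $\bar E$ and the bound $|k_1+\frac12e^{2t}|\ge|k_1|/2$ on the tail, you get
$\int_{-\infty}^{T}\int_{\mathbb S^{n-1}}w_t^2\,d\theta\,dt<\infty$.
This estimate directly forces $\partial_t$ of every translation limit to vanish. The equality $E(t;W)=\bar E$ then becomes a separate, immediate consequence of the $C^1$ convergence on time slices.

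\textbf{What each buys.} The paper's argument is slightly shorter: it reuses the energy identity verbatim on the limit, where the weight is the constant $k_1$. It does, however, rely on the limit problem being itself a gradient-like system. Your argument never uses the structure of the limit equation to get stationarity. It also yields a quantitative space-time $L^2$ bound on $w_t$ along the whole tail. That is the same kind of estimate proved in Lemma \ref{lem:energy-estimates}, and it is the natural starting point for the Simon-type convergence used in Lemma \ref{lem:supercritical-singular-convergence}.
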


\begin{proof}
Let
\[
    w_j(t,\theta):=w(t+s_j,\theta).
\]
Then, by the Schauder estimates, after passing to a subsequence, we have
\[
    w_j \to  W
    \quad\text{in }C^2_{\mathrm{loc}}
    (\mathbb R\times\mathbb S^{n-1})
\]
for some nonnegative function \(W \in C^2_{\mathrm{loc}}
    (\mathbb R\times\mathbb S^{n-1})\).  Since
\[
    -(w_j)_{tt}
    +\left(k_1+\frac12e^{2(t+s_j)}\right)(w_j)_t
    +k_0w_j
    -w_j^q
    -\Delta_{\mathbb S^{n-1}} w_j
    =0,
\]
and \(e^{2(t+s_j)}\to0\) locally uniformly,  passing to the limit gives
\begin{equation}\label{eq:limit2}
      -W_{tt}+k_1W_t+k_0W-W^q-\Delta_{\mathbb S^{n-1}} W=0
    \qquad\text{in }\mathbb R\times\mathbb S^{n-1}.
\end{equation}
For every fixed \(t\in\mathbb R\), Lemma~\ref{lem:cylindrical-energy-limit} yields
\[
    E(t;W)
    =\lim_{j\to\infty}E(t;w_j)
    =\lim_{j\to\infty}E(t+s_j;w)
    =\bar E.
\]
Thus, while $W$ may depend on the chosen subsequence \(s_j\to-\infty\), its energy $E(t; W)$ is independent of this choice and is determined solely by the original solution \(w\). Hence, \(E(t;W)=\bar E\) is constant with respect to $t$. From the computation in the proof of
Lemma~\ref{lem:cylindrical-energy-limit}, we obtain
\[
    0 \equiv \frac{d}{dt}E(t;W)
    =-k_1\int_{\mathbb S^{n-1}}W_t^2\,d\theta.
\]
Since \(k_1\ne0\), it follows that \(W_t\equiv0\).  Consequently, $W$ is independent of $t$, and we may write \(W=W(\theta)\). Lemma \ref{lem:constant-energy-stationarity} is proved.
\end{proof}

\section{Proof of Theorem~\ref{thm:isolated-singularity}}\label{sec:subcritical}
In this section, we prove Theorem \ref{thm:isolated-singularity}. First, we establish the following upper bound estimate by using local blow-up analysis and a Liouville theorem.
\begin{lemma}\label{lem:first-apriori}
Let $1<q<\frac{n+2}{n-2}$ and $u\in C^2(B_1\setminus\{0\})$ be a nonnegative solution of \eqref{eq:main}. Then there exists a constant $C=C(n,q)>0$ such that
\begin{equation}\label{eq:first-apriori}
    u(x)\leq C |x|^{-\frac{2}{q-1}}
    \qquad \text{for  } 0<|x|\leq \frac12.
\end{equation}
\end{lemma}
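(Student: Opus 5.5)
We argue by contradiction, combining a doubling lemma with blow-up and the Gidas--Spruck Liouville theorem. Suppose there are points $x_k\in B_{1/2}\setminus\{0\}$ with
\[
M_k(x_k):=u(x_k)^{\frac{q-1}{2}}\ >\ 2k\,|x_k|^{-1}.
\]
Here we use the notation $M(x):=u(x)^{\frac{q-1}{2}}$, so that $M$ is the natural length$^{-1}$ scale of the equation. We then apply the Polá\v{c}ik--Quittner--Souplet doubling lemma. It is applied in the domain $D=B_{3/4}\setminus\{0\}$ with $\Gamma=\partial D$, so that $\operatorname{dist}(x,\Gamma)\le |x|$ for $x$ near $0$. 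The lemma produces new points $y_k\in D$ with the following properties:
\[
M(y_k)\ge M(x_k), \qquad M(y_k)\,\operatorname{dist}(y_k,\Gamma)>2k,
\]
\[
M(z)\le 2M(y_k)\quad\text{for all } |z-y_k|\le k\,M(y_k)^{-1}.
\]
Since $u$ is continuous away from the origin and $x_k\in B_{1/2}$, the second property forces $M(y_k)\to\infty$, and it also forces the ball $B_{kM(y_k)^{-1}}(y_k)$ to stay inside $D$.

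Next we rescale. Set $\lambda_k:=M(y_k)^{-1}\to 0$ and
\[
v_k(z):=\lambda_k^{\frac{2}{q-1}}\,u(y_k+\lambda_k z),\qquad |z|\le k .
\]
Then $v_k(0)=1$ and $0\le v_k\le 2^{\frac{2}{q-1}}$ on $B_k$. The rescaled functions satisfy
\[
-\Delta v_k+\frac{\lambda_k}{2}(y_k+\lambda_k z)\cdot\nabla v_k+\frac{\lambda_k^2}{q-1}v_k-v_k^q=0 .
\]
Because $|y_k+\lambda_k z|\le 1$, the drift coefficient is $O(\lambda_k)$ and the zeroth-order coefficient is $O(\lambda_k^2)$. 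Both therefore tend to $0$ uniformly on compacts. This is the one place where the drift must be checked, and it is harmless precisely because we stay in a bounded region and use the scale $\lambda_k\to0$.

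We now pass to the limit. Interior $W^{2,p}$ and Schauder estimates give uniform $C^{2,\alpha}_{\rm loc}$ bounds on $v_k$. After extracting a subsequence, $v_k\to v$ in $C^2_{\rm loc}(\mathbb R^n)$, where $v\ge0$, $v(0)=1$, $v$ is bounded, and
\[
-\Delta v=v^q\quad\text{in }\mathbb R^n .
\]
For $1<q<\frac{n+2}{n-2}$, the Gidas--Spruck Liouville theorem says any such nonnegative solution vanishes identically. This contradicts $v(0)=1$. Hence there is a $k$, depending only on $n$ and $q$, with $u^{\frac{q-1}{2}}(x)\le 2k|x|^{-1}$ on $B_{1/2}\setminus\{0\}$, which is the estimate \eqref{eq:first-apriori}.

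I expect the main obstacle to be obtaining a constant $C$ that depends only on $(n,q)$, rather than on $u$. This is exactly what the doubling lemma provides. The argument must be phrased as a contradiction over a sequence of solutions $u_k$: the estimates above use only the equation, never any bound on a particular $u$. Consequently, the same argument applied to a sequence $u_k$ and points $x_k$ yields the universal constant.
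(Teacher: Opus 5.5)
Your proposal is essentially the paper's proof. Both argue by contradiction over a sequence of solutions $u_k$, apply the Pol\'a\v{c}ik--Quittner--Souplet doubling lemma, rescale by $\lambda_k=M_k(y_k)^{-1}$ so that the drift and zeroth-order terms become $O(\lambda_k)$, and conclude with the Gidas--Spruck Liouville theorem.

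There is one small slip in how you invoke the doubling lemma. Its hypothesis is $M(x_k)\operatorname{dist}(x_k,\Gamma)>2k$, and that requires a \emph{lower} bound on $\operatorname{dist}(x_k,\Gamma)$, not the upper bound $\operatorname{dist}(x,\Gamma)\le|x|$ you state. With $D=B_{3/4}\setminus\{0\}$ you only have $\operatorname{dist}(x_k,\Gamma)\ge|x_k|/2$ on $B_{1/2}$, so adjust the factor, for instance by assuming $M(x_k)|x_k|>4k$. Alternatively, take $\Gamma=\{0\}\cup\partial B_1$ as the paper does, where $\operatorname{dist}(x_k,\Gamma)=|x_k|$ exactly.
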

\begin{proof}
By the maximum principle, we may assume that $u>0$ in $B_1\setminus\{0\}$. Suppose, to the contrary, that \eqref{eq:first-apriori} fails. Then there exist nonnegative solutions
$u_k$ of \eqref{eq:main} and points
$x_k\in B_{1/2}\setminus\{0\}$ such that
\begin{equation}\label{eq:mkxk-large}
    M_k(x_k)|x_k|>2k,
\end{equation}
where $M_k(x):=u_k(x)^{\frac{q-1}{2}}$.
We apply the doubling lemma in Pol\'a\v{c}ik-Quittner-Souplet \cite[Lemma 5.1]{MR2350853} in the complete
metric space $X:=\overline B_1$, with
\[
    \Gamma:=\{0\}\cup\partial B_1
    \quad \mbox{and}  \quad
    D:=X\setminus\Gamma=B_1\setminus\{0\}.
\]
Since $x_k\in B_{1/2}\setminus\{0\}$, we have
\[
    d(x_k,\Gamma)=|x_k|.
\]
Therefore \eqref{eq:mkxk-large} gives
\[
    M_k(x_k)d(x_k,\Gamma)>2k.
\]
By the doubling lemma, there exist points
$y_k\in B_1\setminus\{0\}$ such that
\begin{equation}\label{eq:doubling-points}
    M_k(y_k)d(y_k,\Gamma)>2k,
    \quad
    M_k(y_k)\geq M_k(x_k),
\end{equation}
and
\begin{equation*}
    M_k(z)\leq 2M_k(y_k)
    \quad
    \text{for all }
    z\in B_{k/M_k(y_k)}(y_k).
\end{equation*}
Since $\frac{k}{M_k(y_k)}<\frac12 d(y_k,\Gamma)$,
the ball $B_{k/M_k(y_k)}(y_k)$ is compactly contained in
$B_1\setminus\{0\}$. Hence, we obtain
\begin{equation*}
    u_k(z)\leq 2^{\frac{2}{q-1}}u_k(y_k)
    \quad
    \text{for all }
    z\in B_{k/M_k(y_k)}(y_k).
\end{equation*}
Let
\[
    \lambda_k:=M_k(y_k)^{-1}.
\]
By \eqref{eq:doubling-points}, we have $ \lambda_k\to 0$. Define the rescaled functions
\begin{equation*}
    v_k(x):=\lambda_k^{\frac{2}{q-1}}
    u_k(y_k+\lambda_k x),
    \quad x\in B_k.
\end{equation*}
Then $v_k(0)=1$ and $0\leq v_k\leq 2^{\frac{2}{q-1}}$ in $B_k$. Moreover, $v_k$ satisfies
\begin{equation*}
    -\Delta v_k
    +\frac12\lambda_k(y_k+\lambda_k x)\cdot\nabla v_k
    +\frac{\lambda_k^2}{q-1}v_k
    -v_k^q=0
    \quad \text{in }B_k.
\end{equation*}
By the standard interior elliptic estimates, after passing to a subsequence,
\[
    v_k\to v
    \quad \text{in } C^2_{\mathrm{loc}}(\mathbb R^n),
\]
where $v \in C^2_{\mathrm{loc}}(\mathbb R^n)$ is a nonnegative entire solution of
\begin{equation}\label{eq:liouville-limit}
    -\Delta v=v^q
    \quad \text{in }\mathbb R^n.
\end{equation}
Moreover, $v(0)=1$. This contradicts the Liouville theorem of Gidas-Spruck \cite{MR615628}. Thus, the estimate \eqref{eq:first-apriori} is proved.
\end{proof}

We now apply the cylindrical change of variables introduced in \eqref{eq:cylindrical-coordinates} and \eqref{eq:cylindrical-rescaling}. In the current case $\frac{n}{n-2} < q < \frac{n+2}{n-2}$, the coefficients given by \eqref{eq:cylindrical-coefficient} satisfy \(k_0>0\) and
\(k_1>0\).

\begin{lemma}\label{lem:asymptotic-alternative}
Let $\frac{n}{n-2}<q<\frac{n+2}{n-2}$ and $u\in C^2(B_1\setminus\{0\})$ be a nonnegative solution of
\eqref{eq:main}.
Then the following alternative holds:
\begin{equation*}
\lim_{|x|\to0}|x|^{\frac{2}{q-1}}u(x)
    =
    0
    ~~ \text{or} ~~\lambda_{n,q},
\end{equation*}
where
 \[
 \lambda_{n,q} :=\left[ \frac{2}{q-1} \left(n-2-\frac{2}{q-1}\right) \right]^{\frac{1}{q-1}}.
 \]
\end{lemma}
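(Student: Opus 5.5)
We pass to the cylindrical variables \eqref{eq:cylindrical-coordinates}--\eqref{eq:cylindrical-rescaling}. By Lemma \ref{lem:first-apriori}, $w(t,\theta)=r^{\frac{2}{q-1}}u(r\theta)$ satisfies $0\le w\le C$ on $(-\infty,\ln\frac12]\times\mathbb S^{n-1}$. Hence the hypotheses of Lemma \ref{lem:cylindrical-energy-limit} hold. Since $\frac{n}{n-2}<q<\frac{n+2}{n-2}$, we have $k_0>0$ and $k_1>0$; in particular $k_1\ne0$, so Lemma \ref{lem:constant-energy-stationarity} also applies. It gives that for every sequence $s_j\to-\infty$ some subsequence of $w(\cdot+s_j,\cdot)$ converges in $C^2_{\mathrm{loc}}$ to a $t$-independent nonnegative solution $W$ of \eqref{eq:limit} on $\mathbb S^{n-1}$, with $E(t;W)=\bar E$.

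The next step is to classify the nonnegative solutions of \eqref{eq:limit}. For $1<q<\frac{n+2}{n-2}$, the classical result of Gidas--Spruck (the same one used for the Lane--Emden equation, via the integral identities of \cite{MR615628}) shows that these are exactly $W\equiv0$ and $W\equiv k_0^{1/(q-1)}=\lambda_{n,q}$. The reason is that $\lambda_{n,q}|x|^{-2/(q-1)}$ and $W(\theta)|x|^{-2/(q-1)}$ are both singular solutions of $-\Delta v=v^q$, and positive solutions of \eqref{eq:limit} in this range are constant. We may cite this directly. Note also that $W$ is either $\equiv0$ or positive, by the strong maximum principle on the sphere.

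The value $\bar E$ then pins down which constant occurs. We have $E(t;0)=0$, while $E(t;\lambda_{n,q})=|\mathbb S^{n-1}|\,\lambda_{n,q}^{q+1}\bigl(\frac12-\frac1{q+1}\bigr)>0$. Since every subsequential limit has energy equal to the same $\bar E$, all subsequential limits coincide: they are either all $0$ or all $\lambda_{n,q}$.

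Finally, $w(t,\cdot)$ is uniformly bounded in $C^2$ for $t\le T_1$, and every sequence $s_j\to-\infty$ has a subsequence along which $w(s_j,\cdot)$ converges uniformly on $\mathbb S^{n-1}$ to the same constant $\ell\in\{0,\lambda_{n,q}\}$. By the standard subsequence argument, $w(t,\theta)\to\ell$ uniformly in $\theta$ as $t\to-\infty$, which is the claimed limit $|x|^{\frac{2}{q-1}}u(x)\to\ell$. The only nonroutine ingredient is the Gidas--Spruck classification of positive solutions on the sphere, which I would quote rather than reprove. The energy-separation step is the key point that upgrades subsequential convergence to full convergence, and it rests on the compactness and $C^2$ bounds already built into Lemmas \ref{lem:cylindrical-energy-limit} and \ref{lem:constant-energy-stationarity}.
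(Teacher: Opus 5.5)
Your proposal is correct and follows essentially the same route as the paper: the a priori bound from Lemma \ref{lem:first-apriori}, subsequential convergence via Lemmas \ref{lem:cylindrical-energy-limit}--\ref{lem:constant-energy-stationarity} (using $k_1>0$), classification of the limit as $0$ or $\lambda_{n,q}$, and energy separation $E(t;0)=0<E(t;\lambda_{n,q})$ to upgrade subsequential convergence to full uniform convergence. The only difference is where the classification comes from: you quote a Gidas--Spruck Liouville theorem directly for the equation on $\mathbb S^{n-1}$, whereas the paper lifts $W$ to $U=|x|^{-\frac{2}{q-1}}W(x/|x|)$ and applies the radial symmetry result of Caffarelli--Gidas--Spruck for singular solutions of $-\Delta U=U^q$ in $\mathbb R^n\setminus\{0\}$, and either reference supplies the needed fact in this range.
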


\begin{proof}
Lemma~\ref{lem:first-apriori} gives
\[
    0\le w(t,\theta)\le C(n,q)
    \qquad\text{for }t\le-\ln2.
\]
By Lemma~\ref{lem:cylindrical-energy-limit}, the limit
$
    \bar E:=\lim\limits_{t\to-\infty}E(t;w)
$
exists and is finite. Let \(s_j\to-\infty\) be arbitrary.  It follows from Lemma \ref{lem:constant-energy-stationarity} that, after passing
to a subsequence, there exists a nonnegative function
\(W\in C^2(\mathbb S^{n-1})\) such that
\[
    w(t+s_j,\theta) \to W(\theta)
    \quad \text{in }C^2_{\mathrm{loc}}
    (\mathbb R\times\mathbb S^{n-1}).
\]
Moreover, \(E(t;W)\equiv \bar E\).  Define
\[
    U(x):=|x|^{-\frac{2}{q-1}}
    W\!\left(\frac{x}{|x|}\right),
    \qquad x\in\mathbb R^n\setminus\{0\}.
\]
By \eqref{eq:limit}, $U$ satisfies
\[
    -\Delta U=U^q
    \qquad\text{in }\mathbb R^n\setminus\{0\}.
\]
By Caffarelli-Gidas-Spruck \cite[Theorem 8.1 and Corollary 8.2]{CGS}, either \(U\equiv0\) or
\(U\) is radially symmetric with respect to the origin. It follows that $W$ is a nonnegative constant, denoted by $\ell$. From \eqref{eq:limit}, we then obtain $\ell\in\{0,\lambda_{n,q}\}$.

It remains to prove that \(w(t,\theta)\) converges uniformly to \(\ell\) as \(t\to-\infty\).  Suppose otherwise.  Then there exist
\(\varepsilon_0>0\), a sequence \(s_j\to-\infty\), and points
\(\theta_j\in\mathbb S^{n-1}\) such that
\[
    |w(s_j,\theta_j)-\ell|\ge\varepsilon_0.
\]
Applying Lemma \ref{lem:constant-energy-stationarity} to this sequence
and repeating the argument above, after passing to a subsequence we
obtain some \(\widetilde\ell\in\{0,\lambda_{n,q}\}\) with $\widetilde\ell \not= \ell$ such that
\[
    w(t+s_j,\theta)\to\widetilde\ell
    \quad\text{in }C^2_{\mathrm{loc}}
    (\mathbb R\times\mathbb S^{n-1}).
\]
Since \(\bar E\) depends only on $w$ and is independent of the chosen sequence \(s_j\to-\infty\), we have
\[
    E(t;\ell)=E(t;\widetilde\ell)=\bar E.
\]
However,
\[
    E(t;0)\equiv 0
    \quad \mbox{and}  \quad
    E(t;\lambda_{n,q})
   \equiv
    \frac{(q-1)|\mathbb S^{n-1}|\lambda_{n,q}^{q+1}}{2(q+1)}
    >0.
\]
Hence, we have \(\widetilde\ell=\ell\). This is a contradiction. Consequently,
\[
\lim_{|x|\to0}|x|^{\frac{2}{q-1}}u(x)
=\ell\in\{0,\lambda_{n,q}\}.
\]
This finishes the proof of Lemma \ref{lem:asymptotic-alternative}.
\end{proof}

\begin{proof}[Proof of Theorem \ref{thm:isolated-singularity}]
This is an immediate consequence of Lemmas \ref{lem:asymptotic-alternative} and \ref{lem:zero-removable}.
\end{proof}

\section{Proof of Theorem~\ref{thm:main2}}\label{sec:borderline}
In this section, we prove Theorem \ref{thm:main2} with the assumption that
$
    q=\frac{n}{n-2}.
$
Define the spherical average of a function $f$ by
\[
    \bar f(r)
    :=
    \frac{1}{|\mathbb{S}^{n-1}|}
    \int_{\mathbb{S}^{n-1}}f(r\theta)\,d\theta.
\]
First, we establish the following upper bound estimate, which improves the result in Lemma  \ref{lem:first-apriori} when $q=\frac{n}{n-2}$.

\begin{lemma}\label{lem:spherical-average}
Let $q=\frac{n}{n-2}$ and \(u\in C^2(B_1\setminus\{0\})\) be a nonnegative solution of
\eqref{eq:main}. Then there exist constants \(C=C(n)>0\) and \(T_0=T_0(u)>0\)
such that
\begin{equation}\label{eq:pointwise-log-upper}
    u(x)\le C |x|^{2-n}(-\ln |x|)^{-\frac{n-2}{2}},
    \qquad 0<|x|<e^{-T_0}.
\end{equation}
\end{lemma}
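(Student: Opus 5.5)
Our plan follows Aviles' argument for the Lane--Emden equation: first an ODE analysis of the spherical average $\bar u$, then a Harnack inequality to pass from $\bar u$ to $u$ itself.

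\emph{Step 1: Harnack on dyadic annuli.} By Lemma~\ref{lem:first-apriori}, $u\le C|x|^{-2/(q-1)}=C|x|^{2-n}$, since $\frac{2}{q-1}=n-2$ when $q=\frac{n}{n-2}$. Hence the potential satisfies $u^{q-1}\le C|x|^{-2}$. Rescale the annulus $\{r/2<|x|<2r\}$ to unit size. The drift and zeroth-order coefficients become $O(r^2)$, and the potential term becomes bounded. The Harnack inequality then gives
\[
\max_{|x|=r}u\le C\min_{|x|=r}u\le C\bar u(r),
\]
with $C=C(n)$. It therefore suffices to prove the stated bound for $\bar u$.

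\emph{Step 2: ODE inequality for the average.} Averaging \eqref{eq:main} over spheres and using Jensen's inequality $\overline{u^q}\ge\bar u^q$ gives
\[
-\bar u''-\frac{n-1}{r}\bar u'+\frac r2\bar u'+\frac{n-2}{2}\bar u\ge\bar u^q .
\]
Pass to Emden--Fowler variables $t=-\ln r\to+\infty$ and $v(t)=r^{n-2}\bar u(r)$. The exponent $n-2$ kills both the linear $k_0$ term and the first-order $k_1$ term, since $k_0=k_1=0$ at $q=\frac{n}{n-2}$. One is left with
\[
-v''\ \ge\ v^q+O(e^{-2t})(|v'|+v),
\]
where the error terms come from the drift and the $\frac{n-2}{2}u$ term. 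By Step 1 and the scaled gradient estimates, $v$ and $v'$ are bounded. So $v$ is bounded, positive, and essentially concave as $t\to\infty$.

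\emph{Step 3: the decay rate.} We aim to show $v(t)\le Ct^{-(n-2)/2}=Ct^{-1/(q-1)}$, which is exactly \eqref{eq:pointwise-log-upper}. First, concavity up to integrable errors and boundedness force $v'(t)\to0$ with $v'\ge -o(1)$. Integrating $-v''\ge v^q-Ce^{-2t}$ from $t$ to $\infty$ then yields
\[
v'(t)\le -\tfrac12\int_t^\infty v^q\,ds+Ce^{-2t}.
\]
Since $v$ is almost nonincreasing, $\int_t^{2t}v^q\ge t\,v(2t)^q$ up to errors. Combined with $|v'|$ being small, this gives $v(t)-v(2t)\gtrsim t^2 v(2t)^q$, while the left side is bounded by $v(t)$. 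A cleaner route is to multiply by $v'\le0$ to get
\[
\tfrac12(v')^2\ge \tfrac{1}{q+1}v^{q+1}-\text{error},
\]
so that $-v'\ge c\,v^{(q+1)/2}$ wherever $v$ dominates the exponential errors. Integrating gives $v\le Ct^{-2/(q-1)}$. That rate is even better than needed, which suggests the energy sign requires care.

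So we will instead argue directly from $v(t)\ge t\int_t^\infty v^q$, which follows from $v(t)=\int_t^\infty(-v')\ge\int_t^\infty\int_s^\infty v^q$. Together with the monotonicity of $v$, this gives $v(t)\ge c\,t^2v(2t)^q$. Iterating this relation yields $v(t)\le Ct^{-2/(q-1)}$, which is at least as strong as the target $Ct^{-(n-2)/2}$. The constant should depend only on $n$, because the inequality has a universal form once $t\ge T_0(u)$ is large enough that the exponential errors are negligible.

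\emph{Main obstacle.} The hardest part is controlling the $O(e^{-2t})$ perturbation terms in the regime where $v$ itself may be exponentially small. We will handle this by a case split. If $v(t)\le e^{-t}$ along a sequence, the desired bound holds trivially there. Otherwise $v$ dominates the errors and the argument above runs. Checking that this dichotomy is compatible with the integral inequalities, and getting a constant depending only on $n$, is the delicate point.
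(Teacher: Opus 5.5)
\textbf{There is a genuine gap in Step~2, and all of Step~3 rests on it.} At $q=\frac{n}{n-2}$ you have $\frac{2}{q-1}=n-2$, so indeed $k_0=0$. But $k_1=\frac{4}{q-1}-(n-2)=n-2\neq 0$; the first-order coefficient vanishes only at the Sobolev exponent $\frac{n+2}{n-2}$.

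If you carry out your substitution $t=-\ln r$, $v=r^{n-2}\bar u$ correctly, the drift and the term $\frac{n-2}{2}\bar u$ cancel exactly at zeroth order. The Laplacian, however, leaves an order-one damping term:
\[
v''+\Bigl((n-2)+\tfrac12e^{-2t}\Bigr)v'+v^{\frac{n}{n-2}}\le 0 .
\]
Since $v'\le 0$ eventually, the term $(n-2)v'$ has the wrong sign to be discarded. So $-v''\ge v^q$ does not follow, and $v$ need not be concave: the profile $c\,t^{-(n-2)/2}$ is convex. Likewise $v(t)\ge t\int_t^\infty v^q\,ds$ fails for that profile, since the right side grows like $t$ times the left.

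The rate you obtain, $t^{-2/(q-1)}=t^{-(n-2)}$, is what the second-order balance $-v''\approx v^q$ dictates. The true balance is first order, $-(n-2)v'\approx v^q$. It gives $t^{-(n-2)/2}$ with exactly the constant $\bigl(\frac{n-2}{\sqrt2}\bigr)^{n-2}$ of Theorem~\ref{thm:main2}. Your bound would force the limit in Theorem~\ref{thm:main2} to be $0$ for every solution. That is the inconsistency you sensed when the rate came out ``better than needed.''

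\textbf{What is missing is a way to treat the damped equation.} Use the integrating factor $Y=e^{A(t)}v'$ with $A'=(n-2)+\frac12e^{-2t}$, so that $Y'\le -e^{A}v^{q}<0$.
\begin{enumerate}
\item This gives $v'<0$ eventually: otherwise $v\ge c>0$ and $Y\to-\infty$.
\item It also gives $v\downarrow 0$: if $v\ge c>0$, then $v'=e^{-A}Y\le -c_0<0$ for large $t$.
\item For $t\ge T_\delta$ you have $(e^{(n-2+\delta)t}v')'\le -e^{(n-2+\delta)t}v^q$. Integrate this from a fixed $T_\delta$ up to $t$. Then use monotonicity in the direction $v(s)\ge v(t)$ for $s\le t$, integrating from the past rather than toward $+\infty$ as you did. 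This yields $-v'(t)\ge\bigl(\frac{1}{n-2+\delta}-o(1)\bigr)v(t)^{q}$.
\item Hence $\frac{d}{dt}v^{-2/(n-2)}\ge c>0$, and so $v\le C t^{-(n-2)/2}$.
\end{enumerate}
In this form the $e^{-2t}$ term only enters the integrating factor, and there is no $e^{-2t}v$ term at all. So the case split you identified as the main obstacle is unnecessary.

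Your Step~1 is correct and is how the paper concludes: Harnack on annuli via Lemma~\ref{lem:first-apriori} gives $\max_{|x|=r}u\le C\,\bar u(r)$ with $C=C(n)$.
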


\begin{proof}
The spherical average equation and Jensen's inequality give
\begin{equation*}
    \bar u''
    +\left(\frac{n-1}{r}-\frac r2\right)\bar u'
    -\frac{n-2}{2}\bar u
    +\bar u^{\frac{n}{n-2}}
    \le0.
\end{equation*}
Let
\[
   w(t):=|x|^{n-2}\bar u(|x|),
    \quad
    t = -\ln |x|.
\]
A direct calculation yields
\begin{equation}\label{eq:w-ineq}
    w_{tt}
    +
    \left((n-2)+\frac12e^{-2t}\right)w_t
    +
    w^{\frac{n}{n-2}}
    \le0.
\end{equation}
Set
$
    a(t):=(n-2)+\frac12e^{-2t}.
$
Let
\[
    A(t):=\int_{T}^{t}a(s)\,d s
    \quad  \text{and}  \quad
    Y(t):=e^{A(t)}w_t(t),
\]
where \(T>0\) is fixed.  From \eqref{eq:w-ineq}, we have
\begin{equation}\label{eq:Yprime}
    Y'(t)
    =
    e^{A(t)}\bigl(w_{tt}+a(t)w_t\bigr)
    \le
    -e^{A(t)}w(t)^{\frac{n}{n-2}}
    <0.
\end{equation}
We claim that \(w_t (t)<0\) for all sufficiently large \(t\).  Indeed, if
\(w_t(t_1)<0\) for some \(t_1\), then \(Y(t_1)<0\). Since \(Y\) is
decreasing, it follows that
\[
    w_t(t)=e^{-A(t)}Y(t)<0,\quad t\ge t_1.
\]
Thus, we only need to prove that \(w_t<0\) at some large point. Suppose otherwise that
\(w_t(t)\ge0\) for all large \(t\). Then
\(w(t)\ge c>0\) for all large \(t\).  From \eqref{eq:Yprime}, we have
\begin{equation}\label{eq:39}
  Y(t)\le Y(T)-c^{\frac{n}{n-2}}\int_T^t e^{A(s)}\,d s.
\end{equation}
Since \(A(t)=(n-2)t+O(1)\), the right-hand side becomes negative for large
\(t\), and hence \(w_t(t)<0\), a contradiction. Hence, we have \(w_t(t)<0\) for all large \(t\).  It follows that
$
    \ell:=\lim_{t\to\infty}w(t)
$
exists and \(\ell\ge0\).  We prove that \(\ell=0\).  Assume to the contrary that
\(\ell>0\).  Then \(w(t)\ge c>0\) for all large \(t\). Following the same argument as in \eqref{eq:39}, we obtain
\[
    w_t(t)
    =
    e^{-A(t)}Y(t)
    \le
    e^{-A(t)}Y(T)
    -
    c^{\frac{n}{n-2}} e^{-A(t)}
    \int_T^t e^{A(s)}\,d s.
\]
Since
\[
    e^{-A(t)}\int_T^t e^{A(s)}\,d s \to \frac{1}{n-2},
\]
we get \(w_t(t)\le -c_0<0\) for all sufficiently large \(t\).  This would
force \(w(t)\) to become negative, which is impossible. Hence \(\ell=0\).

Let \(\delta>0\). Since \(a(t)\to n-2\), there exists \(T_\delta>0\) such
that $a(t)\le n-2+\delta $ for $t\ge T_\delta$. Hence, we have
\begin{equation*}
    -w_{tt}-(n-2+\delta)w_t\ge w^{\frac{n}{n-2}}, ~~~ t\ge T_\delta.
\end{equation*}
Multiplying by \(e^{(n-2+\delta)t}\) and integrating from \(T_\delta\) to
\(t\), we get
\[
    -w_t(t)
    \ge
    -e^{-(n-2+\delta)(t-T_\delta)}w_t(T_\delta)
    +
    \int_{T_\delta}^{t}
        e^{-(n-2+\delta)(t-s)}w(s)^{\frac{n}{n-2}}\,d s.
\]
Since \(w\) is decreasing, \(w(s)\ge w(t)\) for \(s\le t\).  Hence
\[
    -w_t(t)
    \ge
    w(t)^{\frac{n}{n-2}}
    \int_{T_\delta}^{t}
        e^{-(n-2+\delta)(t-s)}\,d s,
\]
from which we deduce that
\begin{equation}\label{eq:wt-liminf}
    \liminf_{t\to\infty}
    \frac{-w_t(t)}{w(t)^{\frac{n}{n-2}}}
    \ge
    \frac1{n-2+\delta}.
\end{equation}
Thus, for every \(\eta\in(0,1)\) and every
\(\delta>0\), there exists \(T_{\eta,\delta}\) such that
\[
    \frac{d}{d t}w(t)^{-\frac{2}{n-2}}  =
    \frac{2}{n-2}(-w_t)w^{-\frac{n}{n-2}}
    \ge \frac{2(1-\eta)}{(n-2)(n-2+\delta)},
    \quad t\ge T_{\eta,\delta}.
\]
Integrating from \(T_{\eta,\delta}\) to \(t\) gives
\[
    w(t)^{-\frac{2}{n-2}}
    \ge
    C_{\eta,\delta}
    +
   \frac{2(1-\eta)}{(n-2)(n-2+\delta)}t.
\]
Consequently,
\[
    \limsup_{t\to\infty} t^{\frac{n-2}{2}}w(t)
    \le
    \left(
        \frac{(n-2)(n-2+\delta)}{2(1-\eta)}
    \right)^{\frac{n-2}{2}}.
\]
Letting first \(\eta \to 0\) and then \(\delta \to 0\),
we conclude that
\begin{equation}\label{eq:lemma1-limsup}
    \limsup_{r \to 0}
    (-\ln r)^{\frac{n-2}{2}}r^{n-2}\bar u(r)
    \le
    \left(\frac{n-2}{\sqrt2}\right)^{n-2}.
\end{equation}

For \(0<r<r_0<\frac12\), define
\[
    V^{(r)}(y):=r^{n-2}u(ry),
    \quad y\in B_2\setminus \overline{B_{1/2}}.
\]
By Lemma~\ref{lem:first-apriori}, for \(y\in B_2\setminus \overline{B_{1/2}}\), we have $
0\le V^{(r)}(y)\le C_0 |y|^{2-n}\le C_1$, where \(C_1\) is independent of \(r\).  Moreover, \(V^{(r)}\) satisfies
\[
    -\Delta V^{(r)}+b_r(y)\cdot\nabla V^{(r)}+c_r(y)V^{(r)}=0
    \quad\text{in }B_2\setminus \overline{B_{1/2}}
\]
with
\[
    b_r(y):=\frac{r^2}{2}y
    \quad \text{and}  \quad
    c_r(y):=\frac{(n-2)r^2}{2}-V^{(r)}(y)^{\frac{2}{n-2}}.
\]
The preceding bound gives $\|b_r\|_{L^\infty(B_2\setminus \overline{B_{1/2}})}+\|c_r\|_{L^\infty(B_2\setminus \overline{B_{1/2}})}\le C_2$,
where \(C_2\) is independent of \(r\). Hence, by the Harnack inequality, there exists a constant \(C\), independent of \(r\), such that
\[
    \sup_{B_{3/2}\setminus \overline{B_{2/3}}}V^{(r)}\le C \inf_{B_{3/2}\setminus \overline{B_{2/3}}}V^{(r)}.
\]
This implies that
\[
    \sup_{|x|=r}u(x)
    \le C \inf_{|x|=r}u(r),
    \quad 0<r<r_0.
\]
This, together with \eqref{eq:lemma1-limsup}, leads to the desired conclusion.
\end{proof}

For \(t=-\ln r\), define
\begin{equation*}
    \phi(t,\theta):=r^{n-2} u(r\theta)
    \quad \text{and} \quad
    v(t,\theta):=t^{(n-2)/2}\phi(t,\theta),
    \quad \theta\in\mathbb{S}^{n-1}.
\end{equation*}
The next lemma shows that \(v(t,\cdot)\) approaches its spherical average
in \(L^2(\mathbb{S}^{n-1})\) at the rate \(t^{-1}\).

\begin{lemma}\label{lem:spherical-oscillation}
There exists a constant
\(C>0\) such that
\begin{equation*}
    \|v(t,\cdot)-\bar v(t)\|_{L^2(\mathbb{S}^{n-1})}
    \le C t^{-1} \quad   \text{for } t\ge T_0.
\end{equation*}
\end{lemma}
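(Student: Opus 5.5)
We write the equation for $\phi$ and then for $v$ in the variable $t=-\ln r$, and run an energy estimate for the oscillation $\psi:=v-\bar v$ on $\mathbb S^{n-1}$.

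\emph{Step 1: the equation for $\phi$.} With $q=\frac{n}{n-2}$ the exponent is $\frac{2}{q-1}=n-2$. The computation leading to \eqref{eq:w-ineq}, done without averaging, gives
\[
\phi_{tt}+a(t)\phi_t+\Delta_{\mathbb S^{n-1}}\phi+\phi^{\frac{n}{n-2}}=0,\qquad a(t)=(n-2)+\tfrac12e^{-2t}.
\]

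\emph{Step 2: the equation for $v$.} Substituting $\phi=t^{-\frac{n-2}{2}}v$ gives
\[
v_{tt}+\Bigl(a-\frac{n-2}{t}\Bigr)v_t+\Delta_{\mathbb S^{n-1}}v+\Bigl(\frac{n(n-2)}{4t^2}-\frac{(n-2)a}{2t}\Bigr)v+\frac{1}{t}v^{\frac{n}{n-2}}=0 .
\]
We use $\phi^{n/(n-2)}=t^{-n/2}v^{n/(n-2)}$, so after multiplying by $t^{(n-2)/2}$ the nonlinear term carries the factor $t^{-1}$.

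\emph{Step 3: the equation for $\psi$.} Subtract the spherical average. The linear terms pass through unchanged, so
\[
\psi_{tt}+\Bigl(a-\frac{n-2}{t}\Bigr)\psi_t+\Delta_{\mathbb S^{n-1}}\psi+c(t)\psi=-\frac1t\Bigl(v^{\frac{n}{n-2}}-\overline{v^{\frac{n}{n-2}}}\Bigr)=:f .
\]
Here $c(t)=O(t^{-1})$.

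\emph{Step 4: a priori bounds.} Lemma~\ref{lem:spherical-average} gives $0\le v\le C$. Rescaling on annuli (as in that lemma, with Harnack and interior estimates) gives bounds on $v_t$, $\nabla_\theta v$ and their derivatives, uniformly for $t\ge T_0$.

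\emph{Step 5: size of the forcing.} By the mean value theorem, $|v^{n/(n-2)}-\overline{v^{n/(n-2)}}|\le C|\psi|+C\|\psi\|_{L^1}$ pointwise. Hence $\|f(t)\|_{L^2}\le Ct^{-1}\|\psi(t)\|_{L^2}$, so $f$ is a small perturbation.

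\emph{Step 6: spectral gap.} Since $\psi$ has zero mean on the sphere, the first nonzero eigenvalue gives $\int|\nabla_\theta\psi|^2\ge(n-1)\int\psi^2$. Set $y(t):=\|\psi(t)\|_{L^2}^2$. Multiplying the $\psi$-equation by $\psi$ and integrating over $\mathbb S^{n-1}$ yields
\[
\tfrac12y''+\tfrac12\Bigl(a-\frac{n-2}{t}\Bigr)y'=\int\psi_t^2+\int|\nabla_\theta\psi|^2-c\,y-\int f\psi\ \ge\ \bigl(n-1-Ct^{-1}\bigr)y .
\]
For large $t$ this reads $y''+(n-2+o(1))y'-2(n-1-o(1))y\ge0$.

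\emph{Step 7: maximum principle.} The characteristic roots of $\mu^2+(n-2)\mu-2(n-1)=0$ are $\mu_\pm=\frac{-(n-2)\pm\sqrt{(n-2)^2+8(n-1)}}{2}$. They satisfy $\mu_+>0>\mu_-$, and $\mu_-<-1$. Since $y$ is bounded, compare on $[T,S]$ with the barrier $z=y(T)e^{\mu'(t-T)}+\varepsilon e^{\mu''(t-T)}$, choosing $\mu'$ slightly above $\mu_-$ and $\mu''$ slightly below $\mu_+$. Letting $S\to\infty$ and then $\varepsilon\to0$ gives exponential decay $y(t)\le Ce^{-\kappa t}$ with $\kappa>0$. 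This is much better than the required $t^{-2}$, so $\|\psi\|_{L^2}\le Ct^{-1}$ follows a fortiori.

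\emph{Main obstacle.} The delicate step is Step 5: controlling the nonlinear forcing $f$ so that it is genuinely lower order. The $t^{-1}$ factor and the uniform $L^\infty$ bound on $v$ from Lemma~\ref{lem:spherical-average} are exactly what make this work. If absorbing $f$ failed for some reason, one could instead treat $f$ crudely as $O(t^{-1})$ in $L^2$. The ODE comparison would then give $y\lesssim t^{-2}$ directly, which is precisely the stated rate; this fallback may be why the lemma asserts only $t^{-1}$.
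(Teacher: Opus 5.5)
Your proof is correct, but it takes a different route from the paper's.

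\textbf{The paper's route.} The paper works with $\phi$ and $X=\|\phi-\bar\phi\|_{L^2}^2$. It handles the nonlinearity by Young's inequality, $2|\int\phi^{\frac{n}{n-2}}\psi|\le (n-1)X+C\int\phi^{\frac{2n}{n-2}}\le (n-1)X+Ct^{-n}$. This spends half the spectral gap and leaves an inhomogeneous source term. The paper then uses $X\to0$ and compares with the polynomial supersolution $Kt^{-n}$ of $z''+az'-(n-1)z$. The outcome is $X\le Kt^{-n}$, which is exactly the rate $t^{-1}$ for $v$. This is essentially the ``fallback'' you describe at the end.

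\textbf{Your route.} You observe that the mean-zero part of the nonlinearity is Lipschitz in $\psi$ with an $O(t^{-1})$ constant. Your Step~5 is right: $|g(v)-\overline{g(v)}|\le L|\psi|+L\,\overline{|\psi|}$ with $g(s)=s^{\frac{n}{n-2}}$ and $L$ its Lipschitz constant on the bounded range of $v$. Hence it can be absorbed into the Poincar\'e gap, together with $c(t)\psi$. This gives a homogeneous inequality $y''+by'-k(t)y\ge0$ with $k(t)\to2(n-1)>0$, and there is no source term at all. Your two-exponential barrier argument is valid: any $\mu'\in(\mu_-,0)$ and $\mu''\in(0,\mu_+)$ work for large $T$. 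It needs only that $y$ is bounded, not that $y\to0$, and it yields $y\le Ce^{-\kappa t}$.

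\textbf{Comparison.} Your approach proves exponential angular decay, far stronger than stated, using the same input $0\le v\le C$. That would give ample slack in the later uses, such as $R\in L^1$ and the $h$-terms in Lemma~\ref{lem:energy-estimates}. The paper's argument is slightly cruder but delivers precisely the rate it needs afterwards.

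\textbf{Minor points.}
\begin{itemize}
\item In Step~6 the term should be $+\int f\psi$, not $-\int f\psi$. This is harmless, since you only use its absolute value.
\item The derivative bounds in Step~4 are not needed anywhere.
\item To cover the range $[T_0,T]$ you only need $\|\psi\|_{L^2}\le C$ there, which follows from $0\le v\le C$.
\end{itemize}
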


\begin{proof}
We first derive the equation satisfied by \(\phi\). A direct computation gives
\begin{equation}\label{eq:phi-equation}
    \phi_{tt}
    +
    \left((n-2)+\frac12e^{-2t}\right)\phi_t
    +
    \Delta_{\mathbb S^{n-1}}\phi
    +
    \phi^{\frac{n}{n-2}}
    =
    0.
\end{equation}
By Lemma~\ref{lem:spherical-average}, we have
\begin{equation}\label{eq:phi-pointwise}
    0\le \phi(t,\theta)\le C t^{-(n-2)/2},
    \quad t\ge T_0, ~~ \theta\in\mathbb{S}^{n-1}.
\end{equation}
Let
\[
    \psi(t,\theta):=\phi(t,\theta)-\bar\phi(t)  \quad \text{and} \quad  X(t):=\int_{\mathbb{S}^{n-1}}\psi(t,\theta)^2\,d\theta.
\]
Then $\int_{\mathbb{S}^{n-1}}\psi(t,\theta)\,d\theta=0$.
Taking the spherical average of \eqref{eq:phi-equation} and subtracting the
resulting equation from \eqref{eq:phi-equation}, we obtain
\begin{equation}\label{eq:psi-equation}
    \psi_{tt}
    +((n-2)+\frac12e^{-2t})\psi_t
    +\Delta_{\mathbb S^{n-1}}\psi
    +
    \phi^{\frac{n}{n-2}}-\overline{\phi^{\frac{n}{n-2}}}
    =
    0.
\end{equation}
Multiplying \eqref{eq:psi-equation} by \(2\psi\) and integrating over
\(\mathbb{S}^{n-1}\), we get
\begin{equation}\label{eq:X-identity}
    X''(t)+((n-2)+\frac12e^{-2t})X'(t)
    =
    2\int_{\mathbb{S}^{n-1}}\psi_t^2\,d\theta
    +
    2\int_{\mathbb{S}^{n-1}}|\nabla_\theta\psi|^2\,d\theta
    -
    2\int_{\mathbb{S}^{n-1}}\phi^{\frac{n}{n-2}}\psi\,d\theta.
\end{equation}
By the Poincar\'e inequality on the sphere,
\begin{equation}\label{eq:poincare-sphere}
    \int_{\mathbb{S}^{n-1}}|\nabla_\theta\psi|^2\,d\theta
    \ge
    (n-1)\int_{\mathbb{S}^{n-1}}\psi^2\,d\theta
    =
    (n-1)X(t).
\end{equation}
Moreover, by \eqref{eq:phi-pointwise}, together with Cauchy's inequality and Young's inequality, we have
\begin{equation}\label{eq:phi-2q}
    2\left|\int_{\mathbb{S}^{n-1}}\phi^{\frac{n}{n-2}}\psi\,d\theta\right|
    \le
    (n-1)X(t)
    +
    C\int_{\mathbb{S}^{n-1}}\phi^{\frac{2n}{n-2}}\,d\theta
    \le
    (n-1)X(t)
    +
    Ct^{-n}.
\end{equation}
Combining \eqref{eq:X-identity}, \eqref{eq:poincare-sphere} and
\eqref{eq:phi-2q}, we obtain
\begin{equation}\label{eq:X-diff-ineq}
    X''(t)
    +
    \Big((n-2)+\frac12e^{-2t}\Big)X'(t)
    -
    (n-1)X(t)
    \ge
    -C_1 t^{-n},
    \quad t\ge T_0.
\end{equation}
We also observe that \eqref{eq:phi-pointwise} implies
\begin{equation}\label{eq:X-to-zero}
    0\le X(t)\le C t^{-(n-2)}\to0
    \quad     \text{as }t\to\infty.
\end{equation}
We now compare \(X(t)\) with a multiple of \(t^{-n}\).  Define the linear operator $L$ by
\[
    Lz
    :=
    z''
    +
    \Big((n-2)+\frac12e^{-2t}\Big)z'
    -
    (n-1)z.
\]
After increasing \(T_0\) if necessary, we have
\begin{equation}\label{eq:Lt-minus-n}
    L(t^{-n})\le -c_0 t^{-n},
    \quad t\ge T_0
\end{equation}
for some \(c_0>0\).
Let \(C_1\) be the constant appearing in \eqref{eq:X-diff-ineq}.
Choose \(K>0\) sufficiently large so that $Kc_0\ge C_1$ and $X(T_0)\le K T_0^{-n}$.
Then, by \eqref{eq:X-diff-ineq} and \eqref{eq:Lt-minus-n}, we have $ X(T_0)-K T_0^{-n}\le0$ and $L\bigl(X-K t^{-n}\bigr)\ge0$ for $t\ge T_0$. Applying the maximum principle yields $X(t)\le K t^{-n}$ for $t\ge T_0$. This is equivalent to
\begin{equation*}
    \|\phi(t,\cdot)-\bar\phi(t)\|_{L^2(\mathbb{S}^{n-1})}
    \le C t^{-n/2}.
\end{equation*}
Lemma \ref{lem:spherical-oscillation} follows immediately from this estimate.
\end{proof}

\begin{lemma}\label{lem:uniform-bounds}
There exist constants \(C>0\) and \(T>0\) such that for all  $t\ge T$ and $\theta\in\mathbb{S}^{n-1}$,
\begin{equation}\label{eq:v-bounded}
    0\le v(t,\theta)\le C
\end{equation}
and
\begin{equation}\label{eq:v-derivatives-bounded}
    |v_t(t,\theta)|
    +
    |v_{tt}(t,\theta)|
    +
    |\nabla_\theta v(t,\theta)|
    \le C.
\end{equation}
\end{lemma}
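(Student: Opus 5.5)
The bound \eqref{eq:v-bounded} is immediate. By Lemma~\ref{lem:spherical-average},
\[
0\le \phi(t,\theta)\le C t^{-(n-2)/2} \qquad \text{for } t\ge T_0,
\]
and therefore $0\le v=t^{(n-2)/2}\phi\le C$.

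For the derivative bounds I will first control the derivatives of $\phi$ and then translate them to $v$. Since
\[
v_t=t^{(n-2)/2}\phi_t+\tfrac{n-2}{2}t^{(n-4)/2}\phi ,
\]
and since $\phi=O(t^{-(n-2)/2})$ makes the second term $O(t^{-1})$, it suffices to prove
\[
|\phi_t|+|\phi_{tt}|+|\nabla_\theta\phi|\le C t^{-(n-2)/2}\qquad\text{for } t\ge T .
\]
The term $v_{tt}$ is handled in the same way: besides $t^{(n-2)/2}\phi_{tt}$ it contains $t^{(n-4)/2}\phi_t$ and $t^{(n-6)/2}\phi$, and both are bounded once the $\phi$-estimates hold.

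The natural tool is a uniform interior estimate on unit cylinders. I fix $s\ge T_0+2$ and work on $Q_s:=(s-2,s+2)\times\mathbb S^{n-1}$. There $\phi$ solves \eqref{eq:phi-equation}, which I view as a \emph{linear} equation for $\phi$:
\[
\phi_{tt}+a(t)\phi_t+\Delta_{\mathbb S^{n-1}}\phi+c(t,\theta)\phi=0,\qquad c:=\phi^{\frac{2}{n-2}} .
\]
The coefficients are uniformly controlled: $a(t)=(n-2)+\frac12 e^{-2t}$ is smooth with all derivatives bounded uniformly in $s$, and $0\le c\le Ct^{-1}\le C$.

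Interior $W^{2,p}$ estimates on $Q_s$, with $p$ large, followed by Sobolev embedding give a $C^{1,\alpha}$ bound on $Q_s':=(s-1,s+1)\times\mathbb S^{n-1}$:
\[
\|\phi\|_{C^{1,\alpha}(Q_s')}\le C\|\phi\|_{L^\infty(Q_s)}\le C s^{-(n-2)/2}.
\]
Here the constant is independent of $s$, because $t\in(s-2,s+2)$ is comparable to $s$. Consequently $c=\phi^{2/(n-2)}$ has a uniform $C^{\alpha'}$ bound for some $\alpha'>0$; this holds even when $2/(n-2)<1$, since a Hölder power of a Hölder function is Hölder. Schauder estimates for the linear equation on a slightly smaller cylinder then give
\[
\|\phi\|_{C^{2,\alpha''}}\le C s^{-(n-2)/2}.
\]
This bounds $\phi_t$, $\phi_{tt}$ and $\nabla_\theta\phi$ by $C t^{-(n-2)/2}$, which is exactly what is needed.

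The main point requiring care is the scaling, not the regularity itself. The estimates must be applied to the linear equation, with the right-hand side proportional to $\phi$, so that the sup bound on $\phi$ propagates linearly to its derivatives. If the nonlinearity were instead treated as a source term $f=\phi^{n/(n-2)}$, the bound would still be fine, since $f=O(t^{-n/2})$ is smaller. However, the Hölder seminorm of $f$ must then also scale like $t^{-(n-2)/2}$ or better, and that is most cleanly ensured through the linear formulation. I will also check that the $C^{2,\alpha}$ Schauder constant does not depend on a lower bound for $\phi$. It does not, because only an upper bound on the Hölder norm of $c$ enters.
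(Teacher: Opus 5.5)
Your proof is correct and uses the same mechanism as the paper: uniform interior $W^{2,p}$ and Schauder estimates on translated unit cylinders $(s-1,s+1)\times\mathbb S^{n-1}$, with constants independent of $s$. The only difference is which function is estimated. The paper applies the estimates directly to \eqref{eq:v-equation}, whose coefficients and right-hand side $-\frac1t v^{\frac{n}{n-2}}$ are already uniformly bounded, so no decay bookkeeping is needed. You instead estimate $\phi$ through its linear form with $c=\phi^{2/(n-2)}$ and transfer the $t^{-(n-2)/2}$ rate to $v$ by the product rule, which is equally valid but slightly more roundabout.
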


\begin{proof}
By Lemma~\ref{lem:spherical-average}, we have \eqref{eq:v-bounded}. A direct calculation gives
\begin{equation}\label{eq:v-equation}
\begin{aligned}
    v_{tt}+\Delta_{\mathbb S^{n-1}} v
    &+
    \left((n-2)(1-\frac1t)+\frac12e^{-2t}\right)v_t        \\
    &-
    \left(
        \frac{(n-2)^2}{2t}
        -
        \frac{n(n-2)}{4t^2}
        +
        \frac{n-2}{4t}e^{-2t}
    \right)v
    =
    -\frac1t v^{\frac{n}{n-2}}.
\end{aligned}
\end{equation}
After the translation \(s:=t-\tau\), set
\[
    V_\tau(s,\theta):=v(\tau+s,\theta),
    \quad (s,\theta)\in (-1,1)\times \mathbb S^{n-1}.
\]
Then \(V_\tau\) satisfies, on the fixed cylinder $Q:=(-1,1)\times \mathbb S^{n-1}$,
a uniformly elliptic equation whose lower-order coefficients are bounded in
\(C^1(Q)\). Moreover, the right-hand side is uniformly bounded in \(L^\infty(Q)\). Applying the interior
\(W^{2,p}\)-estimates and the Schauder estimates on $Q_{1/2}:=(-\tfrac12,\tfrac12)\times\mathbb S^{n-1}$,
we obtain that $\|V_\tau\|_{C^{2,\alpha}(Q_{1/2})}\le C$ for some
\(\alpha\in(0,1)\). Returning to \(t=\tau+s\) yields
\[
    \|v\|_{C^{2,\alpha}((\tau-\frac12,\tau+\frac12)\times\mathbb S^{n-1})}
    \le C,
\]
where \(C\) is independent of \(\tau\).
Hence, $|v_t|+|v_{tt}|+|\nabla_\theta v|\le C$. This proves \eqref{eq:v-derivatives-bounded}.
\end{proof}

The following lemma establishes the square integrability of \(v_t\) and \(\nabla_\theta v\) on the tail of the cylinder, as well as the convergence \(v_t(t,\cdot)\to0\) in \(L^2(\mathbb{S}^{n-1})\).

\begin{lemma}\label{lem:energy-estimates}
There exists \(T>0\) such that
\begin{equation}\label{eq:vt-L2-finite}
    \int_T^\infty\int_{\mathbb{S}^{n-1}}v_t^2\,d\theta\,d t<\infty,
\end{equation}
\begin{equation}\label{eq:grad-L2-finite}
    \int_T^\infty\int_{\mathbb{S}^{n-1}}|\nabla_\theta v|^2\,d\theta\,d t<\infty
\end{equation}
and
\begin{equation}\label{eq:vt-L2-limit-zero}
    \lim_{t\to\infty}
    \int_{\mathbb{S}^{n-1}}v_t(t,\theta)^2\,d\theta=0.
\end{equation}
\end{lemma}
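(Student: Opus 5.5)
The plan is to derive a perturbed energy identity from the equation \eqref{eq:v-equation} for $v$, with the damping coefficient $(n-2)(1-\frac1t)+\frac12e^{-2t}$ playing the role of the monotonicity. The bounds of Lemma~\ref{lem:uniform-bounds} will control every boundary term. Write $q=\frac{n}{n-2}$ and set
\[
    \beta(t):=\frac{(n-2)^2}{2t}-\frac{n(n-2)}{4t^2}+\frac{n-2}{4t}e^{-2t},
    \qquad
    b(t):=(n-2)\Bigl(1-\frac1t\Bigr)+\frac12e^{-2t}.
\]
Then $\beta(t)=O(t^{-1})$, $\beta'(t)=O(t^{-2})$, and $b(t)\ge \frac{n-2}{2}$ for $t$ large.

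\textbf{Step 1: proof of \eqref{eq:vt-L2-finite}.} I will multiply \eqref{eq:v-equation} by $v_t$ and integrate over $\mathbb S^{n-1}$. This suggests the energy
\[
    G(t):=\int_{\mathbb S^{n-1}}\Bigl(\frac12v_t^2-\frac12|\nabla_\theta v|^2-\frac{\beta(t)}{2}v^2+\frac{1}{(q+1)t}v^{q+1}\Bigr)d\theta .
\]
Differentiating $G$ produces time derivatives of the coefficients $\beta(t)$ and $1/t$. Using Lemma~\ref{lem:uniform-bounds}, these contribute only
\[
    -\frac{\beta'(t)}{2}\int v^2-\frac{1}{(q+1)t^2}\int v^{q+1}=O(t^{-2}).
\]
Hence
\[
    G'(t)=-b(t)\int_{\mathbb S^{n-1}}v_t^2\,d\theta+O(t^{-2}).
\]
Since $G$ is bounded by Lemma~\ref{lem:uniform-bounds}, integrating from $T$ to $\infty$ gives
\[
    \frac{n-2}{2}\int_T^\infty\!\!\int v_t^2 \le \sup|G|+C\int_T^\infty t^{-2}\,dt<\infty .
\]

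\textbf{Step 2: proof of \eqref{eq:grad-L2-finite}.} I will test \eqref{eq:v-equation} against $\psi:=v-\bar v$. Because $\int\psi\,d\theta=0$, the term $\int\Delta_{\mathbb S^{n-1}}v\,\psi$ equals $-\int|\nabla_\theta v|^2$. The remaining terms are handled as follows:
\begin{itemize}
\item $\int v_{tt}\psi$: integrating by parts in $t$ gives the boundary term $\bigl[\int v_t\psi\bigr]_T^{t}$, which is bounded, plus $-\int\!\!\int v_t\psi_t$. Since $\psi_t=v_t-\bar v_t$ and $\|\bar v_t\|_{L^2}\le\|v_t\|_{L^2}$, this is controlled by Step 1.
\item $b\int v_t\psi$: this is at most $C\int v_t^2+C\int\psi^2$, and $\int\psi^2\le Ct^{-2}$ by Lemma~\ref{lem:spherical-oscillation}.
\item $\beta\int v\psi$ and $t^{-1}\int v^{q}\psi$: these are $O(t^{-1})\cdot\|\psi\|_{L^2}=O(t^{-2})$, again by Lemma~\ref{lem:spherical-oscillation}.
\end{itemize}
Integrating over $(T,t)$ and letting $t\to\infty$ then yields \eqref{eq:grad-L2-finite}.

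\textbf{Step 3: proof of \eqref{eq:vt-L2-limit-zero}.} Set $f(t):=\int v_t^2\,d\theta$. Then $|f'(t)|=2\bigl|\int v_tv_{tt}\bigr|\le C$ by \eqref{eq:v-derivatives-bounded}, so $f$ is Lipschitz. By Step 1, $f\in L^1(T,\infty)$, and a nonnegative Lipschitz integrable function must tend to $0$.

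The main obstacle is Step 1: I must pick exactly the right time-dependent correction terms in $G$ so that every non-sign-definite contribution, namely the $O(1/t)$ linear term and the $\frac1t v^{q}$ nonlinearity, becomes either an exact derivative or an $O(t^{-2})$ error. A naive estimate of the form $\frac1t\int|v\,v_t|$ would only be bounded by $\varepsilon\int v_t^2+Ct^{-2}$. That bound would still suffice, but only because the damping $b(t)$ stays bounded below, so I will keep that cruder absorption as a fallback.
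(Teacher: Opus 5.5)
Your proposal is correct and follows essentially the same route as the paper. It uses the same corrected energy, including the $-\tfrac12\beta(t)\int v^2$ and $\tfrac{1}{(q+1)t}\int v^{q+1}$ terms, so that its derivative equals $-b(t)\int v_t^2+O(t^{-2})$. It tests the equation against $v-\bar v$, integrating by parts in $t$ and invoking Lemma~\ref{lem:spherical-oscillation}. It concludes with the same Lipschitz-plus-integrable argument for $\int v_t^2\to0$; bounding $b\int v_t\psi$ by Young's inequality pointwise in $t$ rather than by Cauchy--Schwarz in time is an equally valid cosmetic variation.
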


\begin{proof}
Throughout the proof, \(T>0\) is chosen sufficiently large and may be
increased finitely many times.  We write
\[
    I(t):=\int_{\mathbb{S}^{n-1}}v_t^2\,d\theta,
    \qquad
    G(t):=\int_{\mathbb{S}^{n-1}}|\nabla_\theta v|^2\,d\theta,
\]
\[
    V(t):=\int_{\mathbb{S}^{n-1}}v^2\,d\theta,
    \qquad
    F(t):=\int_{\mathbb{S}^{n-1}}v^{\frac{n}{n-2}+1}\,d\theta.
\]
We first verify \eqref{eq:vt-L2-finite}. Let
\begin{equation*}
\begin{aligned}
    \mathcal E(t)
    :=
    &\frac12 I(t)
    -
    \frac12 G(t)        \\
    &-
    \frac12
    \left(
        \frac{(n-2)^2}{2t}
        -
        \frac{n(n-2)}{4t^2}
        +
        \frac{n-2}{4t}e^{-2t}
    \right)V(t)
    +
    \frac1{\left(\frac{n}{n-2}+1\right)t}F(t).
\end{aligned}
\end{equation*}
Multiplying \eqref{eq:v-equation} by \(v_t\) and integrating over \(\mathbb{S}^{n-1}\), we get
\begin{equation}\label{eq:energy-identity}
\begin{aligned}
    &\frac{d}{d t}\mathcal E(t)
    +
    \left((n-2)\left(1-\frac1t\right)+\frac12e^{-2t}\right)I(t)        \\
    & \quad =
    -
    \frac12
    \frac{d}{d t}
    \left(
        \frac{(n-2)^2}{2t}
        -
        \frac{n(n-2)}{4t^2}
        +
        \frac{n-2}{4t}e^{-2t}
    \right)V(t)       -
    \frac1{\left(\frac{n}{n-2}+1\right)t^2}F(t).
\end{aligned}
\end{equation}
By Lemma~\ref{lem:uniform-bounds}, the quantities \(I(t)\), \(G(t)\),
\(V(t)\), and \(F(t)\) are uniformly bounded.  Hence
\begin{equation}\label{eq:energy-bounded}
    |\mathcal E(t)|\le C.
\end{equation}
Moreover, the right-hand side of \eqref{eq:energy-identity} is bounded in
absolute value by \(Ct^{-2}\), and hence is integrable on \([T,\infty)\).
After increasing \(T\) if necessary,
\[
    (n-2)\left(1-\frac1t\right)+\frac12e^{-2t}
    \ge \frac{n-2}{2}>0,
    \quad t\ge T.
\]
Integrating \eqref{eq:energy-identity} from \(T\) to \(S\), using
\eqref{eq:energy-bounded} and letting \(S\to\infty\), we get $\int_T^\infty I(t)\,d t<\infty$.
This proves \eqref{eq:vt-L2-finite}.

Now we prove \eqref{eq:grad-L2-finite}.
Let
$
    h(t,\theta):=v(t,\theta)-\bar v(t).
$
By Lemma~\ref{lem:spherical-oscillation},
\begin{equation}\label{eq:h-decay}
    \|h(t,\cdot)\|^2_{L^2(\mathbb{S}^{n-1})}\le Ct^{-2}.
\end{equation}
Multiplying \eqref{eq:v-equation} by \(h\) and integrating over \(\mathbb{S}^{n-1}\) yields
\begin{equation}\label{eq:gradient-identity}
\begin{aligned}
    G(t)
    =
    &\int_{\mathbb{S}^{n-1}}v_{tt}h\,d\theta    +
    \left((n-2)(1-\frac1t)+\frac12e^{-2t}\right)
    \int_{\mathbb{S}^{n-1}}v_th\,d\theta        \\
    &-
    \left(
        \frac{(n-2)^2}{2t}
        -
        \frac{n(n-2)}{4t^2}
        +
        \frac{n-2}{4t}e^{-2t}
    \right)
    \int_{\mathbb{S}^{n-1}}h^2\,d\theta +\frac1t\int_{\mathbb{S}^{n-1}}v^{\frac{n}{n-2}} h\,d\theta.
\end{aligned}
\end{equation}
We estimate the integral in time of each term on the right-hand side.
First,
\[
    \int_T^S\int_{\mathbb{S}^{n-1}}v_{tt}h\,d\theta\,d t
    =
    \left[\int_{\mathbb{S}^{n-1}}v_th\,d\theta\right]_{T}^{S}
    -
    \int_T^S\int_{\mathbb{S}^{n-1}}v_t h_t\,d\theta\,d t.
\]
By Lemmas \ref{lem:uniform-bounds} and \ref{lem:spherical-oscillation}, the boundary term is uniformly bounded. Moreover,
\[
0\le \int_{\mathbb{S}^{n-1}}v_th_t \,d\theta
=
\int_{\mathbb{S}^{n-1}}v_t^2-|\mathbb{S}^{n-1}|\bar v_t^2
\le
\int_{\mathbb{S}^{n-1}}v_t^2 \,d\theta.
\]
By \eqref{eq:vt-L2-finite}, it follows that
\begin{equation*}
    \left|
    \int_T^S\int_{\mathbb{S}^{n-1}}v_{tt}h\,d\theta\,d t
    \right|
    \le C,
\end{equation*}
where \(C\) is independent of \(S\). Applying Cauchy's inequality yields
\[
\begin{aligned}
    &\int_T^\infty
    \left|
        (n-2)\left(1-\frac1t\right)+\frac12e^{-2t}
    \right|
   \times
    \left|
        \int_{\mathbb{S}^{n-1}}v_th\,d\theta
    \right|\,d t        \\
    &\le
    C(n,T)
    \left(
        \int_T^\infty I(t)\,d t
    \right)^{1/2}
    \left(
        \int_T^\infty\|h(t,\cdot)\|_{L^2(\mathbb{S}^{n-1})}^2\,d t
    \right)^{1/2}
    <\infty.
\end{aligned}
\]
Moreover, \eqref{eq:h-decay} gives
\[
    \int_T^\infty
    \left|
        \frac{(n-2)^2}{2t}
        -
        \frac{n(n-2)}{4t^2}
        +
        \frac{n-2}{4t}e^{-2t}
    \right|
    \int_{\mathbb{S}^{n-1}}h^2\,d\theta\,d t
    \le
    C\int_T^\infty t^{-1}t^{-2}\,d t
    <\infty.
\]
By Lemma \ref{lem:uniform-bounds}, \(v\) is uniformly bounded, which implies
\[
     \int_T^\infty \left|
        \frac1t\int_{\mathbb{S}^{n-1}}v^{\frac{n}{n-2}}h\,d\theta
    \right|
    \le
     \int_T^\infty \frac C t\|h(t,\cdot)\|_{L^2(\mathbb{S}^{n-1})}
    \le
    C  \int_T^\infty t^{-2}  <\infty.
\]
Integrating \eqref{eq:gradient-identity} from \(T\) to \(S\), using the
bounds above and letting \(S\to\infty\), we obtain \eqref{eq:grad-L2-finite}.

By Lemma \ref{lem:uniform-bounds}, we have
\[
    |I'(t)|
    =
    2\left|
        \int_{\mathbb{S}^{n-1}}v_t v_{tt}\,d\theta
    \right|
    \le C.
\]
On the other hand, \eqref{eq:vt-L2-finite} shows that $\int_T^\infty I(t)\,d t<\infty$. Thus \(I(t)\to0\), which proves \eqref{eq:vt-L2-limit-zero}. The proof of Lemma \ref{lem:energy-estimates} is now complete.
\end{proof}

The following lemma establishes a dichotomy for the asymptotic behavior of solutions.

\begin{lemma}\label{lem:cylindrical-limit}
Let $q=\frac{n}{n-2}$ and $u\in C^2(B_1\setminus\{0\})$ be a nonnegative solution of
\eqref{eq:main}. Then
\begin{equation*}
\lim_{x\to0}
    |x|^{n-2}(-\ln |x|)^{\frac{n-2}{2}}u(x)
    =
    0
    ~~\text{or}~~
    \left(\frac{n-2}{\sqrt2}\right)^{n-2}.
\end{equation*}
\end{lemma}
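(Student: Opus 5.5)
The plan is to show that $v(t,\theta)=t^{(n-2)/2}r^{n-2}u(r\theta)$ converges uniformly on $\mathbb S^{n-1}$, as $t\to\infty$, to a constant $\ell\in\{0,\ell_*\}$ with $\ell_*:=\left(\frac{n-2}{\sqrt2}\right)^{n-2}$. The argument has three steps: a subsequential limit analysis, identification of the possible constants through the averaged ODE, and uniqueness of the limit.

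First I take an arbitrary sequence $\tau_j\to\infty$ and set $V_j(s,\theta):=v(\tau_j+s,\theta)$. Lemma~\ref{lem:uniform-bounds} gives uniform $C^{2,\alpha}$ bounds, so after passing to a subsequence $V_j\to V$ in $C^2_{\mathrm{loc}}(\mathbb R\times\mathbb S^{n-1})$. Lemma~\ref{lem:energy-estimates} gives $\int_{\tau_j-1}^{\tau_j+1}\!\int (v_t^2+|\nabla_\theta v|^2)\to0$, since these are tails of convergent integrals. Hence $V_t\equiv0$ and $\nabla_\theta V\equiv0$, so $V\equiv\ell$ is a constant. Equivalently, $\sup_\theta|v(t,\theta)-\bar v(t)|\to0$ by the $L^2$ oscillation bound of Lemma~\ref{lem:spherical-oscillation}, upgraded to uniform convergence by the $C^1$ bounds and interpolation. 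Also $\bar v_t(t)\to0$, since $|\bar v_t|^2\le C I(t)\to0$ by \eqref{eq:vt-L2-limit-zero}. So it suffices to show that $\bar v(t)$ has a limit and that this limit lies in $\{0,\ell_*\}$.

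Second, I identify the possible limits through the averaged equation. Averaging \eqref{eq:v-equation} over the sphere gives
\[
\bar v_{tt}+\Big((n-2)(1-\tfrac1t)+\tfrac12e^{-2t}\Big)\bar v_t=\frac{1}{t}\Big(\tfrac{(n-2)^2}{2}\bar v-\overline{v^{n/(n-2)}}\Big)+O(t^{-2}).
\]
Since $v-\bar v\to0$ uniformly, $\overline{v^{n/(n-2)}}=\bar v^{\,n/(n-2)}+o(1)$. I write $g(\bar v):=\frac{(n-2)^2}{2}\bar v-\bar v^{\,n/(n-2)}$, whose nonnegative zeros are exactly $0$ and $\ell_*$, because $\ell_*^{2/(n-2)}=\frac{(n-2)^2}{2}$. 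Integrating the averaged equation against the integrating factor $e^{A(t)}$, with $A'(t)$ equal to the coefficient of $\bar v_t$, yields $\bar v_t(t)=\frac{1}{(n-2)t}\,(g(\bar v(t))+o(1))$, using that $\bar v$ varies slowly because $\bar v_t\to0$. So if $\bar v$ stays at distance $\ge\delta$ from $\{0,\ell_*\}$ on a long interval, then $|\bar v_t|\ge c/t$ with a fixed sign there. Since $\int dt/t=\infty$, such an interval has bounded "log-length", so $\bar v$ must cross the region monotonically. I expect this asymptotic ODE step to be the main obstacle.

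Third, I rule out oscillation. The limit set of $\bar v(t)$ is a connected interval because $\bar v$ is continuous. It is contained in $\{0,\ell_*\}$, since any other value would give a $C^2_{\mathrm{loc}}$ constant limit $V\equiv\ell$ solving the limit equation. To see this, multiply \eqref{eq:v-equation} by $t$ and pass to the limit after subtracting the $\bar v_{tt}$ and $\bar v_t$ parts. The term $t\bar v_{tt}$ need not vanish, so I would instead use the integrated form from the second step: if $\bar v(\tau_j)\to\ell\notin\{0,\ell_*\}$, then $g$ keeps a strict sign near $\ell$, so $\bar v$ is monotone there and the limit points cannot accumulate on both sides. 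A connected subset of $\{0,\ell_*\}$ is a single point, so $\bar v(t)\to\ell\in\{0,\ell_*\}$. Together with the uniform oscillation decay, this gives the claim of Lemma~\ref{lem:cylindrical-limit}.

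The delicate point, which I would check first, is making the asymptotic formula $\bar v_t\approx g(\bar v)/((n-2)t)$ rigorous. Concretely, I must verify that $e^{-A(t)}\int_T^t e^{A(s)}s^{-1}h(s)\,ds=\frac{h(t)}{(n-2)t}+o(1/t)$ for a slowly varying $h$. That slow variation follows from $\bar v_t\to0$, which gives $|h(s)-h(t)|\le C|s-t|\sup|\bar v_t|$ on windows of size $O(1)$, where the exponential kernel concentrates its mass.
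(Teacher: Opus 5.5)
Your proposal is correct, but it reaches the convergence of $\bar v$ by a different route from the paper.

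\textbf{The paper's route.} The paper never derives a first-order asymptotic equation. It integrates the averaged second-order equation directly over a down-crossing interval $[s_k,t_k]$, on which $\bar v$ falls from $\mu$ to $\lambda$. All error terms vanish in the limit for two reasons:
\begin{itemize}
\item $R\in L^1$, which uses the $t^{-1}$ rate of Lemma~\ref{lem:spherical-oscillation};
\item $\frac{n-2}{t}(\bar v)'\in L^1$, which uses $(\bar v)'\in L^2$ from Lemma~\ref{lem:energy-estimates}.
\end{itemize}
This gives $(n-2)(\lambda-\mu)=\lim_k\int_{s_k}^{t_k}f(\bar v)\,t^{-1}\,dt\ge0$, a contradiction. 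The contradiction relies on $f>0$ on $(0,\ell_*)$, and hence on the sharp a priori bound \eqref{eq:lemma1-limsup}. The value of the limit is then fixed because $\int^\infty f(\bar v(t))\,t^{-1}\,dt$ converges. This is the same $\int dt/t=\infty$ mechanism as your log-length remark.

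\textbf{Your route.} You absorb the $-\frac{n-2}{t}$ drift correction into the integrating factor. You then use the concentration of the kernel $e^{-(A(t)-A(s))}$ on windows $[t-L,t]$, together with $\bar v'\to0$, to obtain
$t\bar v'(t)=g(\bar v(t))/(n-2)+o(1)$. This is an asymptotically autonomous scalar ODE in $\ln t$, which you finish by a phase-line argument.

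This is valid. It becomes rigorous by splitting the kernel integral at $t-L$ and $t/2$ and using $A(t)-A(s)=(n-2)(t-s)-(n-2)\ln(t/s)+O(e^{-2s})$.

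\textbf{What each approach buys.}
\begin{itemize}
\item Your approach tolerates $o(1/t)$ errors within this lemma, so the qualitative uniform decay of $v-\bar v$ suffices.
\item It needs neither $(\bar v)'\in L^2$ nor the sharp constant in \eqref{eq:lemma1-limsup}; boundedness of $v$ is enough, since you handle both signs of $g$.
\item The price is the kernel estimate, which is longer than the paper's few-line integration.
\end{itemize}

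\textbf{Points to fix in the write-up.}
\begin{itemize}
\item Drop the aborted ``limit equation'' attempt in your third step.
\item State the phase-line step explicitly. If $g(\ell)\neq0$, then for large $t$ the level $\ell$ can be crossed in only one direction, so $\ell$ cannot lie in the interior of $[\liminf\bar v,\limsup\bar v]$. Hence $\liminf\bar v=\limsup\bar v$. If this common value $\ell$ had $g(\ell)\neq0$, then $t\bar v'\to g(\ell)/(n-2)\neq0$ would force logarithmic divergence of $\bar v$, contradicting boundedness.
\item Your first-step subsequential analysis is superfluous. Only $\sup_\theta|v-\bar v|\to0$ and $\bar v'\to0$ are used, exactly as in the paper.
\end{itemize}
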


\begin{proof}

Averaging \eqref{eq:v-equation} over \(\mathbb{S}^{n-1}\), we obtain
\begin{equation}\label{eq:vbar-equation}
    (\bar v)''
    +
    \left((n-2)(1-\frac1t)+\frac12e^{-2t}\right)(\bar v)'
    =
    \frac1t f(\bar v)+R(t),
\end{equation}
where
\begin{equation}\label{eq:R-def}
    f(s):=\frac{(n-2)^2}{2}s-s^{\frac{n}{n-2}} ~~ \text{and} ~~  R(t)
    :=
    \frac1t\bigl((\bar v)^{\frac{n}{n-2}}-\overline{v^{\frac{n}{n-2}}}\bigr)
    -
    \frac{n(n-2)}{4t^2}\bar v
    +
    \frac{n-2}{4t}e^{-2t}\bar v.
\end{equation}
We claim that
\begin{equation}\label{eq:R-L1}
    R\in L^1([T,\infty))\qquad \text{for \(T\) large enough.}
\end{equation}
By Lemma \ref{lem:uniform-bounds}, \(v\) is
uniformly bounded.  Hence the map \(s\mapsto s^{\frac{n}{n-2}}\) is Lipschitz on the range of
\(v\), and so
\[
    \left|
        \overline{v^{\frac{n}{n-2}}}-(\bar v)^{\frac{n}{n-2}}
    \right|
    \le
    C(n)
    \frac1{|\mathbb{S}^{n-1}|}
    \int_{\mathbb{S}^{n-1}}|v-\bar v|\,d\theta
    \le
    C(n)\|v(t,\cdot)-\bar v(t)\|_{L^2(\mathbb{S}^{n-1})}.
\]
Combining this with Lemma \ref{lem:spherical-oscillation} gives
\[
    \left|
        \frac1t\bigl((\bar v)^{\frac{n}{n-2}}-\overline{v^{\frac{n}{n-2}}}\bigr)
    \right|
    \le Ct^{-2}.
\]
The remaining two terms in \eqref{eq:R-def} are also integrable.
This proves \eqref{eq:R-L1}.

It follows from Lemma~\ref{lem:energy-estimates} that
\begin{equation}\label{eq:vbarprime-zero}
    |(\bar v)'(t)|^2
    =
    \left|
        \frac1{|\mathbb{S}^{n-1}|}\int_{\mathbb{S}^{n-1}}v_t(t,\theta)\,d\theta
    \right|^2
    \le
    \frac1{|\mathbb{S}^{n-1}|}
    \int_{\mathbb{S}^{n-1}}v_t(t,\theta)^2\,d\theta
    \to0\quad \text{as }t\to +\infty.
\end{equation}
Moreover, we have $(\bar v)'\in L^2([T,\infty))$. This implies that
\begin{equation}\label{eq:drift-error-yprime-L1}
    \left(-\frac{n-2}{t}+\frac12e^{-2t}\right)(\bar v)'
    \in L^1([T,\infty)).
\end{equation}
By \eqref{eq:lemma1-limsup}, we have
\begin{equation*}
    \limsup_{t\to\infty}\bar v(t)
    \le
    \left(\frac{n-2}{\sqrt2}\right)^{n-2}.
\end{equation*}
We now prove that \(\bar v(t)\) has a limit.  Suppose, by contradiction, that
\(\bar v\) does not converge. Choose $\lambda,\,\mu \geq 0$ such that
\[
    0 \le \liminf_{t\to\infty}\bar v(t)<\lambda<\mu<\limsup_{t\to\infty}\bar v(t) \le \left(\frac{n-2}{\sqrt2}\right)^{n-2}.
\]
We may choose intervals
$
    [s_k,t_k]\subset [T,\infty)  ~ \text{with} ~ s_k, t_k\to\infty,
$
such that
$
    \bar v(s_k)=\mu, \bar v(t_k)=\lambda
$
and
$
    \lambda\le \bar v(t)\le\mu
    \text{ for }s_k\le t\le t_k.
$
Integrating \eqref{eq:vbar-equation} from \(s_k\) to \(t_k\) gives
\begin{align}
    &(\bar v)'(t_k)-(\bar v)'(s_k)
    +(n-2)\bigl(\bar v(t_k)-\bar v(s_k)\bigr)
    +\int_{s_k}^{t_k}
    \left(-\frac{n-2}{t}+\frac12e^{-2t}\right)(\bar v)'(t)\,d t        \notag\\
    &
    =
    \int_{s_k}^{t_k}\frac{f(\bar v(t))}{t}\,d t
    +
    \int_{s_k}^{t_k}R(t)\,d t.
    \label{eq:crossing-integral}
\end{align}
Using \eqref{eq:vbarprime-zero}, \eqref{eq:drift-error-yprime-L1} and \eqref{eq:R-L1}, we obtain that as \(k\to\infty\),
\[
    (\bar v)'(t_k)-(\bar v)'(s_k)\to0,\quad
    \int_{s_k}^{t_k}
    \left(-\frac{n-2}{t}+\frac12e^{-2t}\right)(\bar v)'(t)\,d t
    \to0,\quad
    \int_{s_k}^{t_k}R(t)\,d t\to0.
\]
Passing to the limit in \eqref{eq:crossing-integral}, we have
\begin{equation}\label{eq:crossing-limit}
    0>(n-2)(\lambda-\mu)
    =
    \lim_{k\to\infty}
    \int_{s_k}^{t_k}\frac{f(\bar v(t))}{t}\,d t.
\end{equation}
However, $f(s)>0$ for \(0<s<\bigl((n-2)/\sqrt2\bigr)^{n-2}\).
Hence the right-hand side of
\eqref{eq:crossing-limit} is nonnegative.
This contradiction shows that \(\bar v(t)\) must have a limit.  We denote it by
\begin{equation}\label{eq:v-uniform-limit}
     \ell:=\lim\limits_{t\to\infty}\bar v(t).
\end{equation}
Now we show that $v(t,\theta)$ converges uniformly to $\ell$ as $t \to \infty$. We claim that
\begin{equation}\label{eq:L2-to-Linfty}
    \|v(t,\cdot)-\bar v(t)\|_{L^\infty(\mathbb{S}^{n-1})}\to0.
\end{equation}
If not, there exist \(\varepsilon_0>0\), a sequence \(t_j\to\infty\), and points \(\theta_j\in\mathbb S^{n-1}\) such that $|v(t_j,\theta_j)-\bar v(t_j)|\ge \varepsilon_0$.
Set $f_j(\theta):=v(t_j,\theta)-\bar v(t_j)$. By Lemma~\ref{lem:uniform-bounds}, we know $\|\nabla_\theta(v(t,\cdot)-\bar v(t))\|_{L^\infty(\mathbb{S}^{n-1})}\le C$. Hence, the inequality
\[
    |f_j(\theta)-f_j(\theta_j)|
    \le C\,d_{\mathbb S^{n-1}}(\theta,\theta_j)
\]
holds for  \(\theta\in\mathbb S^{n-1}\). Here \(d_{\mathbb S^{n-1}}\) denotes the geodesic distance on the sphere.
Let
\[
\rho:=\min\left\{1,\frac{\varepsilon_0}{2(C+1)}\right\}.
\]
Then, whenever \(d_{\mathbb S^{n-1}}(\theta,\theta_j)<\rho\), we have the lower bound
\[
    |f_j(\theta)|
    \ge |f_j(\theta_j)|-|f_j(\theta)-f_j(\theta_j)|
    \ge \varepsilon_0-C\rho
    \ge \frac{\varepsilon_0}{2}.
\]
Since each geodesic ball $B_{\mathbb S^{n-1}}(\theta_j,\rho)$ has measure bounded below by a positive constant \(m_\rho>0\),
we obtain
\[
\begin{aligned}
\|v(t_j,\cdot)-\bar v(t_j)\|_{L^2(\mathbb S^{n-1})}^2
&\ge
\int_{B_{\mathbb S^{n-1}}(\theta_j,\rho)}
|f_j(\theta)|^2\,d\theta        \ge
\frac{\varepsilon_0^2}{4}m_\rho>0.
\end{aligned}
\]
This contradicts Lemma~\ref{lem:spherical-oscillation}.  Hence \eqref{eq:L2-to-Linfty}
holds. Consequently,
\[
    \sup_{\theta\in\mathbb{S}^{n-1}}|v(t,\theta)-\ell|
    \le
    \|v(t,\cdot)-\bar v(t)\|_{L^\infty(\mathbb{S}^{n-1})}
    +
    |\bar v(t)-\ell|
    \to 0.
\]
Thus, $v(t,\theta)$ converges uniformly to $\ell$ as $t \to \infty$.

Finally, we identify the possible values of \(\ell\).  Integrating
\eqref{eq:vbar-equation} from \(T\) to \(S\) gives
\begin{align*}
    \int_T^S\frac{f(\bar v(t))}{t}\,d t
    &=
    (\bar v)'(S)-(\bar v)'(T)
    +(n-2)\bigl(\bar v(S)-\bar v(T)\bigr)        \notag\\
    &\quad
    +\int_T^S
    \left(-\frac{n-2}{t}+\frac12e^{-2t}\right)(\bar v)'(t)\,d t
    -
    \int_T^S R(t)\,d t.
\end{align*}
By \eqref{eq:v-uniform-limit},
\eqref{eq:R-L1}, \eqref{eq:vbarprime-zero} and \eqref{eq:drift-error-yprime-L1}, the right-hand side has a finite limit as \(S\to\infty\).
Therefore, the improper integral
\[
    \int_T^\infty\frac{f(\bar v(t))}{t}\,d t
\]
is finite. This leads to $ \ell=0 $ or $\ell=\left(\frac{n-2}{\sqrt2}\right)^{n-2}$. This completes the proof of Lemma \ref{lem:cylindrical-limit}.
\end{proof}

Finally, we show that the singularity is removable when $\lim_{x\to0} |x|^{n-2}(-\ln |x|)^{\frac{n-2}{2}}u(x)=0$.

\begin{lemma}\label{lem:zero-limit-removable}
Let $q=\frac{n}{n-2}$ and $u\in C^2(B_1\setminus\{0\})$ be a nonnegative solution of
\eqref{eq:main}. If
\begin{equation}\label{eq:lzero-asymptotic}
    \lim_{x\to0}
    |x|^{n-2}(-\ln |x|)^{\frac{n-2}{2}}u(x)=0,
\end{equation}
then $u$ can be extended as a $C^2(B_1)$ solution of \eqref{eq:main} in $B_1$.
\end{lemma}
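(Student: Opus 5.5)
The plan mimics Lemma~\ref{lem:zero-removable}: first bootstrap the decay from the hypothesis \eqref{eq:lzero-asymptotic} to a bound $u(x)\le A|x|^{-s}$ with some $s<n-2$, then to boundedness, and finally apply elliptic regularity. Write $\mathcal L:=-\Delta+\frac12x\cdot\nabla+\frac{n-2}{2}-u^{\frac{2}{n-2}}$, so that $\mathcal L u=0$. By \eqref{eq:lzero-asymptotic},
\[
u^{\frac{2}{n-2}}(x)=o\bigl(|x|^{-2}(-\ln|x|)^{-1}\bigr),
\]
so the potential is only barely subcritical. Hence pure power barriers $|x|^{-(n-2)}$ no longer give $\mathcal L>0$: indeed $s(n-2-s)=0$ at $s=n-2$. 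This is why a logarithmic correction is needed.

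\textbf{Step 1: logarithmic barrier.} Consider
\[
\Phi_1(x):=|x|^{2-n}(-\ln|x|)^{-\beta},\qquad \beta>0.
\]
A direct computation in $t=-\ln r$ (using $\Delta(r^{2-n}g(t))=r^{-n}(g''+(n-2)g')$ in the radial part) gives
\[
-\Delta\Phi_1=r^{-n}\Bigl((n-2)\beta t^{-\beta-1}-\beta(\beta+1)t^{-\beta-2}\Bigr).
\]
The potential contributes $-o(t^{-1})r^{-2}\Phi_1$, and the drift and zero-order terms are $O(1)\Phi_1$. Thus $\mathcal L\Phi_1\ge c\,r^{-n}t^{-\beta-1}>0$ near $0$ for any fixed $\beta\in(0,\frac{n-2}{2})$; any $\beta>0$ works, since the $o(1)$ eventually beats $(n-2)\beta$. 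I will take $\beta$ small. In addition, for fixed $0<s<n-2$, the power $\Phi_2:=|x|^{-s}$ satisfies $\mathcal L\Phi_2\ge(s(n-2-s)-o(1))r^{-2}\Phi_2>0$ near $0$.

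\textbf{Step 2: comparison on annuli.} On $A_{\varepsilon,r}$ compare $u$ with
\[
A|x|^{-s}+m_\varepsilon\Phi_1,\qquad m_\varepsilon:=\max_{\partial B_\varepsilon}u/\Phi_1 .
\]
By \eqref{eq:lzero-asymptotic} we have $m_\varepsilon=o(\varepsilon^{0})\cdot(\ln(1/\varepsilon))^{\beta-\frac{n-2}{2}}\to0$, since $\beta<\frac{n-2}{2}$. Using the quotient maximum principle argument exactly as in Lemma~\ref{lem:zero-removable} and letting $\varepsilon\to0$ yields $u\le A|x|^{-s}$ on $B_r\setminus\{0\}$. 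Choose $s<n-2$ with moreover $\frac{2s}{n-2}<2$; then $u^{\frac2{n-2}}\le A'|x|^{-\frac{2s}{n-2}}$. Picking $s$ close to $0$ in a second round (or directly choosing $s$ such that $\frac{2s}{n-2}<1$) gives $u^{\frac2{n-2}}\le C|x|^{-\gamma}$ with $\gamma<1$. The second half of Lemma~\ref{lem:zero-removable} then applies verbatim: use the supersolution $M+K(r-|x|)$ plus $A\varepsilon^{\sigma}|x|^{-s}$ to absorb the inner boundary, noting $\mathcal L(r-|x|)\ge c_0/|x|$ because $u^{\frac2{n-2}}(r-|x|)=O(|x|^{1-\gamma})$. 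This gives that $u$ is bounded near $0$.

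\textbf{Step 3: regularity.} Once $u$ is bounded, the gradient bound $|x||\nabla u|\le C$ follows by rescaling, exactly as in Lemma~\ref{lem:zero-removable}. Lemma~\ref{lem:local-integrability-distribution} (valid for $q=\frac n{n-2}$) shows that $u$ is a distributional solution in $B_1$, and $W^{2,p}$ and Schauder estimates then give $u\in C^2(B_1)$.

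The main obstacle is Step~1: verifying that the $o(t^{-1})$ potential is dominated by the first-order logarithmic gain $(n-2)\beta t^{-1}$. This is where the precise rate in \eqref{eq:lzero-asymptotic} is used. One must also check the inner-boundary requirement $m_\varepsilon\to0$, which forces $\beta<\frac{n-2}{2}$; these two constraints are compatible.
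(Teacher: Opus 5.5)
Your proposal is correct, but it is organized differently from the paper's proof.

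\textbf{The paper's route.} The paper conjugates by a logarithmic weight, $\widetilde u=\tau^{-s}u$ with $\tau=-\ln|x|^2$. This makes $\widetilde u$ a subsolution of the drift Laplacian $L_s$; the zero-order coefficient $Q_s$ is positive because $u^{2/(n-2)}=o(|x|^{-2}\tau^{-1})$. The paper then compares $\widetilde u$ once with $M+\varepsilon_j\Psi$, where $\Psi$ is the explicit radial solution of $L_s\Psi=0$. This yields only $u\le C(-\ln|x|)^s$, so the paper finishes with $W^{2,p}$ estimates for a right-hand side in $L^p$, $p<\infty$.

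\textbf{Your route.} You stay with $\mathcal L$ and rerun the two-round scheme of Lemma~\ref{lem:zero-removable}. Your inner absorber $|x|^{2-n}(-\ln|x|)^{-\beta}$ is, to leading order as $x\to0$, a constant multiple of the paper's $\tau^s\Psi$ with $\beta=s$. So the essential logarithmic barrier is the same; the difference lies in the outer supersolution. In $u$-variables the paper's outer barrier is $M\tau^s$, a supersolution only because $Q_s>0$, i.e.\ it uses the logarithmic rate in \eqref{eq:lzero-asymptotic}. Your outer barriers $|x|^{-s}$, $0<s<n-2$, need only $u^{2/(n-2)}=o(|x|^{-2})$, so the sharp rate enters your argument solely through the inner absorber. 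Your route costs one extra comparison but gives an honest $L^\infty$ bound and reuses Lemma~\ref{lem:zero-removable} almost verbatim. The paper's route is a one-shot comparison against an exact solution.

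\textbf{Two small slips in your second round.}
\begin{enumerate}
\item The absorber $A\varepsilon^{\sigma}|x|^{-s}$ cannot carry the same exponent $s$. On $\partial B_\varepsilon$ it equals $A\varepsilon^{\sigma-s}$, and the available bound $u\le A\varepsilon^{-s}$ lets it dominate $u$ only for $\sigma\le0$, in which case it does not vanish as $\varepsilon\to0$. Use instead $A\varepsilon^{s'-s}|x|^{-s'}$ with $s<s'<n-2$, as in Lemma~\ref{lem:zero-removable}, where the exponents are $\frac1{q-1}<\frac2{q-1}$. Alternatively, reuse $\Phi_1$ with coefficient $\max_{\partial B_\varepsilon}u/\Phi_1\le A\varepsilon^{n-2-s}(\ln\frac1\varepsilon)^{\beta}\to0$.
\item The estimate needed for $\mathcal L(r-|x|)\ge c_0/|x|$ is $|x|\,u^{2/(n-2)}(r-|x|)\le A^{2/(n-2)}r\,|x|^{1-\gamma}\le A^{2/(n-2)}r^{2-\gamma}$. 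This is small once $r$ is small, with $A$ frozen from the first round. The quantity $u^{2/(n-2)}(r-|x|)$ itself is $O(|x|^{-\gamma})$, not $O(|x|^{1-\gamma})$.
\end{enumerate}
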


\begin{proof}
Set $\tau(x):=-\ln |x|^2$.
After decreasing the radius, we may assume \(\tau>1\) in the punctured ball. The assumption is equivalent to
\begin{equation}\label{eq:uqminus1-small}
    u(x)^{2/(n-2)}
    =
    o\left(|x|^{-2}\tau(x)^{-1}\right)
    \quad\text{as }x\to0.
\end{equation}
Choose $0<s<\min\left\{1,\frac{n-2}{2}\right\}$ and define
\begin{equation}\label{eq:weighted-u-def-removable}
    \widetilde u(x):=\tau(x)^{-s}u(x).
\end{equation}
A direct computation gives
\begin{equation}\label{eq:weighted-u-equation}
    L_s \widetilde u=Q_s(x)\widetilde u
\end{equation}
where
\begin{equation*}
    L_s
    :=
    \Delta
    -
    \left(
        \frac{4s}{|x|^2\tau(x)}x + \frac12x
    \right)\cdot\nabla
\end{equation*}
and
\begin{equation*}
    Q_s(x)
    :=
    \frac{2s(n-2)}{|x|^2\tau(x)}
    +
    \frac{4s(1-s)}{|x|^2\tau^2(x)}
    -
    \frac{s}{\tau(x)}
    +
    \frac{n-2}{2}
    -
    u(x)^{2/(n-2)}.
\end{equation*}
By \eqref{eq:uqminus1-small}, for some small \(r_0>0\) we have
\[
    u(x)^{2/(n-2)}
    \le
    \frac{s(n-2)}{2|x|^2\tau(x)},
    \quad 0<|x|<r_0.
\]
After decreasing \(r_0>0\) if necessary, $Q_s(x)\ge \frac{s(n-2)}{4|x|^2\tau(x)}>0$ for $0<|x|<r_0$. Thus, \eqref{eq:weighted-u-equation} gives
\begin{equation}\label{eq:Ls-weighted-u-positive}
    L_s \widetilde u\ge0
    \quad\text{in }B_{r_0}\setminus\{0\}.
\end{equation}
For \(0<r<r_0\), define
\begin{equation*}
    \Psi(r)
    :=
    \int_r^{r_0}
        \rho^{1-n}\tau(\rho)^{-2s}e^{\rho^2/4}\,d\rho.
\end{equation*}
Then a straightforward computation yields
\begin{equation}\label{eq:Ls-Psi-zero}
    L_s\Psi=0
    \quad\text{in }B_{r_0}\setminus\{0\}.
\end{equation}
Moreover,
\begin{equation}\label{eq:Psi-asymptotic}
    \Psi(r)
    \sim
    \frac{1}{n-2} r^{-(n-2)}\tau(r)^{-2s}
    \quad\text{as } r \to 0.
\end{equation}
By \eqref{eq:lzero-asymptotic} and \eqref{eq:weighted-u-def-removable}, we have
\[
    \widetilde u(x)
    =
    o\left(|x|^{-(n-2)}\tau(x)^{-(n-2)/2-s}\right)
    \quad\text{as }x\to0.
\]
Combining this with \eqref{eq:Psi-asymptotic}, we obtain
\begin{equation}\label{eq:weighted-u-over-Psi-zero}
    \frac{\widetilde u(x)}{\Psi(|x|)}
    =
    o\left(\tau(x)^{s-(n-2)/2}\right)
    \to 0
    \quad\text{as }x \to 0.
\end{equation}
Let $M:=\max\limits_{|x|=r_0}\widetilde u(x)$. Choose a sequence \(\varepsilon_j \to 0\).  By
\eqref{eq:weighted-u-over-Psi-zero}, for each \(j\) we can choose
$
    0<r_j<\min\left\{1/j,r_0\right\}
$
such that $ \widetilde u(x)\le \varepsilon_j\Psi(|x|) $ for $|x|=r_j$. Set
\[
    A_j:=\{x:r_j<|x|<r_0\}
    \quad \text{and} \quad
    H_j(x):=\widetilde u(x)-\varepsilon_j\Psi(|x|)-M.
\]
By \eqref{eq:Ls-weighted-u-positive} and \eqref{eq:Ls-Psi-zero}, we have
\[
    L_sH_j\ge0
    \quad\text{in } A_j.
\]
The definition of \(M\) and the choice of \(r_j\) give $ H_j\le0$ on $\partial A_j$. Then, the maximum principle yields
\begin{equation}\label{eq:76}
      \widetilde u(x)\le M+\varepsilon_j\Psi(|x|)
    \quad\text{for }r_j<|x|<r_0.
\end{equation}
Letting \(j\to\infty\)
in \eqref{eq:76} gives \(\widetilde u(x)\le M\).
Hence,
\begin{equation}\label{eq:u-log-growth}
    u(x)\le C\tau(x)^s
    \quad\text{near the origin}.
\end{equation}

Let \(x_0\ne0\) be sufficiently close to the origin and set $r:=|x_0|$. Define
\[
    U(y):=u\left(x_0+\frac r4 y\right),
    \quad |y|<1.
\]
Then \(U\) satisfies
\[
    -\Delta_y U
    +
    \frac r8
    \left(x_0+\frac r4 y\right)\cdot\nabla_y U
    +
    \frac{(n-2) r^2}{32}U
    =
    \frac{r^2}{16}U^{\frac{n}{n-2}}
    \quad\text{in }B_1.
\]
By the interior gradient estimates, we have $|\nabla_y U(0)| \le C(-\ln r)^s$. Returning to the original variables, we obtain
\begin{equation}\label{eq:xgrad-log-bound}
    |x||\nabla u(x)|
    \le C(-\ln |x|)^s \quad \text {near the origin}.
\end{equation}
Set $F(x):= \frac12x\cdot\nabla u+\frac{n-2}{2}u-u^{\frac{n}{n-2}}$. Then \eqref{eq:u-log-growth} and \eqref{eq:xgrad-log-bound} imply that $F\in L^p(B_{r_0})$ for every  $1\le p<\infty$. By Lemma \ref{lem:local-integrability-distribution} and the standard elliptic estimates, we obtain $u\in C_{\mathrm{loc}}^{2}(B_{r_0})$. This completes the proof of Lemma \ref{lem:zero-limit-removable}.
\end{proof}

\begin{proof}[Proof of Theorem \ref{thm:main2}]
It follows from Lemma~\ref{lem:cylindrical-limit} and Lemma~\ref{lem:zero-limit-removable}.
\end{proof}

\section{Proof of Theorem~\ref{thm:supercritical-alternative}}\label{sec:supercritical}
In this section, we apply Simon's convergence theorem to prove Theorem \ref{thm:supercritical-alternative}. Specifically, we shall use a slightly reformulated
scalar form of Simon \cite[Part II, Theorem 1.2]{simon1985isolated}, adapted to the present setting (see also Simon \cite[Theorem 1]{simon1983asymptotics}). Although the theorem is stated under a global uniform analyticity condition, its proof only requires this condition in a neighborhood of the limiting stationary point and of the trajectory. Therefore, it suffices to assume local uniform analyticity.

Since \(q>\frac{n+2}{n-2}\) in the present case, the coefficients in \eqref{eq:cylindrical-coefficient} satisfy \(k_0>0\) and \(k_1<0\). If
\begin{equation}\label{eq:supercritical-zero-limit}
    \lim_{|x|\to0}|x|^{\frac{2}{q-1}}u(x)=0,
\end{equation}
then Lemma \ref{lem:zero-removable} shows that $u$ can be extended as a $C^2(B_1)$ solution of \eqref{eq:main} in $B_1$. Thus we only need to consider the case
\begin{equation}\label{eq:supercritical-nonzero-branch}
   \limsup_{|x|\to 0} |x|^{\frac{2}{q-1}}u(x)>0.
\end{equation}

\begin{lemma}\label{lem:supercritical-singular-convergence}
Let $q>\frac{n+2}{n-2}$ and $u\in C^2(B_1\setminus\{0\})$ be a nonnegative solution of
\eqref{eq:main}. Assume that \eqref{eq:supercritical-scale-bound} and \eqref{eq:supercritical-nonzero-branch} hold. Then there exists a
positive function \(\omega\in C^\infty(\mathbb S^{n-1})\) satisfying
\eqref{eq:supercritical-spherical-profile} such that
\[
  \lim_{r\to 0}  r^{\frac{2}{q-1}}u(r,\cdot) = \omega(\cdot)
    \quad\text{in }C^k(\mathbb S^{n-1})
\]
for every integer \(k\ge0\).
\end{lemma}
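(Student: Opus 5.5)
We work in the cylindrical variables \eqref{eq:cylindrical-coordinates}--\eqref{eq:cylindrical-rescaling}. By \eqref{eq:supercritical-scale-bound}, $w(t,\theta)=r^{\frac{2}{q-1}}u(r\theta)$ is bounded on $(-\infty,T_0]\times\mathbb S^{n-1}$. Since $k_1<0\neq 0$, Lemmas \ref{lem:cylindrical-energy-limit} and \ref{lem:constant-energy-stationarity} apply. Thus the energy $E(t;w)$ is monotone (nondecreasing for $t$ very negative) with finite limit $\bar E$. Moreover, every sequence $s_j\to-\infty$ has a subsequence along which $w(\cdot+s_j,\cdot)\to W$ in $C^2_{\rm loc}$, where $W\ge0$ solves \eqref{eq:limit}, which is exactly \eqref{eq:supercritical-spherical-profile}.

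Interior Schauder estimates bootstrapped on unit cylinders $(\tau-1,\tau+1)\times\mathbb S^{n-1}$ give uniform $C^k$ bounds for $w$ for every $k$. Hence the $\omega$-limit set $\Omega$ of the trajectory $\{w(t,\cdot)\}_{t\to-\infty}$ is a nonempty, compact, connected subset of $C^k(\mathbb S^{n-1})$ consisting of stationary solutions. The energy identity gives
\[
\int_{-\infty}^{T}\|w_t\|_{L^2}^2\,dt\le C .
\]
The core step is to show that $\Omega$ is a single point. For this we apply Simon's theorem \cite{simon1985isolated} to the analytic functional
\[
\mathcal F(\phi)=\int_{\mathbb S^{n-1}}\Bigl(\tfrac12|\nabla\phi|^2+\tfrac{k_0}{2}\phi^2-\tfrac{1}{q+1}|\phi|^{q+1}\Bigr)d\theta .
\]
Analyticity holds near any limit point $W$ that is bounded below by a positive constant; otherwise we use local analyticity along the trajectory, as the paper indicates. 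We reverse time, $\tau=-t$, so that the equation reads
\[
w_{\tau\tau}+\bigl(|k_1|-\tfrac12 e^{-2\tau}\bigr)w_\tau-\mathcal M(w)=0,\qquad \mathcal M=\nabla\mathcal F .
\]
This is a damped second-order gradient-like system with an exponentially decaying perturbation $\tfrac12 e^{-2\tau}w_\tau$, and it fits the Part II framework with error $O(e^{-2\tau})$. Concretely, we use the Łojasiewicz--Simon inequality
\[
|\mathcal F(\phi)-\mathcal F(W)|^{1-\vartheta}\le C\|\mathcal M(\phi)\|_{L^2}
\]
near $W\in\Omega$. Differentiating
\[
H(\tau)=\bigl(\mathcal E(\tau)-\mathcal F(W)+Ce^{-2\tau}\bigr)^{\vartheta}, \qquad \mathcal E=-E+\text{(kinetic part)},
\]
chosen monotone, yields
\[
-H'\ge c\bigl(\|w_\tau\|+\|\mathcal M(w)\|\bigr)-Ce^{-c\tau}.
\]
This gives $\int^\infty\|w_\tau\|_{L^2}\,d\tau<\infty$, so $w(t,\cdot)$ converges in $L^2$ to a unique $\omega$. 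By the uniform $C^k$ bounds and interpolation, the convergence holds in every $C^k$.

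It remains to show $\omega>0$. By \eqref{eq:supercritical-nonzero-branch} and the uniqueness of the limit, $\omega\not\equiv0$. Indeed, if $\omega\equiv0$ then $w\to0$ uniformly, contradicting the positive $\limsup$. Since $\omega\ge0$ solves $-\Delta\omega+k_0\omega=\omega^q$, the strong maximum principle gives $\omega>0$, and elliptic regularity gives $\omega\in C^\infty$. Positivity also legitimizes the analyticity of $\phi\mapsto|\phi|^{q+1}$ near $\omega$ (used a posteriori, by taking $W\in\Omega$ with $W>0$).

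The main obstacle is this Łojasiewicz--Simon step. We must (i) verify that the perturbation terms $e^{-2\tau}w_\tau$ are integrable enough to be absorbed, and (ii) handle analyticity when some $W\in\Omega$ might vanish somewhere. For (ii), the strong maximum principle shows every nontrivial $W\in\Omega$ is positive. Moreover, $0$ and nonzero elements cannot both lie in $\Omega$: $\Omega$ is connected, and $0$ is isolated among stationary solutions because $k_0>0$, since small solutions satisfy $\|W\|\le C\|W\|^q$. So either $\Omega=\{0\}$, which is excluded, or all elements of $\Omega$ are positive.
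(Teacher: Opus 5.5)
Your proposal is correct and uses the same engine as the paper: Simon's convergence theorem, applied once positivity of the limit set is secured. Two steps are organized differently.

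\textbf{Positivity of the limit set.} The paper uses the energy level. Every subsequential limit $W$ satisfies $E(t;W)=\bar E=(\frac12-\frac1{q+1})\int_{\mathbb S^{n-1}}W^{q+1}\,d\theta$. A single vanishing limit would therefore force $\bar E=0$, so all limits vanish and $w\to0$ uniformly, contradicting \eqref{eq:supercritical-nonzero-branch}. The Harnack inequality then gives $w\ge\delta$ on the whole tail.

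You instead combine connectedness of the $\omega$-limit set with the isolation of $0$ among nonnegative solutions of \eqref{eq:supercritical-spherical-profile}. Evaluating the equation at a maximum point gives $\max W\ge k_0^{1/(q-1)}$ whenever $W\not\equiv0$. This argument is equally valid; the paper's is shorter and produces the lower bound directly. You should state the consequence explicitly: compactness of the limit set gives $w\ge\delta$ for $t$ very negative. That bound is what makes $z^{q+1}$ analytic along the trajectory. It is also what justifies $C^k$ bounds for every $k$: for non-integer $q$, your bootstrap before positivity only yields finitely many derivatives. That is enough for compactness of the limit set, but not for the final $C^k$ convergence.

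\textbf{Fitting into Simon's theorem.} The paper puts the drift into the weight $e^{-(n-2-\frac{4}{q-1})s}\rho(s)$ of a space-time functional whose Euler--Lagrange equation is exactly the reversed equation. The drift then appears as the integrand perturbation $(\rho-1)F_0=O(e^{-2s})$ required by Part II, Theorem 1.2, so the theorem can be quoted directly. You keep the sphere functional $\mathcal F$ and treat $\frac12e^{-2\tau}w_\tau$ as an error small relative to $\|w_\tau\|$. This is more transparent, but it means you are re-running the Łojasiewicz--Simon argument rather than quoting the theorem.

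In that case your differential inequality for $H$ is only schematic. The derivative $\frac{d}{d\tau}E$ controls $\|w_\tau\|_{L^2}^2$, but not $\|\mathcal M(w)\|_{L^2}$ pointwise in $\tau$. You need Simon's device: bound $\|\mathcal M(w)\|$ on $[\tau,\tau+1]$ by $\|w_\tau\|_{L^2}$ on a neighboring interval, using elliptic estimates for the $\tau$-differentiated equation.

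\textbf{Minor slip.} With $\tau=-t$ the equation reads $w_{\tau\tau}-(|k_1|-\frac12e^{-2\tau})w_\tau-\mathcal M(w)=0$, not with a plus sign. This is harmless. Since $\frac{dE}{d\tau}=(k_1+\frac12e^{-2\tau})\|w_\tau\|_{L^2}^2\le0$ for large $\tau$, $E$ itself decreases to $\bar E$, so the $Ce^{-2\tau}$ correction in $H$ is not even needed.
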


\begin{proof}
Let $w$ be defined as in \eqref{eq:cylindrical-rescaling}. By \eqref{eq:supercritical-scale-bound}, \(w\) satisfies the hypothesis \eqref{eq:bdd} of Lemma~\ref{lem:cylindrical-energy-limit}. Therefore, the
limit $\bar E$ exists and is finite.
Let \(s_j\to-\infty\) be arbitrary.
By Lemma~\ref{lem:constant-energy-stationarity}, after passing
to a subsequence, there exists a nonnegative function
\(W\in C^2(\mathbb S^{n-1})\) satisfying
\eqref{eq:limit} such that
\begin{equation*}
    w(t+s_j,\theta)\to W(\theta)
    \quad\text{in }C^2_{\mathrm{loc}}
    (\mathbb R\times\mathbb S^{n-1}),
\end{equation*}
and for every fixed
\(t\in\mathbb R\),
\[
    E(t+s_j;w) \to E(t;W) \equiv \bar E.
\]
Plugging \eqref{eq:cylindrical-coefficient} into \eqref{eq:limit}, we get that $W$ satisfies \eqref{eq:supercritical-spherical-profile}.  Multiplying \eqref{eq:supercritical-spherical-profile} by \(W\)
and integrating over \(\mathbb S^{n-1}\), we obtain
\begin{equation*}
    \bar E=E(t;W)
    =
    \left(\frac12-\frac{1}{q+1}\right)
    \int_{\mathbb S^{n-1}}W^{q+1}\,d\theta\ge0.
\end{equation*}
Hence, \(\bar E=0\) if and only if \(W\equiv0\). If \(\bar E=0\), then \eqref{eq:supercritical-zero-limit} holds, contradicting \eqref{eq:supercritical-nonzero-branch}. Consequently, \(\bar E>0\). We claim that there exist
$\delta>0$ and $T_1<0$ such that
\begin{equation}\label{eq:energy-method-tail-positive}
    w(t,\theta)\ge\delta
    \qquad
    \text{for all }t\le T_1  \text{ and } \theta\in\mathbb S^{n-1}.
\end{equation}
Suppose otherwise. Then there exist $t_j\to-\infty$ and
$\theta_j\in\mathbb S^{n-1}$ such that
$
    w(t_j,\theta_j) \to 0.
$
After passing to a subsequence, $\theta_j\to\theta_0$.  Applying
Lemma~\ref{lem:constant-energy-stationarity} to the sequence
$\{t_j\}$, and passing to a further subsequence, we obtain
\[
    w(t+t_j,\theta)\to W(\theta)
    \quad\text{in }C^2_{\mathrm{loc}}
    (\mathbb R\times\mathbb S^{n-1}),
\]
where $W\ge 0$ satisfies \eqref{eq:limit} and $W(\theta_0)=0$.  By the Harnack inequality, $W\equiv0$.  Hence $\bar E=0$, a contradiction. This proves
\eqref{eq:energy-method-tail-positive}.

By \eqref{eq:supercritical-scale-bound}, after decreasing $T_1$ if
necessary, there exists $M_*>0$ such that
\begin{equation*}
    \delta\le w(t,\theta)\le M_*
    \qquad
    \text{for all }t\le T_1 \text{ and }  \theta\in\mathbb S^{n-1}.
\end{equation*}
Define
\[
    V(s,\theta):=w(T_1-s,\theta),
    \quad s\ge0.
\]
Then $s\to+\infty$ corresponds to $t\to-\infty$, and
\eqref{eq:general-cylinder-equation} becomes
\begin{equation}\label{eq:energy-method-reversed-equation}
\begin{aligned}
    &V_{ss}
    -\left(n-2-\frac{4}{q-1}\right)V_s
    +\Delta_{\mathbb S^{n-1}} V
    -\frac{2}{q-1}
       \left(n-2-\frac{2}{q-1}\right)V \\
    &\qquad
    +V^q
    +ce^{-2s}V_s=0
    \quad \text{in }(0,\infty)\times\mathbb S^{n-1},
\end{aligned}
\end{equation}
where $c:=\frac12e^{2T_1}>0$. Moreover,
\begin{equation}\label{eq:energy-method-V-two-sided}
    \delta\le V(s,\theta)\le M_*
   \qquad
    \text{for all }s\ge0 \text{ and }   \theta\in\mathbb S^{n-1}.
\end{equation}
Set
\[
    F_0(z,p,\tau)
    :=
    \frac12\tau^2
    +\frac12|p|^2
    +\frac{1}{q-1}
       \left(n-2-\frac{2}{q-1}\right)z^2
    -\frac{1}{q+1}z^{q+1}
\]
and
\[
    \rho(s):=\exp\!\left(-\frac c2e^{-2s}\right).
\]
For $0<A<B<\infty$, define
\[
    \mathcal G_{A,B}(\psi)
    :=
    \int_A^B\int_{\mathbb S^{n-1}}
    e^{-\left(n-2-\frac{4}{q-1}\right)s}
    \rho(s)
    F_0(\psi,\nabla_\theta\psi,\psi_s)
    \,d\theta\,ds.
\]
For any \(\eta\in C_c^\infty((A,B)\times\mathbb S^{n-1})\), integrating by parts with respect to \(s\) yields
\[
\begin{aligned}
    \left.
    \frac{d}{d\varepsilon}
    \mathcal G_{A,B}(V+\varepsilon\eta)
    \right|_{\varepsilon=0}
    &= \int_A^B\int_{\mathbb S^{n-1}}
    e^{-\left(n-2-\frac{4}{q-1}\right)s}\rho(s)        \\
    &\qquad\times
    \left[
        -V_{ss}
        +
        \left(
            n-2-\frac{4}{q-1}
            -
            \frac{\rho'}{\rho}
        \right)V_s
        -
        \Delta_{\mathbb S^{n-1}} V \right.   \\
       &\qquad\qquad \left.+
        \frac{2}{q-1}
        \left(
            n-2-\frac{2}{q-1}
        \right)V
        -
        V^q
    \right]\eta
    \,d\theta\,d s.
\end{aligned}
\]
Hence, the Euler-Lagrange equation of this functional is
exactly \eqref{eq:energy-method-reversed-equation}. Write
\[
    \rho(s)F_0=F_0+E
    ~~ \text{ with }
    E(s,z,p,\tau):=(\rho(s)-1)F_0(z,p,\tau).
\]
Then, on every compact subset of the relevant range, we have
\[
    |E|+|DE|+|D^2E|
    \le Ce^{-2s}.
\]
The functions $z^q$ and $z^{q+1}$ are real analytic in a neighborhood of the closure of the range of $V$ given in \eqref{eq:energy-method-V-two-sided}. Furthermore,
\[
    \tau\partial_\tau F_0=\tau^2>0 ~ (\tau\ne0),
    \quad
    \partial_{\tau\tau}^2F_0=1
    \quad \text{and} \quad
    \partial_{p_ip_j}^2F_0=g^{ij}.
\]
Applying standard elliptic estimates to \eqref{eq:energy-method-reversed-equation} yields
\[
    \sup_{s\ge1}
    \|V\|_{C^k([s,s+1]\times\mathbb S^{n-1})}<\infty
    \quad \text{for every }k\ge0.
\]
Thus, by Simon's convergence theorem, there exists a function
$\omega\in C^2(\mathbb S^{n-1})$ such that
\[
    V(s,\cdot)\to \omega(\cdot)
    \quad \text{in }C^2(\mathbb S^{n-1})
\]
and
\[
    V_s(s,\cdot)\to0
    \quad \text{in }C^1(\mathbb S^{n-1})
\]
as $s\to +\infty$. Moreover, the limit $\omega$ is positive and satisfies
\eqref{eq:supercritical-spherical-profile}. Furthermore, elliptic regularity implies that
$\omega\in C^\infty(\mathbb S^{n-1})$.

Finally, the higher-order estimates imply that
$\{V(s,\cdot): s\ge1\}$ is precompact in every $C^k$ topology. Since its
$C^2$ limit is uniquely $\omega$, the convergence actually holds in $C^k(\mathbb S^{n-1})$ for all $k\ge0$. Reverting to the original variable $r=e^{T_1-s}$ completes the proof.
\end{proof}

\begin{proof}[Proof of Theorem \ref{thm:supercritical-alternative}]
It follows from Lemmas \ref{lem:zero-removable}  and \ref{lem:supercritical-singular-convergence}.
\end{proof}

\medskip

\noindent{\bf Acknowledgements} Part of this work was completed while H. Yang was visiting The Chinese University of Hong Kong. He would like to thank Professor Juncheng Wei for his support and encouragement, as well as the Department of Mathematics and the Institute of Mathematical Sciences for their hospitality. The research of M. Xu is partially supported by the National Key R\&D Program of China 2025YFA1017600 and NSFC 12526202. The research of H. Yang is supported by NSFC 12301140 and the Shanghai Frontier Science Center of Modern Analysis.

\medskip

\noindent{\bf Data availability} No data were used in this study.

\medskip

\noindent{\bf Conflict of interest} There is no conflict of interest.



\bibliographystyle{amsplain}
\bibliography{refs}

\noindent M. Xu

\noindent  School of Mathematical Sciences, Fudan University\\
Shanghai 200433, China \\[1mm]
Email:  \textsf{meiqing\_xu@fudan.edu.cn}

\bigskip

\noindent   H. Yang

\noindent  School of Mathematical Sciences, Shanghai Jiao Tong University\\
Shanghai 200240, China \\[1mm]
Email:  \textsf{hui-yang@sjtu.edu.cn}

\end{document}